\newcommand\edgelabel{\mathfrak e}
\newcommand\MN{\mathtt{MN}}
\newcommand\ME{\mathtt{ME}}
\newcommand\MAX{\mathtt{MAX}}
\newcommand\gray[1]{{\color{gray}#1}}
\newcommand\DEF[1]{\textbf{\textup{#1}}}
\newcommand{\quot}[1]{``#1''}
\newcommand{\algfont}{\fontfamily{zi4}\selectfont}
\newcommand{\perm}[1]{%
  \textcolor{black}{%
  \mathtt{%
  [%
    \StrLen{#1}[\stringLength]
    \foreach \n in {1,...,\stringLength}{
      \StrChar{#1}{\n}
      \ifnum\n<\stringLength
      \,
      \fi
    }%
  ]%
  }%
}%
}
\renewcommand\cref[1]{\FAIL} 
\newcommand\id{\operatorname{id}}
\newcommand\proj{\operatorname{proj}}
\newcommand{\e}{\textup{\textbf{e}}}
\newcommand{\Q}{\mathbb{Q}}
\newcommand{\NN}{\mathbb{N}}
\newcommand{\NE}{\ensuremath{\mathsf{NE}}}
\newcommand{\NW}{\ensuremath{\mathsf{NW}}}
\newcommand{\SE}{\ensuremath{\mathsf{SE}}}
\newcommand{\SW}{\ensuremath{\mathsf{SW}}}
\newcommand\N{\ensuremath{\mathsf{N}}\xspace}
\renewcommand\S{\ensuremath{\mathsf{S}}\xspace}
\newcommand\West{\ensuremath{\mathsf{West}}\xspace}
\newcommand\South{\ensuremath{\mathsf{South}}\xspace}
\newcommand{\TypeA}{\ensuremath{\mathsf{Type\_A}}\xspace}
\newcommand{\TypeB}{\ensuremath{\mathsf{Type\_B}}\xspace}
\newcommand{\TypeAnotB}{\ensuremath{\mathsf{Type\_A\_not\_B}}\xspace}
\newcommand{\TypeBnotA}{\ensuremath{\mathsf{Type\_B\_not\_A}}\xspace}
\newcommand{\TypenotAnotB}{\ensuremath{\mathsf{Type\_not\_A\_not\_B}}\xspace}
\renewcommand\O{{\mathcal O}}
\newcommand{\vertexset}{\textup{V}}
\newcommand{\edgeset}{\textup{E}}
\newcommand{\permutations}{\ensuremath{\mathfrak{S}}}
\newcommand{\CT}{\ensuremath{\mathsf{ct}}}
\newcommand{\Tsn}{\ensuremath{\mathsf{T_{SN}}}}
\newcommand{\CornerTrees}{\ensuremath{\mathsf{CornerTrees}}}
\newcommand{\CornerTreesfivethree}{\ensuremath{\mathsf{CornerTrees_{\;5/3}}}}
\newcommand{\SNPolyTrees}{\ensuremath{\mathsf{SNpoly}}}
\newcommand{\doubleposets}{\ensuremath{\DP}}
\newcommand{\Profile}{\ensuremath{\mathsf{Profile}}}
\newcommand{\treedoubleposets}{\ensuremath{\mathsf{TreeDP}}}
\newcommand{\twindoubleposets}{\ensuremath{\mathsf{TwinDP}}}
\newcommand{\twintreedoubleposets}{\ensuremath{\mathsf{TwinTreeDP}}}
\newcommand{\PC}{\ensuremath{\mathsf{PC}}}
\newcommand{\Poset}{\ensuremath{\mathsf{PoSet}}}
\newcommand{\DPoset}{\ensuremath{\mathsf{DPoSet}}}
\newcommand{\FinSet}{\ensuremath{\mathsf{FinSet}}}
\newcommand\arr[1]{\xrightarrow{\ #1\ }}
\newcommand{\Cat}{\ensuremath{\mathcal{C}}}
\definecolor{todocolor}{HTML}{fa73ff}
\newcommand{\Rev}{\textbf{\textup{Rev}}}
\newcommand{\Comp}{\textbf{\textup{Comp}}}
\newcommand{\Swap}{\textbf{\textup{Swap}}}
\newcommand{\Hom}{\ensuremath{\mathsf{Hom}}}
\renewcommand\hom{\ensuremath{\mathsf{hom}}}
\newcommand{\Mono}{\ensuremath{\mathsf{Mono}}}
\newcommand{\Epi}{\ensuremath{\mathsf{Epi}}}
\newcommand{\RegMono}{\ensuremath{\mathsf{RegMono}}}
\newcommand{\regmono}{\ensuremath{\mathsf{regmono}}}
\newcommand{\Aut}{\ensuremath{\mathsf{Aut}}}
\newcommand{\Digraphs}{\ensuremath{\mathsf{Digraphs}}}
\newcommand{\DAG}{\ensuremath{\mathsf{DAG}}}
\newcommand\partition{\mathsf{P}}
\newcommand\graph{\textbf{\textup{\fontfamily{qhv}\selectfont\scalebox{0.8}{g}}}}
\newcommand\grapho{\textbf{\textup{\fontfamily{qhv}\selectfont\scalebox{0.8}{g}}}}
\newcommand\graphh{\textbf{\textup{\fontfamily{qhv}\selectfont\scalebox{0.8}{h}}}}
\newcommand\smalldpprime{{\mathtt{d}^{\prime}}}
\newcommand\smalldp{{\mathtt{d}}}
\newcommand\largedp{{\mathtt{D}}}
\newcommand\smallct{{\mathtt{tt}}}
\newcommand\arboNE{\ensuremath{\mathsf{t_{\,5/3}}}}
\newcommand{\DiGraphs}{\ensuremath{\mathsf{DiGraphs}}}
\newcommand\arboNEone{\ensuremath{\mathsf{t^1_{\,5/3}}}}
\newcommand\arboNEtwo{\ensuremath{\mathsf{t^2_{\,5/3}}}}
\newcommand\arboNEthree{\ensuremath{\mathsf{t^3_{\,5/3}}}}
\newcommand\arboNEprime{\ensuremath{\mathsf{t^{\prime}_{\,5/3}}}}
\newcommand\ArboNE{\ensuremath{\mathsf{Tree_{\,5/3}}}}
\newcommand\smalltwin{{\mathtt{twin}}}
\newcommand\smalltree{{\mathtt{t}}}
\newcommand\triplesmalldp{(A,P_{A},Q_{A})}
\newcommand\triplesmalldpprime{(B,P_{B},Q_{B})}
\newcommand\intervalpartI{\mathsf{I}}
\newcommand\intervalpartJ{\mathsf{J}}
\newtheorem{theorem}{Theorem}[section]
\newtheorem{lemma}[theorem]{Lemma}
\newtheorem{proposition}[theorem]{Proposition}
\newtheorem{definition}[theorem]{Definition}
\newtheorem{example}[theorem]{Example}
\newtheorem{remark}[theorem]{Remark}
\newtheorem*{theorem*}{Theorem}
\newcommand\CTtoSNpoly{\Psi_{\mathsf{SN} \leftarrow\mathsf{CT}}}
\newcommand\SNpolytoSDP{\Psi_{\mathsf{TTDP}\leftarrow \mathsf{SN}}}
\newcommand\SNpolytoCT{\Psi_{\mathsf{CT} \leftarrow\mathsf{SN}}}
\newcommand\TTDPtoCT{\Psi_{\mathsf{CT} \leftarrow\mathsf{TTDP}}}
\newcommand\SDPtoSNpoly{\Psi_{\mathsf{SN}\leftarrow \mathsf{TTDP}}}
\newcommand\CTtoDP{\Psi_{\mathsf{TTDP}\leftarrow \mathsf{CT}}}
\newcommand\SntoDP{\Psi_{\mathcal{S}\leftarrow\permutations}}
\newcommand\DPtoSn{\Psi_{\permutations\leftarrow\mathcal{S}}}
\newcommand\DP{\mathsf{DP}}
\newcommand{\TransClos}{\ensuremath{\mathsf{Tr}}}
\newcommand{\TransRed}{\ensuremath{\mathsf{TrRd}}}
\tikzset{mid arrow/.style={
    postaction={decorate,decoration={
        markings,
        mark=at position .5 with {\arrow[line width=2mm, scale=5]{stealth}}
    }}
}}
\title{Efficient counting of permutation patterns via double posets}
\author{Joscha Diehl, Emanuele Verri}
\begin{document}

\maketitle

\begin{abstract}
Corner trees, introduced in ``Even-Zohar and Leng, 2021, Proceedings of the 2021 ACM-SIAM Symposium on Discrete Algorithms'', allow for the efficient counting
of certain permutation patterns.

Here we identify corner trees
as a subset of 
finite (strict) double posets,
which we term \emph{twin-tree double posets}.
They are contained in both \emph{twin double posets}
and \emph{tree double posets}, giving candidate sets for generalizations of corner tree countings.
We provide the generalization of an algorithm proposed by Even-Zohar/Leng to a class of tree double posets,
thereby enlarging the space of permutations
that can be counted in $\tilde{\mathcal O}(n^{5/3})$.
\end{abstract}

\tableofcontents

\section*{Introduction}
\addcontentsline{toc}{section}{Introduction}

\emph{Corner trees}, introduced by \cite{even2021counting}, provide a powerful framework for counting occurrences of permutation patterns in near-linear time $\tilde{\mathcal O}(n)$\footnote{It is linear up to a polylogarithmic factor, i.e. $\tilde{\mathcal O}(f(n)) := \mathcal{O}(f(n)\,\log^{c}(n))$ where $c$ is some constant.}. A corner tree always counts a fixed linear combination of pattern counts. Notably, using corner trees with at most three vertices, one can compute the full $3$-profile of a permutation $\Pi \in \permutations$—that is, the tuple
\begin{align*}
    (\#\perm{123}(\Pi),\#\perm{132}(\Pi),\#\perm{213}(\Pi),\#\perm{231}(\Pi),\#\perm{312}(\Pi),\#\perm{321}(\Pi))
\end{align*}
in $\tilde{\mathcal O}(n)$ time, offering a significant improvement over the naive $\mathcal{O}(n^{3})$ algorithm.

At level four, the subspace of 4-patterns counted by corner trees (with at most four vertices) is only $23$-dimensional, missing one direction. To tackle this problem, \cite{even2021counting} proposes a specific algorithm that counts the pattern $\perm{3214}$ in $\tilde{\mathcal{O}}(n^{5/3})$ time. Since the pattern $\perm{3214}$ does not lie in the span of corner trees, then the full $4$-profile can also be computed in $\tilde{\mathcal{O}}(n^{5/3})$ time. 

At higher levels, corner trees fail to span even more directions. Furthermore, even if corner trees always yielded linearly independent vectors, the number of isomorphism classes with $n$ vertices grows asymptotically slower than $n!$. See the entry \cite{OEIS_A052763} of the on-line encyclopedia of the integer sequences.

In this work, we show that counting occurrences of permutation patterns can be naturally framed as counting occurrences of strict double posets. Namely, we will show that double poset occurrences on permutations can always be translated to a linear combination of pattern counts. In particular, we show that both corner trees and permutations can be encoded as certain families of double posets. As a concrete application of our method, we generalize the algorithm from \cite{even2021counting} that counts $\perm{3214}$ to a certain family of double posets. This family allows us to add twelve new directions at level $5$ computable in $\tilde{\mathcal{O}}(n^{5/3})$ time.

\section*{Overview}

This work is structured as follows.
\begin{enumerate}
\item In \Cref{section:permutation_corners} we introduce corner trees and their occurrences on permutations.
    \item In \Cref{section:double_posets}, we show how derooting corner trees yields a certain class of (strict) double poset, that we name \textit{twin tree double poset}. We show how permutations can be naturally encoded as pairs of strict linear orders.  This leads to corner tree occurrences being seen as strict double order preserving maps.
    \item In \Cref{section:count_perm_usingDP}, we show how all double posets count permutation patterns. Counting occurrences of corner trees is, therefore, just an instance of a more general phenomenon.  
      \item In \Cref{section:gen_algo}, we introduce a family of tree double posets, whose occurrences on permutations, can be counted in $\tilde{\mathcal{O}}(n^{5/3})$ time. This generalizes an algorithm that counts the occurrences of $\perm{3214}$ introduced in \cite{even2021counting}. We show how the elements of this family at level five allow us to count twelve more directions to the space of permutations counted by corner trees with at most five nodes.
      \item In \Cref{section:suppl}, we characterize regular monomorphisms and epimorphisms in the category of strict double posets \DPoset. We also show that the category \DPoset\, admits a (\Epi,\RegMono)-factorization. The latter factorization allows us to define the linear endomorphism that translates occurrences of double posets on permutations into linear pattern counts.
\end{enumerate}

\subsection*{Contributions}

The contributions of this work can be summarized as follows.

\begin{enumerate}
    \item We show that corner trees and permutations can be represented as specific kinds of strict double posets. In particular, unrooted corner trees are in one-to-one correspondence with \textit{twin tree double posets}. These are double posets where the two underlying Hasse diagrams, when viewed as labeled undirected graphs, are identical (hence the term \quot{twin}) and also connected and acyclic (hence the term \quot{tree}). On the other hand, permutations are encoded as pairs of strict linear orders. This allows us to interpret corner tree occurrences as double-order preserving maps.
    \item We show that the category of strict double posets \DPoset\, admits an (\Epi,\RegMono)-factorization. This allows us to define a linear endomorphism that translates occurrences of double posets on permutations to linear combinations of patterns. This construction resembles certain maps in graph theory, originally introduced by Lovàsz (\cite{lovasz2012large}), which were later interpreted as linear endomorphisms in \cite{Caudillo2025}. 
    
    \item Within this framework, we introduce a new family of tree double posets, denoted $\ArboNE$, whose occurrences in permutations can be counted in $\tilde{\mathcal{O}}(n^{5/3})$ time. To achieve this, we generalize an algorithm of \cite{even2021counting} that counts occurrences of the pattern $\perm{3214}$. In addition, we show that the $\ArboNE$ family allows us to count twelve new directions at level five that are not spanned by corner trees.
\end{enumerate}

\section{Corner trees: counting linear combinations of permutation patterns}
\label{section:permutation_corners}
In the original work by \cite{even2021counting}, corner trees are finite rooted trees whose vertices, except for the root, are labeled with the four directions $\NE,\NW,\SE$, and $\SW$.
Here we use an equivalent formulation and
instead label the edges with the four directions.%
\footnote{This, for example, makes the formulation of the algorithms
more transparent to us (see \Cref{algorithm_occurrences}).}

\begin{definition}
 \label{definition:ct}   
 A \DEF{corner tree}, for us, is a rooted tree $\CT:=(\vertexset(\CT),\edgeset(\CT))$ equipped with a labeling of the edges by four cardinal directions, i.e. a map
$\edgelabel: \edgeset(\CT) \to \{\NE,\NW,\SE,\SW\}.$
\end{definition}

Here we recall the definition presented in \cite{even2021counting} which motivates the labels assigned to the edges.

\begin{definition}
\label{definition:ct_occ}
  An \DEF{occurrence} of a corner tree $\CT$ in a permutation of size $n$, $\Pi \in \permutations(n)$, is  a mapping $f:\vertexset(\CT) \to [n]$ such that for all edges $e \in \edgeset(\CT)$,
  given that $e=(v,v')$ (where $v'$ is the child of $v$),
  the label of $e$ determines the allowed order in the image of $f$ as follows
\begin{center}
  \begin{tabular}{c|| c c c c} 
    \diagbox[innerwidth=3cm]{\textnormal{condition}}{$\mathfrak e(e)$} & $\NE$ & $\NW$ & $\SE$ & $\SW$\\
    \hline
    $f(v') < f(v)$ & $\times$ & $\checkmark$ & $\times$ & $\checkmark$\\
    $f(v') > f(v)$ & $\checkmark$ & $\times$ & $\checkmark$ & $\times$\\
    $\Pi(f(v')) < \Pi(f(v))$ & $\times$ & $\times$ & $\checkmark$ & $\checkmark$\\
    $\Pi(f(v')) > \Pi(f(v))$ & $\checkmark$ & $\checkmark$ & $\times$ & $\times$\\
  \end{tabular}
\end{center}
See \Cref{example:permutation}.
\end{definition}
 Corner trees count permutation patterns.
\begin{definition}
\label{definition:perm_linear_func}
Consider the free $\Q$-vector space on permutations, $\Q[\permutations]:=\bigoplus_{n}\Q[\permutations(n)]$, and fix a ``large'' permutation $\Pi \in \permutations(n)$. Define on basis elements $\sigma \in \permutations$ the linear functional
\begin{align*}
    \langle \PC(\Pi),\sigma \rangle := \#\sigma(\Pi):= |\{A \subseteq [n]\mid\textup{std}(\Pi|_{A})=\sigma\}|
\end{align*}
where $[n]\supseteq A=\{a_{1},\dots ,a_{\ell}\}$ with $a_1<\cdots <a_{\ell}$, $\Pi|_{A}:=[\Pi(a_{1}),...,\Pi(a_{\ell})]$, $\textup{std}(\Pi|_{A}):=[f(a_{1}),\dots,f(a_{m})]$ where $f: A \to |A|$ is the unique strict order preserving bijection with the usual orders. As an example, we have $\textup{std}([\mathtt{1\,5\,3\,2\,4}]|_{\{2,3,5\}}) = \textup{std}([\mathtt{5\,3\,4}])=[\mathtt{3\,1\,2}]$. Then $\langle \PC(\Pi),\sigma \rangle$ corresponds to the number of times that $\sigma$ arises as a \DEF{permutation pattern} in $\Pi$. \end{definition}

Occurrences of corner trees count linear combinations of permutation patterns, as the following example hints at, and as will be shown in \Cref{proposition:countpermasperm}. See also \Cref{remark:ct_lc}.
\begin{example}
If we consider occurrences of the corner tree
\scalebox{0.4}{
    \begin{forest}
      for tree={
        circle,
        draw,
        minimum size=0.5cm,
        edge={-},
        s sep=20mm,
        l sep=15mm,
      },
      [
      [, edge label={node[midway,above,sloped]{\NE}}
          ]
      ]
    \end{forest}
  } in a permutation $\Lambda$, these are exactly the
occurrences of the permutation $\perm{12}$ in $\Lambda$,
i.e. their number is equal to $\langle \PC(\Lambda),[\mathtt{1\,2}] \rangle$. While for the corner tree
\scalebox{0.4}{
    \begin{forest}
      for tree={
        circle,
        draw,
        minimum size=0.5cm,
        edge={-},
        s sep=20mm,
        l sep=15mm,
      },
      [[, edge label={node[midway,above,sloped]{\NE}}]
      [, edge label={node[midway,above,sloped]{\NE}}]
      ]
    \end{forest}
  }
the number of occurrences in $\Lambda$ is equal to 
\begin{align*}
    \langle \PC(\Lambda),[\mathtt{1\,2}] + 2\cdot [\mathtt{1\,2\,3}] + 2\cdot[\mathtt{1\,3\,2}] \rangle.
\end{align*}
We will see that for a fixed corner tree, counting its occurrences in a permutation always corresponds to a certain linear combination of permutation patterns. We will also see that these maps are order-preserving when we frame corner trees and permutations in the context of double posets.
\end{example}

\section{Corner trees and permutations as double posets}
\label{section:double_posets}
In this section, we first show that corner trees can be unrooted without loss of information. We name these graphs SN polytrees, \Cref{subsection:unrooting_corner_trees}. We then show that SN polytrees are pairs of Hasse diagrams of certain kinds of double posets. Since permutations can also be encoded as pairs of linear orders, we can encode occurrences of corner trees as double order-preserving maps.

\subsection{Unrooting corner trees}
\label{subsection:unrooting_corner_trees}
Let $\CornerTrees$ be the set of isomorphism classes of corner trees\footnote{The isomorphism between corner trees is the usual notion of isomorphism between rooted trees which respect the edge labels.}. Unrooted corner trees can be seen as polytrees\footnote{The terminology \textit{polytree} was first introduced in \cite{rebane1987recovery} to denote directed graphs whose underlying undirected graphs are trees.} endowed with a binary labeling on the edges. 

\begin{definition}
\label{defintion:SN_poly}
An \DEF{SN polytree} is a polytree whose edges are labeled either with \S 
  or \N. We denote with $\SNPolyTrees$, the set of isomorphism classes of SN polytrees\footnote{This is the isomorphism between directed graphs which respect SN labels.}.
\end{definition}

\begin{proposition}
\label{proposition:CT_SN_Poly}
Define the map $\CTtoSNpoly: \CornerTrees \to \SNPolyTrees$ as follows. Given a corner tree $\CT=(\vertexset(\CT),\edgeset(\CT))$, construct $\CTtoSNpoly(\CT)$ by 
\begin{itemize}
    \item preserving the vertex set $\vertexset(\CTtoSNpoly(\CT)):= \vertexset(\CT)$
    \item transforming the edges: each edge in $\edgeset(\CT)$ corresponds to a unique directed edge in $\CTtoSNpoly(\CT)$,
    \begin{itemize}
        \item directed westward
        \item labeled by the SN position of the target relative to the source
    \end{itemize}
\end{itemize}
Then, the map $\CTtoSNpoly$ is surjective, meaning that every SN polytree arises from some corner tree under this transformation.
\end{proposition}
\begin{remark}
\label{remark:rooting_SN_poly}
If $\Tsn \in \SNPolyTrees$ and $v \in \vertexset(\SNPolyTrees)$, then $\SNpolytoCT(\Tsn,v)$ denotes the corner tree obtained by picking $v$ as the root and relabeling the edges accordingly.
\end{remark}

\begin{figure}[H]
\centering
\begin{subfigure}[t]{0.4\textwidth}
\centering
\begin{align*}
\CTtoSNpoly \scalebox{2.5}{\text{$($}}\raisebox{-0.2cm}{\scalebox{0.26}{\begin{tikzpicture}[every node/.style={circle, draw, inner sep=4pt, minimum size=6mm}]
\node (X) at (0,0) {\phantom{\scalebox{2}{X}}};
\node (A) at (-2,-3) {\phantom{\scalebox{2}{X}}};
\node (B) at (2,-3) {\phantom{\scalebox{2}{X}}};
\tikzset{mid arrow/.style={
        postaction={decorate,decoration={
            markings,
            mark=at position .5 with {\arrow[scale=3]{stealth}}
        }}
    }}
        \draw[-] (X) -- (A) node[draw = none, pos=0.5, left, yshift=10pt] {\scalebox{2}{\NW}};
        \draw[-] (X) -- (B) node[draw = none, pos=0.5, right, yshift=10pt] {\scalebox{2}{\NW}};
\end{tikzpicture}}}\scalebox{2.5}{\text{$)$}} &=\phantom{\scalebox{2.5}{\text{$\{$}}}\raisebox{-0.2cm}{\scalebox{0.26}{\begin{tikzpicture}[every node/.style={circle, draw, inner sep=4pt, minimum size=6mm}]
\node (X) at (0,0) {\phantom{\scalebox{2}{X}}};
\node (A) at (-2,-3) {\phantom{\scalebox{2}{X}}};
\node (B) at (2,-3) {\phantom{\scalebox{2}{X}}};
\tikzset{mid arrow/.style={
        postaction={decorate,decoration={
            markings,
            mark=at position .5 with {\arrow[scale=3]{stealth}}
        }}
    }}
        \draw[-{Stealth[scale=3]}] (X) -- (A) node[draw = none, pos=0.5, left, yshift=10pt] {\scalebox{2}{\N}};
        \draw[-{Stealth[scale=3]}] (X) -- (B) node[draw = none, pos=0.5, right, yshift=10pt] {\scalebox{2}{\N}};
\end{tikzpicture}}}\phantom{\scalebox{2}{,}\quad\raisebox{-0.2cm}{\scalebox{0.26}{\begin{tikzpicture}[every node/.style={circle, draw, inner sep=4pt, minimum size=6mm}]
\node (X) at (0,0) {\phantom{\scalebox{2}{X}}};
\node (A) at (0,-2) {\phantom{\scalebox{2}{X}}};
\node (B) at (0,2) {\phantom{\scalebox{2}{X}}};
\tikzset{mid arrow/.style={
        postaction={decorate,decoration={
            markings,
            mark=at position .5 with {\arrow[scale=3]{stealth}}
        }}
    }}
        \draw[-] (X) -- (A) node[draw = none, pos=0.8, right, yshift=10pt] {\scalebox{2}{ \NW}};
        \draw[-] (X) -- (B) node[draw = none, pos=0.1, right, yshift=12pt] {\scalebox{2}{ \SE}};
\end{tikzpicture}}}
\scalebox{2.5}{\text{$\}$}}}\end{align*}
\caption*{Unrooted corner tree}
\end{subfigure}
\hfill
\begin{subfigure}[t]{0.4\textwidth}
\centering
\begin{align*}
\SNpolytoCT \scalebox{2.5}{\text{$($}}\raisebox{-0.2cm}{\scalebox{0.26}{\begin{tikzpicture}[every node/.style={circle, draw, inner sep=4pt, minimum size=6mm}]
\node (X) at (0,0) {\phantom{\scalebox{2}{X}}};
\node (A) at (-2,-3) {\phantom{\scalebox{2}{X}}};
\node (B) at (2,-3) {\phantom{\scalebox{2}{X}}};
\tikzset{mid arrow/.style={
        postaction={decorate,decoration={
            markings,
            mark=at position .5 with {\arrow[scale=3]{stealth}}
        }}
    }}
        \draw[-{Stealth[scale=3]}] (X) -- (A) node[draw = none, pos=0.5, left, yshift=10pt] {\scalebox{2}{\N}};
        \draw[-{Stealth[scale=3]}] (X) -- (B) node[draw = none, pos=0.5, right, yshift=10pt] {\scalebox{2}{\N}};
\end{tikzpicture}}}\scalebox{2.5}{\text{$)$}} &=\scalebox{2.5}{\text{$\{$}}\raisebox{-0.2cm}{\scalebox{0.26}{\begin{tikzpicture}[every node/.style={circle, draw, inner sep=4pt, minimum size=6mm}]
\node (X) at (0,0) {\phantom{\scalebox{2}{X}}};
\node (A) at (-2,-3) {\phantom{\scalebox{2}{X}}};
\node (B) at (2,-3) {\phantom{\scalebox{2}{X}}};
\tikzset{mid arrow/.style={
        postaction={decorate,decoration={
            markings,
            mark=at position .5 with {\arrow[scale=3]{stealth}}
        }}
    }}
        \draw[-] (X) -- (A) node[draw = none, pos=0.5, left, yshift=10pt] {\scalebox{2}{\NW}};
        \draw[-] (X) -- (B) node[draw = none, pos=0.5, right, yshift=10pt] {\scalebox{2}{\NW}};
\end{tikzpicture}}}\scalebox{2}{,}\quad\raisebox{-0.2cm}{\scalebox{0.26}{\begin{tikzpicture}[every node/.style={circle, draw, inner sep=4pt, minimum size=6mm}]
\node (X) at (0,0) {\phantom{\scalebox{2}{X}}};
\node (A) at (0,-2) {\phantom{\scalebox{2}{X}}};
\node (B) at (0,2) {\phantom{\scalebox{2}{X}}};
\tikzset{mid arrow/.style={
        postaction={decorate,decoration={
            markings,
            mark=at position .5 with {\arrow[scale=3]{stealth}}
        }}
    }}
        \draw[-] (X) -- (A) node[draw = none, pos=0.8, right, yshift=10pt] {\scalebox{2}{ \NW}};
        \draw[-] (X) -- (B) node[draw = none, pos=0.1, right, yshift=12pt] {\scalebox{2}{ \SE}};
\end{tikzpicture}}}
\scalebox{2.5}{\text{$\}$}}\end{align*}
\caption*{Corner trees from all rootings of an SN polytree}
\end{subfigure}

\vspace{1em}
\begin{subfigure}[t]{0.4\textwidth}
\centering
\begin{align*}
\SNpolytoCT \scalebox{2.5}{\text{$($}}\raisebox{-0.2cm}{\scalebox{0.26}{\begin{tikzpicture}[every node/.style={circle, draw, inner sep=4pt, minimum size=6mm}]
\node (X) at (0,0) {\scalebox{2}{X}};
\node (A) at (-2,-3) {\scalebox{2}{Y}};
\node (B) at (2,-3) {\scalebox{2}{Z}};
\tikzset{mid arrow/.style={
        postaction={decorate,decoration={
            markings,
            mark=at position .5 with {\arrow[scale=3]{stealth}}
        }}
    }}
        \draw[-{Stealth[scale=3]}] (X) -- (A) node[draw = none, pos=0.5, left, yshift=10pt] {\scalebox{2}{\N}};
        \draw[-{Stealth[scale=3]}] (X) -- (B) node[draw = none, pos=0.5, right, yshift=10pt] {\scalebox{2}{\N}};
\end{tikzpicture}}}\scalebox{2}{,} \text{X}\scalebox{2.5}{\text{$)$}} &=\phantom{\scalebox{2.5}{\text{$\{$}}}\raisebox{-0.2cm}{\scalebox{0.26}{\begin{tikzpicture}[every node/.style={circle, draw, inner sep=4pt, minimum size=6mm}]
\node (X) at (0,0) {\phantom{\scalebox{2}{X}}};
\node (A) at (-2,-3) {\phantom{\scalebox{2}{X}}};
\node (B) at (2,-3) {\phantom{\scalebox{2}{X}}};
\tikzset{mid arrow/.style={
        postaction={decorate,decoration={
            markings,
            mark=at position .5 with {\arrow[scale=3]{stealth}}
        }}
    }}
        \draw[-] (X) -- (A) node[draw = none, pos=0.5, left, yshift=10pt] {\scalebox{2}{\NW}};
        \draw[-] (X) -- (B) node[draw = none, pos=0.5, right, yshift=10pt] {\scalebox{2}{\NW}};
\end{tikzpicture}}}\phantom{\scalebox{2}{,}\quad\raisebox{-0.2cm}{\scalebox{0.26}{\begin{tikzpicture}[every node/.style={circle, draw, inner sep=4pt, minimum size=6mm}]
\node (X) at (0,0) {\phantom{\scalebox{2}{X}}};
\node (A) at (0,-2) {\phantom{\scalebox{2}{X}}};
\node (B) at (0,2) {\phantom{\scalebox{2}{X}}};
\tikzset{mid arrow/.style={
        postaction={decorate,decoration={
            markings,
            mark=at position .5 with {\arrow[scale=3]{stealth}}
        }}
    }}
        \draw[-] (X) -- (A) node[draw = none, pos=0.8, right, yshift=10pt] {\scalebox{2}{ \NW}};
        \draw[-] (X) -- (B) node[draw = none, pos=0.1, right, yshift=12pt] {\scalebox{2}{ \SE}};
\end{tikzpicture}}}
\scalebox{2.5}{\text{$\}$}}}\end{align*}
\caption*{Rooting in X}
\end{subfigure}
\hfill
\begin{subfigure}[t]{0.4\textwidth}
\centering
\begin{align*}
\SNpolytoCT \scalebox{2.5}{\text{$($}}\raisebox{-0.2cm}{\scalebox{0.26}{\begin{tikzpicture}[every node/.style={circle, draw, inner sep=4pt, minimum size=6mm}]
\node (X) at (0,0) {\scalebox{2}{X}};
\node (A) at (-2,-3) {\scalebox{2}{Y}};
\node (B) at (2,-3) {\scalebox{2}{Z}};
\tikzset{mid arrow/.style={
        postaction={decorate,decoration={
            markings,
            mark=at position .5 with {\arrow[scale=3]{stealth}}
        }}
    }}
        \draw[-{Stealth[scale=3]}] (X) -- (A) node[draw = none, pos=0.5, left, yshift=10pt] {\scalebox{2}{\N}};
        \draw[-{Stealth[scale=3]}] (X) -- (B) node[draw = none, pos=0.5, right, yshift=10pt] {\scalebox{2}{\N}};
\end{tikzpicture}}}\scalebox{2}{,} \text{Y}\scalebox{2.5}{\text{$)$}} &=\quad\raisebox{-0.2cm}{\scalebox{0.26}{\begin{tikzpicture}[every node/.style={circle, draw, inner sep=4pt, minimum size=6mm}]
\node (X) at (0,0) {\phantom{\scalebox{2}{X}}};
\node (A) at (0,-2) {\phantom{\scalebox{2}{X}}};
\node (B) at (0,2) {\phantom{\scalebox{2}{X}}};
\tikzset{mid arrow/.style={
        postaction={decorate,decoration={
            markings,
            mark=at position .5 with {\arrow[scale=3]{stealth}}
        }}
    }}
        \draw[-] (X) -- (A) node[draw = none, pos=0.8, right, yshift=10pt] {\scalebox{2}{ \NW}};
        \draw[-] (X) -- (B) node[draw = none, pos=0.1, right, yshift=12pt] {\scalebox{2}{ \SE}};
\end{tikzpicture}}}
\phantom{\scalebox{2.5}{\text{$\}$}}}\end{align*}
\caption*{Rooting in Y}
\end{subfigure}

\caption{Illustration of the conversions between SN polytrees and corner trees}
\end{figure}
\FloatBarrier

\subsection{Double posets}
\label{subsection:DP}
A (strict) \DEF{double poset} is a triple $\smalldp = \triplesmalldp$ where $A$ is a (finite) set, and $P_{A}$ and $ Q_{A}$ are strict posets on $A$. We refer to Section\;\ref{subsection:Poset} for a detailed description of the category of double posets.

Since we work with \textit{finite} double posets, we can always draw their respective \textbf{Hasse diagrams}, i.e., their respective underlying cover relation (see \Cref{definition:Hasse_diagram}),
which are in \emph{one-to-one correspondence} with the double poset. We denote the Hasse diagram of a poset $P$ with $H_{P}$.

\begin{remark}
 We work with \emph{strict} posets, where the morphisms correspond to \emph{strict} order-preserving maps, since this will lead to the connection with corner tree occurrences, as shown in \Cref{proposition:occ_as_DP}.
\end{remark}

We work with maps that are simultaneously respecting both the first and the second relation.
\begin{definition}
\label{definition:morphism_DP}
A \DEF{morphism of double posets} $\smalldp =\triplesmalldp$ and $\smalldpprime =\triplesmalldpprime$ is a map $f:A \to B$ such that 
\begin{align*}
    &\forall a_{1},a_{2} \in A: a_{1} <_{P_{A}} a_{2} \implies  f(a_{1}) <_{P_{B}} f(a_{2})\\
      &\forall a_{1},a_{2} \in A: a_{1} <_{Q_{A}} a_{2} \implies  f(a_{1}) <_{Q_{B}} f(a_{2})\\
\end{align*}
\end{definition}

We now define certain classes of double posets. Corner trees and permutations will arise as elements of these classes.

\begin{definition}[Twin double poset]
\label{definition:TwinDP}
  A double poset $\smalldp$ is a \textbf{twin double poset}
  if the underlying two Hasse diagrams are equal as (vertex-)labeled, undirected graphs.
  We denote a general twin double poset with $\smalltwin$.
\end{definition}

\begin{definition}[Tree double poset]
\label{definition:TreeDP}
  A double poset $\smalldp$ is a \textbf{tree double poset}
  if the underlying Hasse diagrams are both trees
  (in the graph-theoretic, undirected sense). See the terminology used in \cite{trotter1977dimension} at the beginning of Section 7.
  We denote a general tree double poset with $\smalltree$.
\end{definition}

\begin{definition}[Twin tree double posets]
\label{definition:TwinTreeDP}
  A double poset $\smalldp=(A,P_{A},Q_{A})$ is a \textbf{twin tree double poset}
  if it is both a twin double poset and a tree double poset.
  We denote a general twin tree double poset with $\smallct$. Spelled out, if $H_{P}$ and $H_{Q}$ are the Hasse diagrams of $P_{A}$ and $Q_{A}$, respectively,
  then as \emph{undirected graphs}
  \begin{itemize}
      \item they are (labeled) trees;
      \item they are equal as unrooted, labeled trees.
  \end{itemize}
\end{definition}

\begin{figure}[H]
    \centering
    \includegraphics[scale=0.4]{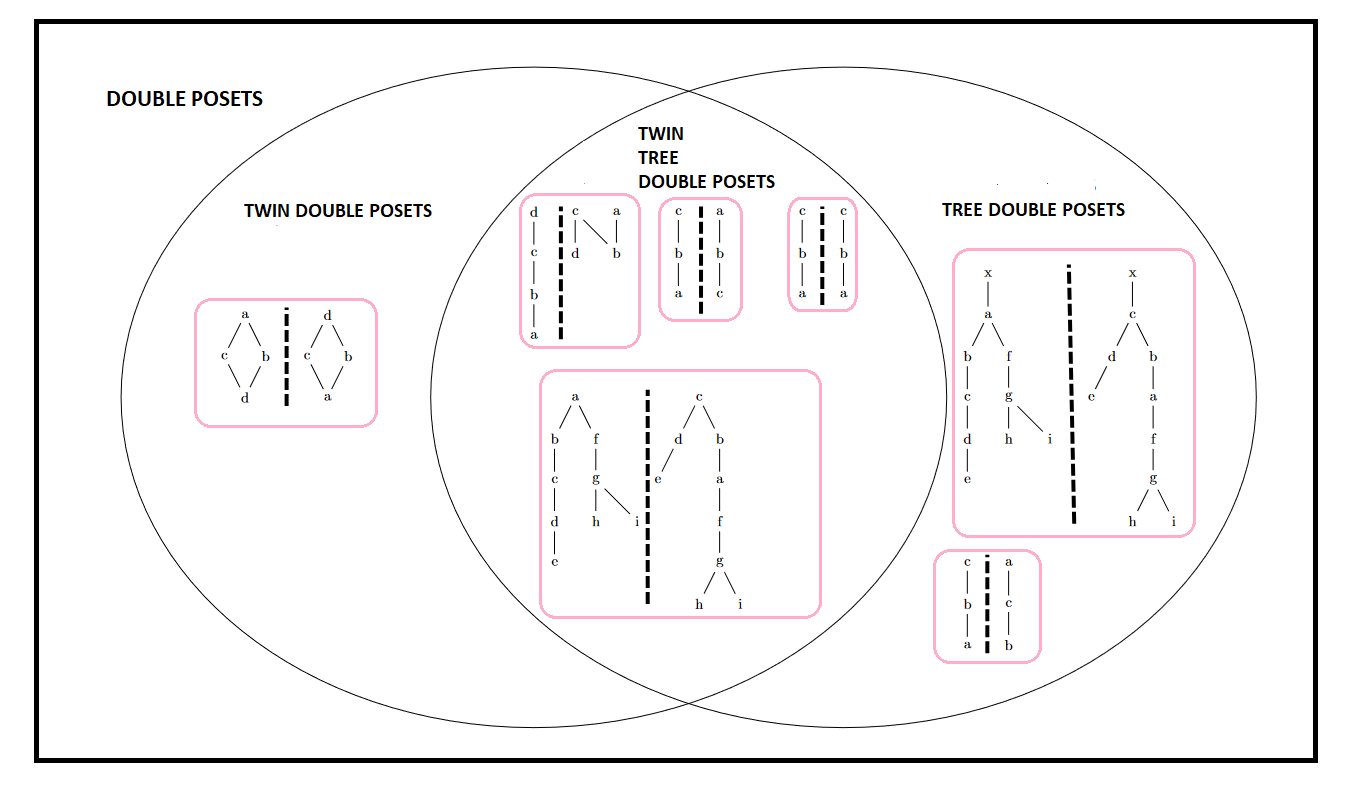}
    \caption{Double posets represented as pairs of Hasse diagrams}
\end{figure}
Corner trees are twin tree double posets in disguise, as the following lemma shows. Indeed, SN polytrees are
equivalent to twin tree double posets.

\begin{proposition}
\label{lemma:corresp_SN_poly_Twin_tree_DP}
 Let  $\Tsn$ be an SN polytree.
 Define the relations $<^{\prime}_{\West}$ and $<^{\prime}_{\South}$ on $\vertexset(\Tsn)$ by
 \begin{align*}
       u <^{\prime}_{\West} v &\iff (u,v) \in \edgeset(\Tsn) \\
       u <^{\prime}_{\South} v &\iff (u,v) \in \edgeset(\Tsn) \text{ and its label is \S}\\
       v <^{\prime}_{\South} u &\iff (u,v) \in \edgeset(\Tsn) \text{ and its label is \N}.
       \end{align*}
(Here, $(u,v) \in \edgeset(\Tsn)$ means that the arrow is pointing towards $u$.)
Let $<_{\West}$ and $<_{\South}$ be the respective
 transitive closures of $ <^{\prime}_{\West}$ and $<^{\prime}_{\South}$.
 Then ${(\vertexset(\Tsn),<_{\West}, <_{\South})}$ is a twin tree double poset. Furthermore, the map 
 \begin{align*}
 \SNpolytoSDP: \SNPolyTrees&\to \twintreedoubleposets\\
    \Tsn &\mapsto {(\vertexset(\Tsn),<_{\West}, <_{\South})}
 \end{align*}
is a bijection whose inverse we denote with $\SDPtoSNpoly$.
 \end{proposition}
 
\begin{figure}[H]
\centering
  \begin{align*}
\SNpolytoSDP \scalebox{3}{\text{$($}}\raisebox{-0.3cm}{\scalebox{0.3}{\begin{tikzpicture}[every node/.style={circle, draw, inner sep=4pt, minimum size=6mm}]
\node (X) at (0,0) {\scalebox{2}{X}};
\node (A) at (-2,-3) {\scalebox{2}{Y}};
\node (B) at (2,-3) {\scalebox{2}{Z}};
\tikzset{mid arrow/.style={
        postaction={decorate,decoration={
            markings,
            mark=at position .5 with {\arrow[scale=3]{stealth}}
        }}
    }}
        \draw[-{Stealth[scale=3]}] (X) -- (A) node[draw = none, pos=0.5, left, yshift=10pt] {\scalebox{2}{\N}};
        \draw[-{Stealth[scale=3]}] (X) -- (B) node[draw = none, pos=0.5, right, yshift=10pt] {\scalebox{2}{\N}};
\end{tikzpicture}}}\scalebox{3}{\text{$)$}} &=\raisebox{-0.5cm}{\scalebox{0.3}{\begin{tikzpicture}[every node/.style={circle, draw, inner sep=4pt, minimum size=6mm}]
\node (X) at (0,0) {\scalebox{2}{X}};
\node (A) at (-2,-3) {\scalebox{2}{Y}};
\node (B) at (2,-3) {\scalebox{2}{Z}};
\node[draw=none] (L1) at (4,1) {};
\node[draw=none] (M1) at (4,-4) {};
\node (X1) at (8,-3) {\scalebox{2}{X}};
\node (A1) at (10,0) {\scalebox{2}{Y}};
\node (B1) at (6,0) {\scalebox{2}{Z}};
\tikzset{mid arrow/.style={
        postaction={decorate,decoration={
            markings,
            mark=at position .5 with {\arrow[scale=3]{stealth}}
        }}
    }}
        \draw[-] (X) -- (A);
        \draw[-] (X) -- (B);
         \draw[-] (A1) -- (X1);
        \draw[-] (X1) -- (B1);
         \draw[-,dashed] (M1) -- (L1); 
\end{tikzpicture}}}
\end{align*}
\caption{Example of a twin tree double poset arising from an SN polytree}
\label{figure:ex_twin_tree_dp}
\end{figure}

The following map allows us to encode permutations as pairs of strict linear orders. We refer the reader to the related notion of special poset from \cite{malvenuto2011self}.
\begin{definition}
\label{definition:perm_as_double}
    For $\sigma \in \permutations$
    define
    \begin{align*}
         \SntoDP( \sigma ) := (\{1,...,n\},1 < \cdots < n, \sigma^{-1}(1) < \cdots < \sigma^{-1}(n)) \in \DP,
    \end{align*}
    and set
    \begin{align*}
        \mathcal{S}
        :=
        \operatorname{im} \SntoDP.
    \end{align*}
    Then $\SntoDP: \permutations \to    \mathcal{S}$
    is obviously bijective and
    we denote its inverse by $\DPtoSn$.
\end{definition}

\begin{example}
\label{example:permutation}
Consider the permutation $\sigma = \perm{1423}$. Then 
\begin{align*}
  \SntoDP(\sigma) = (\{1,2,3,4\},1<2<3<4,1<3<4<2)
\end{align*}
and
$\DPtoSn((\{1,2,3,4\},1<2<3<4,1<3<4<2)) = \perm{1423}$. We also represent double poset that underlie permutations as in \Cref{figure:pattern1} or as in  \Cref{figure:pattern}.

\begin{figure}[H]
\centering
\begin{minipage}{0.49\textwidth}
\centering
  \begin{tikzpicture}[scale=0.4]
      \draw[->,opacity=0] (0,0) -- (5,0) {};
            \draw[->,opacity=0] (0,0) -- (0,5)  {};
      \node[draw=none] at (1, -0.5) {};
         \node[draw=none]  at (2, -0.5) {};
         \node[draw=none]  at (3, -0.5) {};
 \node[draw=none]  at (4, -0.5) {};
            \foreach \x/\p in {1/1, 1/2, 1/3,1/4} {
                \fill[black] (\x,\p) circle (4pt);
            }
             \foreach \x/\p in {3/1, 3/2, 3/3,3/4} {
                \fill[black] (\x,\p) circle (4pt);
            }
    \draw[-,thick] (1,1) -- (1,2);
    \draw[-,thick] (1,3) -- (1,4);
\draw[-, thick] (1,1)  (1,4);
     \draw[-,thick] (1,2) -- (1,3);
       \draw[-,thick] (3,1) -- (3,2);
    \draw[-,thick] (3,3) -- (3,4);
     \draw[-,thick] (3,2) -- (3,3);
     \draw[-,thick,dashed] (2,0) -- (2,5);
        \node[draw=none,left] at (1,1) {\scalebox{0.75}{1}};
   \node[draw=none,left] at (1,2) {\scalebox{0.75}{2}};
   \node[draw=none,left] at (1,3) {\scalebox{0.75}{3}};
    \node[draw=none,left] at (1,4) {\scalebox{0.75}{4}};
          \node[draw=none,right] at (3,1) {\scalebox{0.75}{1}};
   \node[draw=none,right] at (3,2) {\scalebox{0.75}{3}};
   \node[draw=none,right] at (3,3) {\scalebox{0.75}{4}};
    \node[draw=none,right] at (3,4) {\scalebox{0.75}{2}};
\end{tikzpicture}
\caption{Pairs of Hasse diagrams}
    \label{figure:pattern1}
\end{minipage}
\begin{minipage}{0.49\textwidth}
\centering
    \begin{tikzpicture}[scale=0.4,every node/.style={circle, draw, inner sep=2pt, minimum size=6mm}]
               \draw[->] (0,0) -- (5,0) {};
            \draw[->] (0,0) -- (0,5)  {};
         \node[draw=none] at (1, -0.5) {\textup{1}};
         \node[draw=none]  at (2, -0.5) {\textup{2}};
         \node[draw=none]  at (3, -0.5) {\textup{3}};
 \node[draw=none]  at (4, -0.5) {\textup{4}};
            \foreach \x/\p in {1/1, 2/4, 3/2,4/3} {
                \fill[black] (\x,\p) circle (4pt);
            }

\end{tikzpicture}
\caption{\West and \South posets}
    \label{figure:pattern}
\end{minipage}
\end{figure}

\end{example}

\begin{remark}
   For any permutation $\sigma$, $\SntoDP(\sigma) \in \treedoubleposets$. We have $\SntoDP(\sigma) \in \twintreedoubleposets$ only for the cases $\sigma = \id$
   and $\sigma = \mathtt{[n\,\cdots\,3\,2\,1]}$ (the full reversal).
\end{remark}

With these maps in hand,
we can now characterize corner tree occurrences
as morphisms in the category of double posets.
\begin{proposition}
\label{proposition:occ_as_DP}
Let $\Pi\in \permutations(n)$ and $\CT \in \CornerTrees$. Then 
    \begin{align*}
\{f:\vertexset(\CT)\to [n]\,:\,\,f\,\,\text{is an occurrence of }\,\CT\,\,\text{in}\,\,\Pi\} 
&= \Hom(  \CTtoDP(\CT), \SntoDP(\Pi) ),
\end{align*}
where $\Hom(\smalldp,\smalldpprime)$ denotes the set of morphisms between the double posets $\smalldp,\smalldp^{\prime}$.
\end{proposition}
The proof is simple and is left as an exercise to the reader.

\begin{example}
\label{example:occurrence}
Consider the permutation from \Cref{example:permutation} and the corner tree from \Cref{figure:ex_twin_tree_dp_from_corner}. The map $f:\{\textup{X},\textup{Y},\textup{Z}\}\to [4]$, $f(\textup{X}) = 3, f(\textup{Y}) = 2, f(\textup{Z})=2$ is an occurrence of the corner tree on the permutation. It can also be seen as a morphism between the two underlying double posets.
\begin{figure}[H]
\centering
  \begin{align*}
\CTtoDP \scalebox{3}{\text{$($}}\raisebox{-0.3cm}{\scalebox{0.3}{\begin{tikzpicture}[every node/.style={circle, draw, inner sep=4pt, minimum size=6mm}]
\node (X) at (0,0) {\scalebox{2}{X}};
\node (A) at (-2,-3) {\scalebox{2}{Y}};
\node (B) at (2,-3) {\scalebox{2}{Z}};
\tikzset{mid arrow/.style={
        postaction={decorate,decoration={
            markings,
            mark=at position .5 with {\arrow[scale=3]{stealth}}
        }}
    }}
        \draw[-] (X) -- (A) node[draw = none, pos=0.5, left, yshift=10pt] {\scalebox{2}{\NW}};
        \draw[-] (X) -- (B) node[draw = none, pos=0.5, right, yshift=10pt] {\scalebox{2}{\NW}};
\end{tikzpicture}}}\scalebox{3}{\text{$)$}} &=\raisebox{-0.5cm}{\scalebox{0.3}{\begin{tikzpicture}[every node/.style={circle, draw, inner sep=4pt, minimum size=6mm}]
\node (X) at (0,0) {\scalebox{2}{X}};
\node (A) at (-2,-3) {\scalebox{2}{Y}};
\node (B) at (2,-3) {\scalebox{2}{Z}};
\node[draw=none] (L1) at (4,1) {};
\node[draw=none] (M1) at (4,-4) {};
\node (X1) at (8,-3) {\scalebox{2}{X}};
\node (A1) at (10,0) {\scalebox{2}{Y}};
\node (B1) at (6,0) {\scalebox{2}{Z}};
\tikzset{mid arrow/.style={
        postaction={decorate,decoration={
            markings,
            mark=at position .5 with {\arrow[scale=3]{stealth}}
        }}
    }}
        \draw[-] (X) -- (A);
        \draw[-] (X) -- (B);
         \draw[-] (A1) -- (X1);
        \draw[-] (X1) -- (B1);
         \draw[-,dashed] (M1) -- (L1); 
\end{tikzpicture}}}
\end{align*}
\caption{Example of a twin tree double poset arising from a corner tree}
\label{figure:ex_twin_tree_dp_from_corner}
\end{figure}
Observe that $\CTtoDP:= \SNpolytoSDP \circ \CTtoSNpoly$.
\end{example}

\begin{remark}
\label{remark:rooting_tt}
If $\smallct \in \twintreedoubleposets$ and $v \in \vertexset(\smallct)$, then $\TTDPtoCT(\smallct,v)$ denotes the corner tree obtained by picking $v$ as the root and labeling the edges accordingly.
\end{remark}

\section{Counting permutations using double posets}
\label{section:count_perm_usingDP}

\subsection{Switching between homomorphisms and embeddings}

In this section, we work with the free $\Q$-vector space on $\DP$, the set of isomorphism classes of finite double posets
\begin{align*}
   \Q[\DP]:=\bigoplus_{n} \Q[\doubleposets(n)],
\end{align*}
where $\doubleposets(n)$ denotes the set of equivalence classes of double posets with $n$ elements. We introduce two linear functionals on $\Q[\DP]$ that encode the number of homomorphisms and double order embeddings respectively. The regular monomorphisms in the category of strict double posets are precisely the double order embeddings, as shown in \Cref{section:suppl}.
\begin{definition}
\label{definition:DP_linear_functionals}
Fix a (large) double poset $\largedp\in \bigcup_n \DP(n)$. On basis elements $\smalldp \in \bigcup_n \doubleposets(n)$ define linear functionals $\bigoplus_{n} \Q[\doubleposets(n)] \to \Q$.
   \begin{align*}
    \langle \Profile_{\hom}(\largedp), \smalldp \rangle&:=|\Hom(\smalldp,\largedp)|\\
    \langle \Profile_{\regmono}(\largedp), \smalldp \rangle&:=|\RegMono(\smalldp,\largedp)|.
\end{align*} 
\end{definition}
We denote them with $\Profile$, since they encode two different  graph profiles of a double poset, here regarded as a directed graph.

\begin{example}
Consider the following two examples where we write the Hasse diagrams of the underlying isomorphism classes of the double posets.
\begin{align*}
\Profile_{\hom}\;\raisebox{-0.2cm}{\scalebox{0.2}{\begin{tikzpicture}[every node/.style={circle, draw, inner sep=2pt, minimum size=6mm}]
   \node[fill=black][fill=black,label=right:{\scalebox{3}{1}}] 
   (A) at (0,0) {};
  \node[fill=black][fill=black,label=right:{\scalebox{3}{2}}]  
   (C) at (0,2) {};
      \node[fill=black][fill=black,label=right:{\scalebox{3}{1}}]  (A1) at (4,0) {};
  \node[fill=black][fill=black,label=right:{\scalebox{3}{2}}]  (C1) at (4,2) {};
   \node[draw=none]  (A2) at (2,3) {};
    \node[draw=none] (C2) at (2,-1) {};
  \tikzset{mid arrow/.style={
    postaction={decorate,decoration={
      markings,
      mark=at position .5 with {\arrow[scale=3]{stealth}}
    }}
  }}
  \draw[line width=3pt,-](C) -- (A) node[draw = none, pos=0.5, above, yshift=10pt] {};
   \draw[line width=3pt,-](C1) -- (A1) node[draw = none, pos=0.5, above, yshift=10pt] {};
   \draw[line width=3pt, dashed,-](C2) -- (A2) node[draw = none, pos=0.5, above, yshift=10pt] {};
\end{tikzpicture}}} &= \e +2\,\, \raisebox{-0.1cm}{\scalebox{0.2}{\begin{tikzpicture}[every node/.style={circle, draw, inner sep=2pt, minimum size=6mm}]
   \node[fill=black][fill=black,label=right:{\scalebox{3}{1}}]  
   (A) at (0,0) {};
\node[draw=none]  
   (C) at (2,2) {};
   \node[draw=none]  
   (D) at (2,-1) {};
   \node[fill=black][fill=black,label=right:{\scalebox{3}{1}}]  
   (B) at (4,0) {};
  \tikzset{mid arrow/.style={
    postaction={decorate,decoration={
      markings,
      mark=at position .5 with {\arrow[scale=3]{stealth}}
    }}
  }}
  \draw[line width=3pt,dashed](C) -- (D) node[draw = none, pos=0.5, above, yshift=10pt] {};
\end{tikzpicture}}}\,+4\,\,\raisebox{-0.1cm}{\scalebox{0.2}{\begin{tikzpicture}[every node/.style={circle, draw, inner sep=2pt, minimum size=6mm}]
   \node[fill=black][fill=black,label=right:{\scalebox{3}{2}}]  
   (A) at (0,0) {};
  \node[fill=black][fill=black,label=right:{\scalebox{3}{1}}]  (B) at (0,-2) {};
  \node[fill=black][fill=black,label=right:{\scalebox{3}{2}}]  (C) at (3,0) {};
    \node[fill=black][fill=black,label=right:{\scalebox{3}{2}}]  (C) at (3,0) {};
  \node[fill=black][fill=black,label=right:{\scalebox{3}{1}}]  (D) at (3,-2) {};
    \node[draw=none]  (E) at (2,1) {};
      \node[draw=none]  (F) at (2,-3) {};
  \tikzset{mid arrow/.style={
    postaction={decorate,decoration={
      markings,
      mark=at position .5 with {\arrow[scale=3]{stealth}}
    }}
  }}
   \draw[line width=3pt,dashed](E) -- (F) node[draw = none, pos=0.5, above, yshift=10pt] {};
 \end{tikzpicture}}}\,+\,\,\raisebox{-0.1cm}{\scalebox{0.2}{\begin{tikzpicture}[every node/.style={circle, draw, inner sep=2pt, minimum size=6mm}]
   \node[fill=black][fill=black,label=right:{\scalebox{3}{2}}]  
   (A) at (0,0) {};
  \node[fill=black][fill=black,label=right:{\scalebox{3}{1}}]  (B) at (0,-2) {};
  \node[fill=black][fill=black,label=right:{\scalebox{3}{2}}]  (C) at (3,0) {};
    \node[fill=black][fill=black,label=right:{\scalebox{3}{2}}]  (C) at (3,0) {};
  \node[fill=black][fill=black,label=right:{\scalebox{3}{1}}]  (D) at (3,-2) {};
    \node[draw=none]  (E) at (2,1) {};
      \node[draw=none]  (F) at (2,-3) {};
  \tikzset{mid arrow/.style={
    postaction={decorate,decoration={
      markings,
      mark=at position .5 with {\arrow[scale=3]{stealth}}
    }}
  }}
   \draw[line width=3pt,dashed](E) -- (F) node[draw = none, pos=0.5, above, yshift=10pt] {};
   \draw[line width=3pt,-](A) -- (B) node[draw = none, pos=0.5, above, yshift=10pt] {};
 \end{tikzpicture}}}\;+\,\,\raisebox{-0.1cm}{\scalebox{0.2}{\begin{tikzpicture}[every node/.style={circle, draw, inner sep=2pt, minimum size=6mm}]
   \node[fill=black][fill=black,label=right:{\scalebox{3}{2}}]  
   (A) at (0,0) {};
  \node[fill=black][fill=black,label=right:{\scalebox{3}{1}}]  (B) at (0,-2) {};
  \node[fill=black][fill=black,label=right:{\scalebox{3}{2}}]  (C) at (3,0) {};
    \node[fill=black][fill=black,label=right:{\scalebox{3}{2}}]  (C) at (3,0) {};
  \node[fill=black][fill=black,label=right:{\scalebox{3}{1}}]  (D) at (3,-2) {};
    \node[draw=none]  (E) at (2,1) {};
      \node[draw=none]  (F) at (2,-3) {};
  \tikzset{mid arrow/.style={
    postaction={decorate,decoration={
      markings,
      mark=at position .5 with {\arrow[scale=3]{stealth}}
    }}
  }}
   \draw[line width=3pt,dashed](E) -- (F) node[draw = none, pos=0.5, above, yshift=10pt] {};
    \draw[line width=3pt,-](C) -- (D) node[draw = none, pos=0.5, above, yshift=10pt] {};
 \end{tikzpicture}}}\;+\,\,\raisebox{-0.1cm}{\scalebox{0.2}{\begin{tikzpicture}[every node/.style={circle, draw, inner sep=2pt, minimum size=6mm}]
   \node[fill=black][fill=black,label=right:{\scalebox{3}{2}}]  
   (A) at (0,0) {};
  \node[fill=black][fill=black,label=right:{\scalebox{3}{1}}]  (B) at (0,-2) {};
  \node[fill=black][fill=black,label=right:{\scalebox{3}{2}}]  (C) at (3,0) {};
    \node[fill=black][fill=black,label=right:{\scalebox{3}{2}}]  (C) at (3,0) {};
  \node[fill=black][fill=black,label=right:{\scalebox{3}{1}}]  (D) at (3,-2) {};
    \node[draw=none]  (E) at (2,1) {};
      \node[draw=none]  (F) at (2,-3) {};
  \tikzset{mid arrow/.style={
    postaction={decorate,decoration={
      markings,
      mark=at position .5 with {\arrow[scale=3]{stealth}}
    }}
  }}
   \draw[line width=3pt,dashed](E) -- (F) node[draw = none, pos=0.5, above, yshift=10pt] {};
    \draw[line width=3pt,-](A) -- (B) node[draw = none, pos=0.5, above, yshift=10pt] {};
     \draw[line width=3pt,-](C) -- (D) node[draw = none, pos=0.5, above, yshift=10pt] {};
 \end{tikzpicture}}}+\cdots\\
\Profile_{\regmono}\raisebox{-0.4cm}{\scalebox{0.2}{\begin{tikzpicture}[every node/.style={circle, draw, inner sep=2pt, minimum size=6mm}]
   \node[fill=black][fill=black,label=right:{\scalebox{3}{2}}] 
   (A) at (0,0) {};
  \node[fill=black][fill=black,label=right:{\scalebox{3}{1}}] (B) at (0,-2) {};
  \node[fill=black][fill=black,label=right:{\scalebox{3}{3}}] 
   (C) at (0,2) {};
     \node[fill=black][fill=black,label=right:{\scalebox{3}{2}}] (A1) at (4,0) {};
  \node[fill=black][fill=black,label=right:{\scalebox{3}{1}}] (B1) at (4,-2) {};
  \node[fill=black][fill=black,label=right:{\scalebox{3}{3}}] (C1) at (4,2) {};
  \node[draw=none] 
   (X) at (2,3) {};
   \node[draw=none] 
   (Y) at (2,-3) {};
 Define styles for the arrowheads
  \tikzset{mid arrow/.style={
    postaction={decorate,decoration={
      markings,
      mark=at position .5 with {\arrow[scale=3]{stealth}}
    }}
  }}
  \draw[line width=3pt, -](B) -- (A) node[draw = none, pos=0.5, above, yshift=10pt] {};
   \draw[line width=3pt,-](A) -- (C) node[draw = none, pos=0.5, above, yshift=10pt] {};
      \draw[line width=3pt,-] (B1) to (C1);
      \draw[line width=3pt,-] (A1) to (C1);
      \draw[line width=3pt,dashed] (X) to (Y);
\end{tikzpicture}}}&= \e +3\,\, \raisebox{-0.1cm}{\scalebox{0.2}{\begin{tikzpicture}[every node/.style={circle, draw, inner sep=2pt, minimum size=6mm}]
   \node[fill=black][fill=black,label=right:{\scalebox{3}{1}}]  
   (A) at (0,0) {};
\node[draw=none]  
   (C) at (2,2) {};
   \node[draw=none]  
   (D) at (2,-1) {};
   \node[fill=black][fill=black,label=right:{\scalebox{3}{1}}]  
   (B) at (4,0) {};
  \tikzset{mid arrow/.style={
    postaction={decorate,decoration={
      markings,
      mark=at position .5 with {\arrow[scale=3]{stealth}}
    }}
  }}
  \draw[line width=3pt,dashed](C) -- (D) node[draw = none, pos=0.5, above, yshift=10pt] {};
\end{tikzpicture}}}\,+3\,\,\raisebox{-0.1cm}{\scalebox{0.2}{\begin{tikzpicture}[every node/.style={circle, draw, inner sep=2pt, minimum size=6mm}]
   \node[fill=black][fill=black,label=right:{\scalebox{3}{2}}]  
   (A) at (0,0) {};
  \node[fill=black][fill=black,label=right:{\scalebox{3}{1}}]  (B) at (0,-2) {};
  \node[fill=black][fill=black,label=right:{\scalebox{3}{2}}]  (C) at (3,0) {};
    \node[fill=black][fill=black,label=right:{\scalebox{3}{2}}]  (C) at (3,0) {};
  \node[fill=black][fill=black,label=right:{\scalebox{3}{1}}]  (D) at (3,-2) {};
    \node[draw=none]  (E) at (2,1) {};
      \node[draw=none]  (F) at (2,-3) {};
  \tikzset{mid arrow/.style={
    postaction={decorate,decoration={
      markings,
      mark=at position .5 with {\arrow[scale=3]{stealth}}
    }}
  }}
   \draw[line width=3pt,dashed](E) -- (F) node[draw = none, pos=0.5, above, yshift=10pt] {};
    \draw[line width=3pt,-](A) -- (B) node[draw = none, pos=0.5, above, yshift=10pt] {};
     \draw[line width=3pt,-](C) -- (D) node[draw = none, pos=0.5, above, yshift=10pt] {};
 \end{tikzpicture}}}\,+\raisebox{-0.4cm}{\scalebox{0.2}{\begin{tikzpicture}[every node/.style={circle, draw, inner sep=2pt, minimum size=6mm}]
   \node[fill=black][fill=black,label=right:{\scalebox{3}{2}}] 
   (A) at (0,0) {};
  \node[fill=black][fill=black,label=right:{\scalebox{3}{1}}] (B) at (0,-2) {};
  \node[fill=black][fill=black,label=right:{\scalebox{3}{3}}] 
   (C) at (0,2) {};
     \node[fill=black][fill=black,label=right:{\scalebox{3}{2}}] (A1) at (4,0) {};
  \node[fill=black][fill=black,label=right:{\scalebox{3}{1}}] (B1) at (4,-2) {};
  \node[fill=black][fill=black,label=right:{\scalebox{3}{3}}] (C1) at (4,2) {};
  \node[draw=none] 
   (X) at (2,3) {};
   \node[draw=none] 
   (Y) at (2,-3) {};
 Define styles for the arrowheads
  \tikzset{mid arrow/.style={
    postaction={decorate,decoration={
      markings,
      mark=at position .5 with {\arrow[scale=3]{stealth}}
    }}
  }}
  \draw[line width=3pt, -](B) -- (A) node[draw = none, pos=0.5, above, yshift=10pt] {};
   \draw[line width=3pt,-](A) -- (C) node[draw = none, pos=0.5, above, yshift=10pt] {};
      \draw[line width=3pt,-] (B1) to (C1);
      \draw[line width=3pt,-] (A1) to (C1);
      \draw[line width=3pt,dashed] (X) to (Y);
\end{tikzpicture}}}
\end{align*}
\end{example}

As seen in 
\Cref{proposition:occ_as_DP},
$\Profile_{\hom}$ is compatible
with corner tree counting. On the other hand, $\Profile_{\regmono}$ is compatible
with permutation counting, as shown in the following lemma
\begin{lemma}
  \label{lem:regmono_perm_patt}
  \begin{align*}
    \langle \Profile_{\regmono}(\SntoDP(\Pi)), \SntoDP(\sigma) \rangle
    &=
    \langle \PC(\Pi), \sigma \rangle.
  \end{align*}
\end{lemma}

Analogous linear maps on simple graphs are defined in \cite{Caudillo2025}. They were originally introduced by Lovasz, see \cite{lovasz2012large}, to count subgraphs and induced subgraphs in terms of graph homomorphisms. In our context, these maps can be used to obtain the permutation patterns counted by double posets.

\begin{theorem}
\label{theorem:lovasz_change_basis}
Consider the linear map
 \begin{align*}
 \Phi_{\regmono \leftarrow \hom }:  \bigoplus_{n} \Q[\doubleposets(n)] &\to  \bigoplus_{n} \Q[\doubleposets(n)]\\\smalldp &\mapsto \sum_{\smalldp^{\prime}}\frac{|\Epi(\smalldp,\smalldp^{\prime})|}{|\Aut(\smalldp^{\prime})|} \smalldp^{\prime}.
 \end{align*}
Then for $\smalldp,\largedp \in \doubleposets$, we have
  \begin{align*}
      \langle \Profile_{\hom}(\largedp), \smalldp \rangle &= \langle \Profile_{\regmono}(\largedp), \Phi_{\regmono\leftarrow \hom}( \smalldp ) \rangle
  \end{align*}
\end{theorem}
\begin{proof}
The result follows immediately by applying \Cref{theorem:hom_reg_mono}.
\end{proof}
\subsection{Counting double poset morphisms when $\largedp = \SntoDP(\Pi)$}
\label{ss:countbigguyperm}

As shown in \Cref{proposition:countpermasperm}, when the ``large'' double poset corresponds to a permutation, counting double posets morphisms can always be rewritten as counting permutation patterns.

We first define the projection on the space of permutations embedded as double posets.

\begin{definition}
\label{definition:projection}
Define
  \begin{align*}
    \proj_{\mathcal{S}}: \bigoplus_{n} \Q[\doubleposets(n)] &\to  \bigoplus_{n} \Q[\doubleposets(n)],
  \end{align*}
to be the linear map,
which on basis elements is given as
  \begin{align*}
    \proj_{\mathcal{S}}(\smalldp) =
    \begin{cases}
      \smalldp, & \text{ if $\smalldp = \SntoDP(\sigma)$ for some $\sigma \in \permutations$},\\
      0, & \text{ otherwise}.
    \end{cases}
  \end{align*}
\end{definition}
The following two lemmas will be used in the proof of \Cref{proposition:countpermasperm}.
Notice that $\proj_{\mathcal{S}} \circ\;\Phi_{\regmono \leftarrow \hom }$ is the map already hinted at in \cite[Lemma 2.1]{even2021counting}. 
\begin{lemma}
\label{lemma:map_DP_lc_perm_patt}
  \begin{align*}
    \proj_{\mathcal{S}}(\Phi_{\regmono \leftarrow \hom }(\smalldp)) =  \sum_{\sigma}|\Epi(\smalldp,\SntoDP(\sigma))|\SntoDP(\sigma).
\end{align*}
\end{lemma}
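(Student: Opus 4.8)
The plan is to unwind the definition of $\Phi_{\regmono \leftarrow \mor}$ and then apply the projection $\proj_{\permutations_{\DP}}$ term by term. Recall that by definition
\begin{align*}
  \Phi_{\regmono \leftarrow \mor}(\smalldp) = \sum_{\smalldp^{\prime}} \frac{|\Epi(\smalldp,\smalldp^{\prime})|}{|\Aut(\smalldp^{\prime})|}\,\smalldp^{\prime},
\end{align*}
where the sum ranges over isomorphism classes of finite double posets $\smalldp^{\prime}$. Applying $\proj_{\permutations_{\DP}}$, which is linear and kills every basis element that is not of the form $\SntoDP(\sigma)$, leaves only the terms indexed by permutations:
\begin{align*}
  \proj_{\permutations_{\DP}}(\Phi_{\regmono \leftarrow \mor}(\smalldp)) = \sum_{\sigma \in \permutations} \frac{|\Epi(\smalldp,\SntoDP(\sigma))|}{|\Aut(\SntoDP(\sigma))|}\,\SntoDP(\sigma).
\end{align*}

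The only thing left to verify is that $|\Aut(\SntoDP(\sigma))| = 1$ for every permutation $\sigma$. This is the crux of the argument, though it is not really an obstacle: a double poset automorphism of $\SntoDP(\sigma) = (\{1,\dots,n\},\, 1 < \cdots < n,\, \sigma^{-1}(1) < \cdots < \sigma^{-1}(n))$ must in particular be an order automorphism of the linear order $1 < \cdots < n$, and the only order automorphism of a finite chain is the identity. Hence $\Aut(\SntoDP(\sigma))$ is trivial, every denominator above equals $1$, and the claimed formula follows.

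I would present this as three short steps: (1) expand the definition of $\Phi_{\regmono \leftarrow \mor}$; (2) observe that $\proj_{\permutations_{\DP}}$ restricts the sum to basis elements $\SntoDP(\sigma)$, using that $\SntoDP$ is a bijection onto $\permutations_{\DP}$ (\Cref{def:perm_as_double}) so these are pairwise distinct and the reindexing by $\sigma \in \permutations$ is legitimate; (3) compute $|\Aut(\SntoDP(\sigma))| = 1$ via the rigidity of finite linear orders. The main subtlety worth spelling out is step (3), since it is what makes the binomial-type coefficients collapse; everything else is a formal rewriting. I expect no genuine difficulty here — the lemma is essentially a bookkeeping statement that sets up the cleaner reformulation used later in \Cref{prop:countpermasperm}.
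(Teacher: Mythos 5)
Your proof is correct and follows essentially the same route as the paper's, which consists solely of the observation that $\Aut(\SntoDP(\sigma))=\{\id\}$; you simply spell out the preceding unwinding of $\Phi_{\regmono\leftarrow\mor}$ and the projection in more detail, and give the chain-rigidity justification for the automorphism group being trivial.
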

\begin{proof}
Clearly $\forall \sigma \in \permutations:\;\Aut(\SntoDP(\sigma))=\{\id\}$ and therefore $|\Aut(\SntoDP(\sigma))|=1$. 
\end{proof}
\begin{lemma}
  \label{lem:projection-perm}
  \begin{align*}
    \langle \Profile_{\regmono}(\SntoDP(\Pi)), \smalldp \rangle
    =
    \langle \Profile_{\regmono}(\SntoDP(\Pi)), \proj_{\mathcal{S}}(\smalldp) \rangle
  \end{align*}
\end{lemma}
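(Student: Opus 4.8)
The plan is to unwind the definitions and reduce the claimed identity to a statement about a single basis element $\smalldp \in \doubleposets(n)$. By \Cref{def:DP_linear_functionals} the left-hand side is $\#\RegMono(\smalldp,\SntoDP(\Pi))$, while $\proj_{\permutations_{\DP}}$ acts on basis elements by fixing those lying in $\permutations_{\DP}$ and sending every other basis element to $0$; so by linearity the right-hand side equals $\#\RegMono(\smalldp,\SntoDP(\Pi))$ when $\smalldp\in\permutations_{\DP}$ and equals $0$ otherwise. When $\smalldp\in\permutations_{\DP}$ the identity is trivial since $\proj_{\permutations_{\DP}}(\smalldp)=\smalldp$. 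Thus the whole lemma is equivalent to the following claim: \emph{if $\smalldp\notin\permutations_{\DP}$, then there is no regular monomorphism $\smalldp\to\SntoDP(\Pi)$.}

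To prove that claim I would first describe the shape of a regular monomorphism whose codomain is $\SntoDP(\Pi)$. Recall from \Cref{def:perm_as_double} that both component orders of $\SntoDP(\Pi)=(\{1,\dots,n\},1<\cdots<n,\Pi^{-1}(1)<\cdots<\Pi^{-1}(n))$ are \emph{linear}. A regular monomorphism is by definition an equalizer, and equalizers in $\DPoset$ are computed on underlying sets, with both strict orders obtained by restriction: concretely, the equalizer of $f,g\colon\smalldpprime\to\smalldp''$ is $\{b : f(b)=g(b)\}$ equipped with the two orders of $\smalldpprime$ restricted to this subset (one checks directly that the inclusion is a double poset morphism and has the universal property; see also the limit computations in \Cref{section:double_pos}). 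Hence, up to isomorphism, any regular monomorphism $m\colon\smalldp\to\SntoDP(\Pi)$ exhibits $\smalldp$ as a subset $S\subseteq\{1,\dots,n\}$ carrying the two orders of $\SntoDP(\Pi)$ restricted to $S$. Since the restriction of a linear order to a subset is again linear, both component orders of $\smalldp$ must be linear.

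Finally I would check that a double poset $(A,P_A,Q_A)$ whose two orders are both linear necessarily lies in $\permutations_{\DP}$: enumerating $A$ in increasing $P_A$-order gives an isomorphism onto $(\{1,\dots,m\},1<\cdots<m)$ in the first coordinate, and transports $Q_A$ to some linear order $q_1<\cdots<q_m$ on $\{1,\dots,m\}$, which is exactly the second order of $\SntoDP(\sigma)$ for the permutation $\sigma$ determined by $\sigma^{-1}(i)=q_i$ for all $i$. This contradicts $\smalldp\notin\permutations_{\DP}$, so indeed $\#\RegMono(\smalldp,\SntoDP(\Pi))=0$ in that case, and the proof is complete. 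I expect the only point that needs care is the identification of regular monomorphisms with ``restrict-both-orders'' sub-double-poset inclusions — i.e.\ invoking the equalizer description (or the explicit classification of $\RegMono$ in \Cref{section:double_pos}); everything else is routine bookkeeping, and in particular the weaker fact that $\#\Mono(\smalldp,\SntoDP(\Pi))$ need not vanish shows that the regularity of the monomorphism is genuinely used.
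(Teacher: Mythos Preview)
Your proposal is correct and follows essentially the same approach as the paper: the paper's proof is the terse three-line version of your argument, observing that if a regular monomorphism $f:\triplesmalldp\to\SntoDP(\Pi)$ exists then $P_A$ and $Q_A$ are total orders, whence $\smalldp\cong\SntoDP(\sigma)$ for some $\sigma$. You have spelled out the same reasoning more carefully, including the reduction to basis elements, the justification via the equalizer description of $\RegMono$ (the paper relies silently on the double-order-embedding characterization from \Cref{section:double_pos}), and the explicit construction of $\sigma$ from a pair of linear orders.
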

\begin{proof}
Let $\smalldp =\triplesmalldp$ and let $f:\triplesmalldp \to \SntoDP(\Pi)$ be a regular monomorphism. Then $P_{A}$ and $Q_{A}$ are total orders. Therefore, we have $\smalldp \cong \SntoDP(\sigma)$
 for some permutation $\sigma$.
\end{proof}

The following proposition shows that counting double post morphisms into a permutation can always be rewritten as a linear combination of permutation pattern counts.

\begin{proposition}
\label{proposition:countpermasperm}
For any $\smalldp \in \doubleposets, \Pi \in \permutations$,
\begin{align*}
    \langle \Profile_{\hom}(\SntoDP(\Pi)), \smalldp \rangle &=   \langle\PC(\Pi),\sum_{\sigma \in \permutations}|\Epi(\smalldp,\SntoDP(\sigma))|\ \sigma \rangle.
\end{align*}
\end{proposition}
\begin{proof}
\begin{align*}
    &\langle \Profile_{\hom}(\SntoDP(\Pi)), \smalldp\rangle \\&\stackrel{\text{\Cref{theorem:lovasz_change_basis}}}{=}   \langle \Profile_{\regmono}(\SntoDP(\Pi)), \Phi_{\regmono \leftarrow \hom }(\smalldp)\rangle\\
    &\stackrel{\text{\Cref{lem:projection-perm}}}{=} \langle \Profile_{\regmono}(\SntoDP(\Pi)), \proj_{\mathcal{S}}  \circ\; \Phi_{\regmono \leftarrow \hom }(\smalldp)\rangle\\&\stackrel{\text{\Cref{lemma:map_DP_lc_perm_patt}}}{=}   \sum_{\sigma \in \permutations} |\Epi(\smalldp,\SntoDP(\sigma))| \langle \Profile_{\regmono}(\SntoDP(\Pi)), \SntoDP(\sigma) \rangle\\
    &\stackrel{\text{\Cref{lem:regmono_perm_patt}}}{=}   \sum_{\sigma \in \permutations}|\Epi(\smalldp,\SntoDP(\sigma))|\langle\PC(\Pi),\sigma \rangle
\end{align*}
\end{proof}

\begin{remark}
\label{remark:ct_lc}
To compare with \cite{even2021counting}, denote a corner tree with $T$. There is a one-to-one correspondence between the occurrences of a corner tree $T$ on a permutation $\sigma$ and the morphisms between the corresponding twin tree double poset $\smallct:=\SNpolytoSDP(\CTtoSNpoly(T))$  and $\SntoDP(\sigma)$. If we consider the \underline{underlying set maps}, see \Cref{proposition:occ_as_DP}, they coincide and therefore $\#T(\sigma)$, as written in \cite{even2021counting}, is equal to 
\begin{align*}
  \langle \Profile_{\hom}(\SntoDP(\sigma)), \smallct\rangle
\end{align*}
in our setting.
Using \Cref{proposition:countpermasperm}, we have
\begin{align*}
   \langle \Profile_{\hom}(\SntoDP(\sigma)), \smallct \rangle &=  \sum_{\tau\in \permutations}|\Epi(\smallct,\SntoDP(\tau))|\langle\PC(\sigma),\tau \rangle
\end{align*}
which is equivalent to the equation of \cite[Lemma 2.1]{even2021counting}. In \cite{diehl_verri_generalization_2025}, the function {\algfont expand\_patterns} applied to a corner tree $\CT$ yields
\begin{align*}
     \sum_{\sigma \in \permutations}|\Epi(\CTtoDP(\CT),\SntoDP(\sigma))|\,\sigma.
\end{align*}
\end{remark}

\section{Generalization of the $[\mathtt{3\,2\,1\,4}]$ algorithm}
\label{section:gen_algo}
This section is organized as follows.
In \Cref{subsection:family_dp_algo}, we define the family of tree double posets on which our $\tilde{\mathcal{O}}(n^{5/3})$ algorithm works. We then illustrate properties specific to this family that allow for the algorithm implementation. \Cref{subsection:algo_occurrences} revisits \Cref{algorithm_occurrences} from \cite{even2021counting}, which counts corner tree occurrences in $\tilde{\mathcal{O}}(n)$ time. We then show that this algorithm can be reformulated as \Cref{algorithm_pure_west} for the case of a pure west corner tree. Our algorithm is presented in \Cref{subsection:algo_arbone}. Finally, in \Cref{subsection:new_dir_level5}, we illustrate how twelve elements of this family of double posets span new directions at level five.

\subsection{Family of double posets for which the algorithm works: $\ArboNE$}
\label{subsection:family_dp_algo}

We refer to \cite[Section 4]{even2021counting} where an algorithm designed to count occurrences of the pattern $\perm{3214}$ in $\tilde{\mathcal{O}}(n^{5/3})$ is introduced. Here we extend it to a certain family of double posets. The main building block is the family of corner trees, $\CornerTreesfivethree$, from which the family of double posets to which the algorithm applies is constructed, $\ArboNE$.

\begin{definition}[$\CornerTreesfivethree$]
\label{definition:corner_algo}
Consider the family of corner trees constructed as follows. Denote with $\ME$ (``most east'') the root of the corner tree, and then consider the case where $\ME$ has one child, denoted with 
 $\MN$ (``most north''), with an edge labeled $\NW$, or the case where $\MN$ is the child of the child of $\ME$ and both edges are labeled $\NW$
\begin{figure}[H]
\begin{minipage}{0.49\textwidth}
   \centering
        \scalebox{0.25}{
\begin{tikzpicture}[every node/.style={circle, draw, inner sep=4pt, minimum size=6mm}]
\node (A) at (-2,0) {\scalebox{3}{$\ME$}};
\node (C) at (6,-8) {\scalebox{3}{$\MN$}};
    \tikzset{mid arrow/.style={
        postaction={decorate,decoration={
            markings,
            mark=at position .5 with {\arrow[scale=3]{stealth}}
        }}
    }}
    \draw (C) -- (A) node[draw = none, pos=0.5, above, yshift=10pt] {\scalebox{2}{\NW}};
\end{tikzpicture}}
\end{minipage}
\begin{minipage}{0.49\textwidth}
   \centering
        \scalebox{0.25}{
\begin{tikzpicture}[every node/.style={circle, draw, inner sep=4pt, minimum size=6mm}]
\node (A) at (-2,0) {\scalebox{3}{$\ME$}};
\node (B) at (2,-4) {\phantom{\scalebox{2}{$\MN$}}};
\node (C) at (6,-8) {\scalebox{3}{$\MN$}};
    \tikzset{mid arrow/.style={
        postaction={decorate,decoration={
            markings,
            mark=at position .5 with {\arrow[scale=3]{stealth}}
        }}
    }}
    \draw (B) -- (A) node[draw = none, pos=0.2, above, yshift=10pt] {\scalebox{2}{\NW}};
    \draw (C) -- (B) node[draw = none, pos=0.2, above, yshift=10pt] {\scalebox{2}{\NW}};
\end{tikzpicture}}
\end{minipage}
\end{figure}
Then we can attach an arbitrary number of children as long as the label of the edges is $\SW$
\begin{figure}[H]
\begin{minipage}{0.49\textwidth}
   \centering
        \scalebox{0.25}{
\begin{tikzpicture}[every node/.style={circle, draw, inner sep=4pt, minimum size=6mm}]
\node (A) at (-2,0) {\scalebox{3}{$\ME$}};
\node (C) at (6,-8) {\scalebox{3}{$\MN$}};
\node (D) at (-2,-4) {\phantom{\scalebox{2}{$\MN$}}};
\node (E) at (-6,-4) {\phantom{\scalebox{2}{$\MN$}}};
\node (F) at (-6,-8) {\phantom{\scalebox{2}{$\MN$}}};
\node (G) at (-10,-8) {\phantom{\scalebox{2}{$\MN$}}};
\node (K) at (6,-12) {\phantom{\scalebox{2}{$\MN$}}};
\node (L) at (10,-12) {\phantom{\scalebox{2}{$\MN$}}};
    \tikzset{mid arrow/.style={
        postaction={decorate,decoration={
            markings,
            mark=at position .5 with {\arrow[scale=3]{stealth}}
        }}
    }}
      \draw (C) -- (A) node[draw = none, pos=0.5, above, yshift=10pt] {\scalebox{2}{\NW}};
       \draw (E) -- (A) node[draw = none, pos=0.2, above, yshift=10pt] {\scalebox{2}{\SW}};
             \draw (F) -- (E) node[draw = none, pos=0.3, right, yshift=10pt] {\scalebox{2}{\SW}};
              \draw (G) -- (E) node[draw = none, pos=0.2, above, yshift=10pt] {\scalebox{2}{\SW}};
               \draw (D) -- (A) node[draw = none, pos=0.2, left, yshift=10pt] {\scalebox{2}{\SW}};
                    \draw (C) -- (K) node[draw = none, pos=0.7, left, yshift=10pt] {\scalebox{2}{\SW}};
                     \draw (C) -- (L) node[draw = none, pos=0.8, above, yshift=10pt] {\scalebox{2}{\SW}};
\end{tikzpicture}}
\end{minipage}
\begin{minipage}{0.49\textwidth}
   \centering
        \scalebox{0.25}{
\begin{tikzpicture}[every node/.style={circle, draw, inner sep=4pt, minimum size=6mm}]
\node (A) at (-2,0) {\scalebox{3}{$\ME$}};
\node (B) at (2,-4) {\phantom{\scalebox{2}{$\MN$}}};
\node (C) at (6,-8) {\scalebox{3}{$\MN$}};
\node (D) at (-2,-4) {\phantom{\scalebox{2}{$\MN$}}};
\node (E) at (-6,-4) {\phantom{\scalebox{2}{$\MN$}}};
\node (F) at (-6,-8) {\phantom{\scalebox{2}{$\MN$}}};
\node (G) at (-10,-8) {\phantom{\scalebox{2}{$\MN$}}};
\node (H) at (2,-8) {\phantom{\scalebox{2}{$\MN$}}};
\node (I) at (-2,-8) {\phantom{\scalebox{2}{$\MN$}}};
\node (J) at (-2,-12) {\phantom{\scalebox{2}{$\MN$}}};
\node (K) at (6,-12) {\phantom{\scalebox{2}{$\MN$}}};
\node (L) at (10,-12) {\phantom{\scalebox{2}{$\MN$}}};
    \tikzset{mid arrow/.style={
        postaction={decorate,decoration={
            markings,
            mark=at position .5 with {\arrow[scale=3]{stealth}}
        }}
    }}
    \draw (B) -- (A) node[draw = none, pos=0.2, above, yshift=10pt] {\scalebox{2}{\NW}};
    \draw (C) -- (B) node[draw = none, pos=0.2, above, yshift=10pt] {\scalebox{2}{\NW}};
       \draw (E) -- (A) node[draw = none, pos=0.2, above, yshift=10pt] {\scalebox{2}{\SW}};
             \draw (F) -- (E) node[draw = none, pos=0.3, right, yshift=10pt] {\scalebox{2}{\SW}};
              \draw (G) -- (E) node[draw = none, pos=0.2, above, yshift=10pt] {\scalebox{2}{\SW}};
               \draw (D) -- (A) node[draw = none, pos=0.2, left, yshift=10pt] {\scalebox{2}{\SW}};
               \draw (B) -- (H) node[draw = none, pos=0.7, left, yshift=10pt] {\scalebox{2}{\SW}};
                \draw (B) -- (I) node[draw = none, pos=0.5, left, yshift=10pt] {\scalebox{2}{\SW}};
                   \draw (H) -- (J) node[draw = none, pos=0.6, above, yshift=10pt] {\scalebox{2}{\SW}};
                    \draw (C) -- (K) node[draw = none, pos=0.7, left, yshift=10pt] {\scalebox{2}{\SW}};
                     \draw (C) -- (L) node[draw = none, pos=0.8, above, yshift=10pt] {\scalebox{2}{\SW}};
\end{tikzpicture}}
\end{minipage}
\end{figure}
The nodes $\ME$ and $\MN$ are to the east and to the north, respectively, of any other node.
\end{definition}
We now introduce the family of double posets to which the algorithm applies.
\begin{definition}[$\ArboNE$]
\label{definition:algo_guys}
Consider the family of double posets constructed as follows. First, consider the twin tree double posets that underlie the family of corner trees, $\CornerTreesfivethree$, from \Cref{definition:corner_algo}
\begin{figure}[H]
\begin{minipage}{0.49\textwidth}
   \centering
        \scalebox{0.25}{
\begin{tikzpicture}[every node/.style={circle, draw, inner sep=4pt, minimum size=6mm}]
\node (A) at (-2,0) {\scalebox{3}{$\ME$}};
\node (C) at (6,-8) {\scalebox{3}{$\MN$}};
\node (D) at (-2,-4) {\phantom{\scalebox{2}{$\MN$}}};
\node (E) at (-6,-4) {\phantom{\scalebox{2}{$\MN$}}};
\node (F) at (-6,-8) {\phantom{\scalebox{2}{$\MN$}}};
\node (G) at (-10,-8) {\phantom{\scalebox{2}{$\MN$}}};
\node (K) at (6,-12) {\phantom{\scalebox{2}{$\MN$}}};
\node (L) at (10,-12) {\phantom{\scalebox{2}{$\MN$}}};
    \tikzset{mid arrow/.style={
        postaction={decorate,decoration={
            markings,
            mark=at position .5 with {\arrow[scale=3]{stealth}}
        }}
    }}
    \draw[-{Stealth[scale=3]}] (A) -- (C) node[draw = none, pos=0.4, above, yshift=10pt] {\scalebox{2}{\N}};
       \draw[-{Stealth[scale=3]}] (A) -- (E) node[draw = none, pos=0.4, above, yshift=10pt] {\scalebox{2}{\S}};
             \draw[-{Stealth[scale=3]}] (E) -- (F) node[draw = none, pos=0.3, right, yshift=10pt] {\scalebox{2}{\S}};
              \draw[-{Stealth[scale=3]}] (E) -- (G) node[draw = none, pos=0.2, above, yshift=10pt] {\scalebox{2}{\S}};
               \draw[-{Stealth[scale=3]}] (A) -- (D) node[draw = none, pos=0.5, left, yshift=10pt] {\scalebox{2}{\S}};
                  \draw[-{Stealth[scale=3]}] (C) -- (K) node[draw = none, pos=0.7, left, yshift=10pt] {\scalebox{2}{\S}};
                     \draw[-{Stealth[scale=3]}] (C) -- (L) node[draw = none, pos=0.8, above, yshift=10pt] {\scalebox{2}{\S}};
\end{tikzpicture}}
\end{minipage}
\begin{minipage}{0.49\textwidth}
   \centering
        \scalebox{0.25}{
\begin{tikzpicture}[every node/.style={circle, draw, inner sep=4pt, minimum size=6mm}]
\node (A) at (-2,0) {\scalebox{3}{$\ME$}};
\node (B) at (2,-4) {\phantom{\scalebox{2}{$\MN$}}};
\node (C) at (6,-8) {\scalebox{3}{$\MN$}};
\node (D) at (-2,-4) {\phantom{\scalebox{2}{$\MN$}}};
\node (E) at (-6,-4) {\phantom{\scalebox{2}{$\MN$}}};
\node (F) at (-6,-8) {\phantom{\scalebox{2}{$\MN$}}};
\node (G) at (-10,-8) {\phantom{\scalebox{2}{$\MN$}}};
\node (H) at (2,-8) {\phantom{\scalebox{2}{$\MN$}}};
\node (I) at (-2,-8) {\phantom{\scalebox{2}{$\MN$}}};
\node (J) at (-2,-12) {\phantom{\scalebox{2}{$\MN$}}};
\node (K) at (6,-12) {\phantom{\scalebox{2}{$\MN$}}};
\node (L) at (10,-12) {\phantom{\scalebox{2}{$\MN$}}};
    \tikzset{mid arrow/.style={
        postaction={decorate,decoration={
            markings,
            mark=at position .5 with {\arrow[scale=3]{stealth}}
        }}
    }}
    \draw[-{Stealth[scale=3]}] (A) -- (B) node[draw = none, pos=0.4, above, yshift=10pt] {\scalebox{2}{\N}};
    \draw[-{Stealth[scale=3]}] (B) -- (C) node[draw = none, pos=0.4, above, yshift=10pt] {\scalebox{2}{\N}};
       \draw[-{Stealth[scale=3]}] (A) -- (E) node[draw = none, pos=0.4, above, yshift=10pt] {\scalebox{2}{\S}};
             \draw[-{Stealth[scale=3]}] (E) -- (F) node[draw = none, pos=0.3, right, yshift=10pt] {\scalebox{2}{\S}};
              \draw[-{Stealth[scale=3]}] (E) -- (G) node[draw = none, pos=0.2, above, yshift=10pt] {\scalebox{2}{\S}};
               \draw[-{Stealth[scale=3]}] (A) -- (D) node[draw = none, pos=0.5, left, yshift=10pt] {\scalebox{2}{\S}};
               \draw[-{Stealth[scale=3]}] (B) -- (H) node[draw = none, pos=0.7, left, yshift=10pt] {\scalebox{2}{\S}};
                \draw[-{Stealth[scale=3]}] (B) -- (I) node[draw = none, pos=0.5, left, yshift=10pt] {\scalebox{2}{\S}};
                   \draw[-{Stealth[scale=3]}] (H) -- (J) node[draw = none, pos=0.6, above, yshift=10pt] {\scalebox{2}{\S}};
                    \draw[-{Stealth[scale=3]}] (C) -- (K) node[draw = none, pos=0.7, left, yshift=10pt] {\scalebox{2}{\S}};
                     \draw[-{Stealth[scale=3]}] (C) -- (L) node[draw = none, pos=0.8, above, yshift=10pt] {\scalebox{2}{\S}};
\end{tikzpicture}}
\end{minipage}
\end{figure}
Then attach a vertex, $\MAX$, which is maximal for both the west and the south poset
\begin{figure}[H]
\begin{minipage}{0.49\textwidth}
   \centering
        \scalebox{0.25}{
\begin{tikzpicture}[every node/.style={circle, draw, inner sep=4pt, minimum size=6mm}]
\node (X) at (8,4) {\scalebox{3}{$\MAX$}};
\node (A) at (-2,0) {\scalebox{3}{$\ME$}};
\node (C) at (6,-8) {\scalebox{3}{$\MN$}};
\node (D) at (-2,-4) {\phantom{\scalebox{2}{$\MN$}}};
\node (E) at (-6,-4) {\phantom{\scalebox{2}{$\MN$}}};
\node (F) at (-6,-8) {\phantom{\scalebox{2}{$\MN$}}};
\node (G) at (-10,-8) {\phantom{\scalebox{2}{$\MN$}}};
\node (K) at (6,-12) {\phantom{\scalebox{2}{$\MN$}}};
\node (L) at (10,-12) {\phantom{\scalebox{2}{$\MN$}}};
    \tikzset{mid arrow/.style={
        postaction={decorate,decoration={
            markings,
            mark=at position .5 with {\arrow[scale=3]{stealth}}
        }}
    }}
       \draw[-{Stealth[scale=3]}] (X) -- (A) node[draw = none, pos=0.4, above, yshift=10pt] {\scalebox{2}{\S}};
         \draw[-{Stealth[scale=3]}] (X) -- (C) node[draw = none, pos=0.4, right,xshift=3pt, yshift=10pt] {\scalebox{2}{\S}};
    \draw[-{Stealth[scale=3]}] (A) -- (C) node[draw = none, pos=0.4, right, yshift=10pt] {\scalebox{2}{\N}};
       \draw[-{Stealth[scale=3]}] (A) -- (E) node[draw = none, pos=0.4, above, yshift=10pt] {\scalebox{2}{\S}};
             \draw[-{Stealth[scale=3]}] (E) -- (F) node[draw = none, pos=0.3, right, yshift=10pt] {\scalebox{2}{\S}};
              \draw[-{Stealth[scale=3]}] (E) -- (G) node[draw = none, pos=0.2, above, yshift=10pt] {\scalebox{2}{\S}};
               \draw[-{Stealth[scale=3]}] (A) -- (D) node[draw = none, pos=0.5, left, yshift=10pt] {\scalebox{2}{\S}};
                  \draw[-{Stealth[scale=3]}] (C) -- (K) node[draw = none, pos=0.7, left, yshift=10pt] {\scalebox{2}{\S}};
                     \draw[-{Stealth[scale=3]}] (C) -- (L) node[draw = none, pos=0.8, above, yshift=10pt] {\scalebox{2}{\S}};
\end{tikzpicture}}
\end{minipage}
\begin{minipage}{0.49\textwidth}
   \centering
        \scalebox{0.25}{
\begin{tikzpicture}[every node/.style={circle, draw, inner sep=4pt, minimum size=6mm}]
\node (X) at (8,4) {\scalebox{3}{$\MAX$}};
\node (A) at (-2,0) {\scalebox{3}{$\ME$}};
\node (B) at (2,-4) {\phantom{\scalebox{2}{$\MN$}}};
\node (C) at (6,-8) {\scalebox{3}{$\MN$}};
\node (D) at (-2,-4) {\phantom{\scalebox{2}{$\MN$}}};
\node (E) at (-6,-4) {\phantom{\scalebox{2}{$\MN$}}};
\node (F) at (-6,-8) {\phantom{\scalebox{2}{$\MN$}}};
\node (G) at (-10,-8) {\phantom{\scalebox{2}{$\MN$}}};
\node (H) at (2,-8) {\phantom{\scalebox{2}{$\MN$}}};
\node (I) at (-2,-8) {\phantom{\scalebox{2}{$\MN$}}};
\node (J) at (-2,-12) {\phantom{\scalebox{2}{$\MN$}}};
\node (K) at (6,-12) {\phantom{\scalebox{2}{$\MN$}}};
\node (L) at (10,-12) {\phantom{\scalebox{2}{$\MN$}}};
    \tikzset{mid arrow/.style={
        postaction={decorate,decoration={
            markings,
            mark=at position .5 with {\arrow[scale=3]{stealth}}
        }}
    }}
        \draw[-{Stealth[scale=3]}] (X) -- (A) node[draw = none, pos=0.4, above, yshift=10pt] {\scalebox{2}{\S}};
        \draw[-{Stealth[scale=3]}] (X) -- (B) node[draw = none, pos=0.4, above, yshift=10pt] {\scalebox{2}{\S}};
         \draw[-{Stealth[scale=3]}] (X) -- (C) node[draw = none, pos=0.4, right,xshift=3pt, yshift=10pt] {\scalebox{2}{\S}};
    \draw[-{Stealth[scale=3]}] (A) -- (B) node[draw = none, pos=0.4, above, yshift=10pt] {\scalebox{2}{\N}};
    \draw[-{Stealth[scale=3]}] (B) -- (C) node[draw = none, pos=0.4, above, yshift=10pt] {\scalebox{2}{\N}};
       \draw[-{Stealth[scale=3]}] (A) -- (E) node[draw = none, pos=0.4, above, yshift=10pt] {\scalebox{2}{\S}};
             \draw[-{Stealth[scale=3]}] (E) -- (F) node[draw = none, pos=0.3, right, yshift=10pt] {\scalebox{2}{\S}};
              \draw[-{Stealth[scale=3]}] (E) -- (G) node[draw = none, pos=0.2, above, yshift=10pt] {\scalebox{2}{\S}};
               \draw[-{Stealth[scale=3]}] (A) -- (D) node[draw = none, pos=0.5, left, yshift=10pt] {\scalebox{2}{\S}};
               \draw[-{Stealth[scale=3]}] (B) -- (H) node[draw = none, pos=0.7, left, yshift=10pt] {\scalebox{2}{\S}};
                \draw[-{Stealth[scale=3]}] (B) -- (I) node[draw = none, pos=0.5, left, yshift=10pt] {\scalebox{2}{\S}};
                   \draw[-{Stealth[scale=3]}] (H) -- (J) node[draw = none, pos=0.6, above, yshift=10pt] {\scalebox{2}{\S}};
                    \draw[-{Stealth[scale=3]}] (C) -- (K) node[draw = none, pos=0.7, left, yshift=10pt] {\scalebox{2}{\S}};
                     \draw[-{Stealth[scale=3]}] (C) -- (L) node[draw = none, pos=0.8, above, yshift=10pt] {\scalebox{2}{\S}};
\end{tikzpicture}}
\end{minipage}
\end{figure}
We denote the set of equivalence classes of these double posets with $\ArboNE$. Observe that given $\arboNE \in \ArboNE$, we have $\arboNE \in \treedoubleposets$ but $\arboNE \not \in \twintreedoubleposets$.
\end{definition}

Notice that the examples above do not represent pairs of Hasse diagrams, but the respective transitive closures do represent double posets. The main result of this section is \Cref{theorem:gen3214}, i.e. counting $|\Hom(\arboNE,\SntoDP(\Pi))|$ is feasible in time $\tilde{\mathcal O}(n^{5/3})$ and space $\tilde\O(n)$.
We now describe some properties of the elements of $\ArboNE$. 
\begin{remark}
Let $\arboNE \in \ArboNE$. Consider $\arboNE|_{V(\arboNE)\setminus \{\MAX\}}\in \twintreedoubleposets$, i.e., the double poset obtained by removing $\MAX$ and its incident edges. Then, by construction, rooting it in $\ME$ yields
\begin{align*}
\TTDPtoCT(\arboNE|_{V(\arboNE)\setminus \{\MAX\}},\ME)\in \CornerTreesfivethree
\end{align*}
where the only edge labels are either $\SW$ or $\NW$. This property is essential for the algorithm that efficiently counts $|\Hom(\arboNE,\SntoDP(\Pi))|$.
\end{remark}

\begin{remark}
\label{remark:what_counts}
Let $\arboNE \in \ArboNE$. Now consider $\arboNE|_{V(\arboNE)\setminus \{\MAX\}}$, i.e. the twin tree double poset obtained by removing $\MAX$ and its incident edges. If $\arboNE|_{V(\arboNE)\setminus \{\MAX\}}$
counts the linear combination of patterns
\begin{align*}
\DPtoSn\left(\proj_{\mathcal{S}}\left(\Phi_{\regmono \leftarrow \hom}\left(\arboNE|_{V(\arboNE)\setminus \{\MAX\}}\right)\right)\right) &= \sum_{n \ge 0}\sum_{\sigma \in \permutations(n)}c_{\sigma}[\sigma(1)\cdots\sigma(n)],
\end{align*}
where $c_{\sigma} \in \NN_{\ge 1}$, then $\arboNE$ counts
\begin{align*}
\DPtoSn\left(\proj_{\mathcal{S}}\left(\Phi_{\regmono \leftarrow \hom}(\arboNE)\right)\right) &= \sum_{n \ge 0}\sum_{\sigma \in \permutations(n)}c_{\sigma}[\sigma(1)\cdots\sigma(n) \, n+1].
\end{align*}

As an example, given $\arboNE = \raisebox{-1.2cm}{\scalebox{0.2}{
\begin{tikzpicture}[every node/.style={circle, draw, inner sep=4pt, minimum size=6mm}]
\node (X) at (5,3) {\scalebox{3}{$\MAX$}};
\node (A) at (-2,0) {\scalebox{3}{$\ME$}};
\node (C) at (3,-4) {\scalebox{3}{$\MN$}};
\node (D) at (-2,-4) {\phantom{\scalebox{2}{$\MN$}}};
\node (K) at (3,-8) {\phantom{\scalebox{2}{$\MN$}}};
    \tikzset{mid arrow/.style={
        postaction={decorate,decoration={
            markings,
            mark=at position .5 with {\arrow[scale=3]{stealth}}
        }}
    }}
       \draw[-{Stealth[scale=3]}] (X) -- (A) node[draw = none, pos=0.4, above, yshift=10pt] {\scalebox{3}{\S}};
         \draw[-{Stealth[scale=3]}] (X) -- (C) node[draw = none, pos=0.4, right,xshift=3pt, yshift=10pt] {\scalebox{3}{\S}};
    \draw[-{Stealth[scale=3]}] (A) -- (C) node[draw = none, pos=0.4, right, yshift=10pt] {\scalebox{3}{\N}};
      \draw[-{Stealth[scale=3]}] (A) -- (D) node[draw = none, pos=0.5, left, yshift=10pt] {\scalebox{3}{\S}};
                  \draw[-{Stealth[scale=3]}] (C) -- (K) node[draw = none, pos=0.7, left, yshift=10pt] {\scalebox{3}{\S}};
                     \end{tikzpicture}}}$, we have
\begin{align*}
\DPtoSn\left(\proj_{\mathcal{S}}\left(\Phi_{\regmono \leftarrow \hom}(\arboNE)\right)\right) & = \perm{1324} + 2\perm{12435} + \perm{13425} + \perm{14235} \\&+ 2\perm{21435} + \perm{24135} +\perm{31425} + \perm{34125}.
\end{align*}
since         
\begin{align*}
\DPtoSn\left(\proj_{\mathcal{S}}\left( \Phi_{\regmono \leftarrow \hom}\left(\arboNE|_{V(\arboNE)\setminus \{\MAX\}}\right)\right)\right) & = \perm{132} + 2\perm{1243} + \perm{1342} + \perm{1423} \\&+ 2\perm{2143} + \perm{2413} +\perm{3142} + \perm{3412}.
\end{align*}
\end{remark}

\begin{remark}
\label{remark:swaptalgo}
Let $\smalldp := \triplesmalldp$ be a double poset. Define
\begin{align*}
\Swap(\smalldp) &:= (A,Q_{A},P_{A}).
\end{align*}
   If $\arboNE \in \ArboNE$, then $\Swap(\arboNE) \in \ArboNE$.

\begin{figure}[H]
\centering
\begin{minipage}{0.3\textwidth}
        \centering
      \scalebox{0.23}{\begin{tikzpicture}[every node/.style={circle, draw, inner sep=4pt, minimum size=6mm}]
\node (X) at (8,4) {\scalebox{3}{$\MAX$}};
\node (A) at (-2,0) {\scalebox{3}{$\ME$}};
\node (B) at (2,-4) {\phantom{\scalebox{2}{$\MN$}}};
\node (C) at (6,-8) {\scalebox{3}{$\MN$}};
\node (D) at (-2,-4) {\phantom{\scalebox{2}{$\MN$}}};
\node (E) at (-6,-4) {\phantom{\scalebox{2}{$\MN$}}};
\node (F) at (-6,-8) {\phantom{\scalebox{2}{$\MN$}}};
\node (G) at (-10,-8) {\phantom{\scalebox{2}{$\MN$}}};
\node (H) at (2,-8) {\phantom{\scalebox{2}{$\MN$}}};
\node (I) at (-2,-8) {\phantom{\scalebox{2}{$\MN$}}};
\node (J) at (-2,-12) {\phantom{\scalebox{2}{$\MN$}}};
\node (K) at (6,-12) {\phantom{\scalebox{2}{$\MN$}}};
\node (L) at (10,-12) {\phantom{\scalebox{2}{$\MN$}}};
\node[draw=none] (K1) at (6,-16) {\phantom{\scalebox{2}{$\MN$}}};
\node[draw=none] (L1) at (10,-16) {\phantom{\scalebox{2}{$\MN$}}};
    \tikzset{mid arrow/.style={
        postaction={decorate,decoration={
            markings,
            mark=at position .5 with {\arrow[scale=3]{stealth}}
        }}
    }}
        \draw[-{Stealth[scale=3]}] (X) -- (A) node[draw = none, pos=0.4, above, yshift=10pt] {\scalebox{2}{\S}};
        \draw[-{Stealth[scale=3]}] (X) -- (B) node[draw = none, pos=0.4, above, yshift=10pt] {\scalebox{2}{\S}};
         \draw[-{Stealth[scale=3]}] (X) -- (C) node[draw = none, pos=0.4, right,xshift=3pt, yshift=10pt] {\scalebox{2}{\S}};
    \draw[-{Stealth[scale=3]}] (A) -- (B) node[draw = none, pos=0.4, above, yshift=10pt] {\scalebox{2}{\N}};
    \draw[-{Stealth[scale=3]}] (B) -- (C) node[draw = none, pos=0.4, above, yshift=10pt] {\scalebox{2}{\N}};
       \draw[-{Stealth[scale=3]}] (A) -- (E) node[draw = none, pos=0.4, above, yshift=10pt] {\scalebox{2}{\S}};
             \draw[-{Stealth[scale=3]}] (E) -- (F) node[draw = none, pos=0.3, right, yshift=10pt] {\scalebox{2}{\S}};
              \draw[-{Stealth[scale=3]}] (E) -- (G) node[draw = none, pos=0.2, above, yshift=10pt] {\scalebox{2}{\S}};
               \draw[-{Stealth[scale=3]}] (A) -- (D) node[draw = none, pos=0.5, left, yshift=10pt] {\scalebox{2}{\S}};
               \draw[-{Stealth[scale=3]}] (B) -- (H) node[draw = none, pos=0.7, left, yshift=10pt] {\scalebox{2}{\S}};
                \draw[-{Stealth[scale=3]}] (B) -- (I) node[draw = none, pos=0.5, left, yshift=10pt] {\scalebox{2}{\S}};
                   \draw[-{Stealth[scale=3]}] (H) -- (J) node[draw = none, pos=0.6, above, yshift=10pt] {\scalebox{2}{\S}};
                    \draw[-{Stealth[scale=3]}] (C) -- (K) node[draw = none, pos=0.7, left, yshift=10pt] {\scalebox{2}{\S}};
                     \draw[-{Stealth[scale=3]}] (C) -- (L) node[draw = none, pos=0.8, above, yshift=10pt] {\scalebox{2}{\S}};
\end{tikzpicture}}
         \caption*{$\arboNE$}
    \end{minipage}\hfill
   \begin{minipage}{0.3\textwidth}
        \centering
      \scalebox{0.23}{\begin{tikzpicture}[every node/.style={circle, draw, inner sep=4pt, minimum size=6mm}]
\node (X) at (8,4) {\scalebox{3}{$\MAX$}};
\node (A) at (-2,0) {\scalebox{3}{$\ME$}};
\node (B) at (2,-4) {\phantom{\scalebox{2}{$\MN$}}};
\node (C) at (6,-8) {\scalebox{3}{$\MN$}};
\node (D) at (-2,-4) {\phantom{\scalebox{2}{$\MN$}}};
\node (E) at (-6,-4) {\phantom{\scalebox{2}{$\MN$}}};
\node (F) at (-6,-8) {\phantom{\scalebox{2}{$\MN$}}};
\node (G) at (-10,-8) {\phantom{\scalebox{2}{$\MN$}}};
\node (H) at (2,-8) {\phantom{\scalebox{2}{$\MN$}}};
\node (I) at (-2,-8) {\phantom{\scalebox{2}{$\MN$}}};
\node (J) at (-2,-12) {\phantom{\scalebox{2}{$\MN$}}};
\node (K) at (6,-12) {\phantom{\scalebox{2}{$\MN$}}};
\node (L) at (10,-12) {\phantom{\scalebox{2}{$\MN$}}};
\node[draw=none] (K1) at (6,-16) {\phantom{\scalebox{2}{$\MN$}}};
\node[draw=none] (L1) at (10,-16) {\phantom{\scalebox{2}{$\MN$}}};
    \tikzset{mid arrow/.style={
        postaction={decorate,decoration={
            markings,
            mark=at position .5 with {\arrow[scale=3]{stealth}}
        }}
    }}
        \draw[-{Stealth[scale=3]}] (X) -- (A) node[draw = none, pos=0.4, above, yshift=10pt] {\scalebox{2}{\S}};
        \draw[-{Stealth[scale=3]}] (X) -- (B) node[draw = none, pos=0.4, above, yshift=10pt] {\scalebox{2}{\S}};
         \draw[-{Stealth[scale=3]}] (X) -- (C) node[draw = none, pos=0.4, right,xshift=3pt, yshift=10pt] {\scalebox{2}{\S}};
    \draw[-{Stealth[scale=3]}] (B) -- (A) node[draw = none, pos=0.4, above, yshift=10pt] {\scalebox{2}{\N}};
    \draw[-{Stealth[scale=3]}] (C) -- (B) node[draw = none, pos=0.4, above, yshift=10pt] {\scalebox{2}{\N}};
       \draw[-{Stealth[scale=3]}] (A) -- (E) node[draw = none, pos=0.4, above, yshift=10pt] {\scalebox{2}{\S}};
             \draw[-{Stealth[scale=3]}] (E) -- (F) node[draw = none, pos=0.3, right, yshift=10pt] {\scalebox{2}{\S}};
              \draw[-{Stealth[scale=3]}] (E) -- (G) node[draw = none, pos=0.2, above, yshift=10pt] {\scalebox{2}{\S}};
               \draw[-{Stealth[scale=3]}] (A) -- (D) node[draw = none, pos=0.5, left, yshift=10pt] {\scalebox{2}{\S}};
               \draw[-{Stealth[scale=3]}] (B) -- (H) node[draw = none, pos=0.7, left, yshift=10pt] {\scalebox{2}{\S}};
                \draw[-{Stealth[scale=3]}] (B) -- (I) node[draw = none, pos=0.5, left, yshift=10pt] {\scalebox{2}{\S}};
                   \draw[-{Stealth[scale=3]}] (H) -- (J) node[draw = none, pos=0.6, above, yshift=10pt] {\scalebox{2}{\S}};
                    \draw[-{Stealth[scale=3]}] (C) -- (K) node[draw = none, pos=0.7, left, yshift=10pt] {\scalebox{2}{\S}};
                     \draw[-{Stealth[scale=3]}] (C) -- (L) node[draw = none, pos=0.8, above, yshift=10pt] {\scalebox{2}{\S}};
\end{tikzpicture}}
         \caption*{$\Swap(\arboNE)$}
    \end{minipage}\hfill
\begin{minipage}{0.3\textwidth}
        \centering
      \scalebox{0.23}{\begin{tikzpicture}[every node/.style={circle, draw, inner sep=4pt, minimum size=6mm}]
\node (X) at (8,4) {\scalebox{3}{$\MAX$}};
\node (A) at (-2,0) {\scalebox{3}{$\ME$}};
\node (B) at (2,-4) {\phantom{\scalebox{2}{$\MN$}}};
\node (C) at (6,-8) {\scalebox{3}{$\MN$}};
\node (D) at (-2,-4) {\phantom{\scalebox{2}{$\MN$}}};
\node (E) at (-6,-4) {\phantom{\scalebox{2}{$\MN$}}};
\node (H) at (2,-8) {\phantom{\scalebox{2}{$\MN$}}};
\node (I) at (-2,-8) {\phantom{\scalebox{2}{$\MN$}}};
\node (J) at (-2,-12) {\phantom{\scalebox{2}{$\MN$}}};
\node (K) at (6,-12) {\phantom{\scalebox{2}{$\MN$}}};
\node (L) at (10,-12) {\phantom{\scalebox{2}{$\MN$}}};
\node (K1) at (6,-16) {\phantom{\scalebox{2}{$\MN$}}};
\node (L1) at (10,-16) {\phantom{\scalebox{2}{$\MN$}}};
    \tikzset{mid arrow/.style={
        postaction={decorate,decoration={
            markings,
            mark=at position .5 with {\arrow[scale=3]{stealth}}
        }}
    }}
        \draw[-{Stealth[scale=3]}] (X) -- (A) node[draw = none, pos=0.4, above, yshift=10pt] {\scalebox{2}{\S}};
        \draw[-{Stealth[scale=3]}] (X) -- (B) node[draw = none, pos=0.4, above, yshift=10pt] {\scalebox{2}{\S}};
         \draw[-{Stealth[scale=3]}] (X) -- (C) node[draw = none, pos=0.4, right,xshift=3pt, yshift=10pt] {\scalebox{2}{\S}};
    \draw[-{Stealth[scale=3]}] (A) -- (B) node[draw = none, pos=0.4, above, yshift=10pt] {\scalebox{2}{\N}};
    \draw[-{Stealth[scale=3]}] (B) -- (C) node[draw = none, pos=0.4, above, yshift=10pt] {\scalebox{2}{\N}};
       \draw[-{Stealth[scale=3]}] (A) -- (E) node[draw = none, pos=0.4, above, yshift=10pt] {\scalebox{2}{\S}};
               \draw[-{Stealth[scale=3]}] (A) -- (D) node[draw = none, pos=0.5, left, yshift=10pt] {\scalebox{2}{\S}};
               \draw[-{Stealth[scale=3]}] (B) -- (H) node[draw = none, pos=0.7, left, yshift=10pt] {\scalebox{2}{\S}};
                \draw[-{Stealth[scale=3]}] (B) -- (I) node[draw = none, pos=0.5, left, yshift=10pt] {\scalebox{2}{\S}};
                   \draw[-{Stealth[scale=3]}] (H) -- (J) node[draw = none, pos=0.6, above, yshift=10pt] {\scalebox{2}{\S}};
                    \draw[-{Stealth[scale=3]}] (C) -- (K) node[draw = none, pos=0.7, left, yshift=10pt] {\scalebox{2}{\S}};
                     \draw[-{Stealth[scale=3]}] (C) -- (L) node[draw = none, pos=0.8, above, yshift=10pt] {\scalebox{2}{\S}};
                     \draw[-{Stealth[scale=3]}] (K) -- (K1) node[draw = none, pos=0.8, left, yshift=10pt] {\scalebox{2}{\S}};
                     \draw[-{Stealth[scale=3]}] (K) -- (L1) node[draw = none, pos=0.8, above, yshift=10pt] {\scalebox{2}{\S}};
\end{tikzpicture}}
         \caption*{$\arboNEprime$}
    \end{minipage}
\end{figure}

We denote with $\arboNEprime$ the relabeled version of $\Swap(\arboNE)$ where we exchange the labels $\ME$ and $\MN$ so that the nodes $\ME$ and $\MN$ are again the most east and most north nodes respectively. This property is also essential for the algorithm that efficiently counts $|\Hom(\arboNE,\SntoDP(\Pi))|$.\end{remark}

To count occurrences of $[\mathtt{3\,2\,1\,4}]$ in the algorithm presented in \cite[Section 4]{even2021counting}, the authors distinguish between three types of occurrences. Here we rephrase them in our setting.

\begin{remark}[Morphism types]
\label{remark:algo_cases}
Let $\arboNE \in \ArboNE$ and $\Pi \in \permutations(n)$. Fix an integer $m < n$ and consider the interval partition of $[n]$, $\{\intervalpartI_{i}|i \in I \}$, according to the total order $1 <\cdots <n$ where all blocks $I_{1}<\dots<I_{\lceil n/m \rceil}$ have size $m$ (expect the very last one if $m$ does note exactly divides $n$).
Consider also the interval partition of $[n]$ according to the order $\Pi^{-1}(1) <\cdots <\Pi^{-1}(n)$ induced by $\intervalpartI$ and $\Pi$, given by
\begin{align*}
    \{\intervalpartJ_{j}|j\in J \}:= \{\Pi(\intervalpartI_{i})|i\in I \}
\end{align*}
Consider
the following subsets of $\Hom(\arboNE, \SntoDP(\Pi))$,
\begin{align*}
    &\TypeA(\arboNE, \SntoDP(\Pi)) :=\{f|\;f(\MN) \in \intervalpartJ_{j} \land f(\MAX) \in \intervalpartJ_{j^{\prime}}\;\textup{and}\;j \neq j^{\prime}\}\\
     &\TypeB(\arboNE, \SntoDP(\Pi)) :=\{f|\;f(\ME) \in \intervalpartI_{i} \land f(\MN) \in \intervalpartI_{i^{\prime}}\;\textup{and}\;i \neq i^{\prime}\}\\
    &\TypeBnotA(\arboNE, \SntoDP(\Pi)) :=\{f \in \TypeB|\;f(\MN) \in \intervalpartJ_{j} \land f(\MAX) \in \intervalpartJ_{j^{\prime}}\;\textup{and}\;j = j^{\prime}\}\\
      &\TypeAnotB(\arboNE, \SntoDP(\Pi)) :=\{f \in \TypeA|\;f(\ME) \in \intervalpartI_{i} \land f(\MAX) \in \intervalpartI_{i^{\prime}}\;\textup{and}\;i = i^{\prime}\}\\
    &\TypenotAnotB(\arboNE, \SntoDP(\Pi)) :=\{f|\;f(\ME) \in \intervalpartI_{i} \land f(\MAX) \in \intervalpartI_{i^{\prime}}\;\textup{and}\;i = i^{\prime},\\&\quad \quad \quad \quad \quad \quad \quad \quad \quad \quad \quad \quad \quad \quad \quad \quad \quad \;\, f(\MN) \in \intervalpartJ_{j} \land f(\MAX) \in \intervalpartJ_{j^{\prime}}\;\textup{and}\;j = j^{\prime}\}.
\end{align*}
Then
\begin{align*}
    &\Hom(\arboNE, \SntoDP(\Pi))\\
&=\TypeA(\arboNE, \SntoDP(\Pi)) \sqcup \TypeBnotA(\arboNE, \SntoDP(\Pi)) \sqcup \TypenotAnotB(\arboNE, \SntoDP(\Pi))\\
&=\TypeB(\arboNE, \SntoDP(\Pi)) \sqcup \TypeAnotB(\arboNE, \SntoDP(\Pi)) \sqcup \TypenotAnotB(\arboNE, \SntoDP(\Pi))
\end{align*}
Furthermore we have
\begin{align*}
&|\TypeB(\arboNE, \SntoDP(\Pi))|\\
&|\TypeB(\Swap(\arboNE), \Swap(\SntoDP(\Pi)))|\\
&=|\TypeA(\arboNEprime, \SntoDP(\Pi^{-1}))|,\\
\end{align*}
and similarly
\begin{align*}
&|\TypeBnotA(\arboNE, \SntoDP(\Pi))|\\
&=|\TypeBnotA(\Swap(\arboNE), \Swap(\SntoDP(\Pi)))|\\
&=|\TypeAnotB(\arboNEprime, \SntoDP(\Pi^{-1}))|.
\end{align*}
The previous considerations allow us to write an algorithm that counts
\begin{align*}
     |\Hom(\arboNE, \SntoDP(\Pi))| &= |\TypeA(\arboNE, \SntoDP(\Pi)) |+|\TypeAnotB(\arboNEprime, \SntoDP(\Pi^{-1}))|\\&+|\TypenotAnotB(\arboNE, \SntoDP(\Pi))|.
\end{align*}
\end{remark}

\begin{figure}[H]
    \centering
    \begin{minipage}{0.33\textwidth}
        \centering
        \begin{tikzpicture}[scale=0.4]
            \draw[->] (0,0) -- (11,0) node[right] {};
            \draw[->] (0,0) -- (0,11) node[above] {};
            \foreach \x in {2.5,4.5,...,10.5} {
                \draw[dotted] (\x,0) -- (\x,10.5);
                \draw[dotted] (0,\x) -- (10.5,\x);
            }
            \foreach \x in {1,2,...,10} {
                \node at (\x,-0.5) {\x};
            }

            \foreach \x/\p in {1/7, 2/10, 3/3, 4/1, 5/9, 6/8, 7/5, 8/4, 9/2, 10/6} {
                \node[circle, draw, fill, inner sep=1pt] (\p) at (\x,\p) {};
            }
           
            \node (17) at (1,7) {\;\;\;\;\;\scalebox{0.7}{$\MN$}};;
            \node (17) at (4,1) {\;\;\;\;\;\scalebox{0.7}{$\ME$}};
            \node (17) at (5.1,9.2) {\;\;\;\;\;\scalebox{0.7}{ $\MAX$}};
        \end{tikzpicture}
        \caption*{\TypeA}
    \end{minipage}\hfill
    \begin{minipage}{0.33\textwidth}
        \centering
        \begin{tikzpicture}[scale=0.4]
            \draw[->] (0,0) -- (11,0) node[right] {};
            \draw[->] (0,0) -- (0,11) node[above] {};
            \foreach \x in {2.5,4.5,...,10.5} {
                \draw[dotted] (\x,0) -- (\x,10.5);
                \draw[dotted] (0,\x) -- (10.5,\x);
            }
            \foreach \x in {1,2,...,10} {
                \node at (\x,-0.5) {\x};
            }

            \foreach \x/\p in {1/7, 2/10, 3/3, 4/1, 5/9, 6/8, 7/5, 8/4, 9/2, 10/6} {
                \node[circle, draw, fill, inner sep=1pt] (\p) at (\x,\p) {};
            }
           
            \node (17) at (1,7) {\;\;\;\;\;\scalebox{0.7}{$\MN$}};
            \node (17) at (3,3) {};
            \node (17) at (4,1) {\;\;\;\;\;\scalebox{0.7}{$\ME$}};
            \node (17) at (6,8) {\;\;\;\;\;\scalebox{0.7}{ $\MAX$}};
        \end{tikzpicture}
        \caption*{\TypeBnotA}\end{minipage}\hfill
        \begin{minipage}{0.33\textwidth}
        \centering
        \begin{tikzpicture}[scale=0.4]
            \draw[->] (0,0) -- (11,0) node[right] {};
            \draw[->] (0,0) -- (0,11) node[above] {};
            \foreach \x in {2.5,4.5,...,10.5} {
                \draw[dotted] (\x,0) -- (\x,10.5);
                \draw[dotted] (0,\x) -- (10.5,\x);
            }
            \foreach \x in {1,2,...,10} {
                \node at (\x,-0.5) {\x};
            }

            \foreach \x/\p in {1/7, 2/10, 3/3, 4/1, 5/9, 6/8, 7/5, 8/4, 9/2, 10/6} {
                \node[circle, draw, fill, inner sep=1pt] (\p) at (\x,\p) {};
            }
           
            \node (17) at (7,5) {\;\;\;\;\;\scalebox{0.7}{$\MN$}};
            \node (17) at (8,4){};
            \node (17) at (9,2) {\;\;\;\;\;\scalebox{0.7}{$\ME$}};
            \node (17) at (8.7,6.6) {\;\;\;\;\;\scalebox{0.7}{ $\MAX$}};
        \end{tikzpicture}
        \caption*{\TypenotAnotB}
    \end{minipage}
    \caption*{Example of the three cases}
\end{figure}

\begin{figure}[H]
    \centering
    \makebox[\textwidth]{
        \begin{minipage}{0.5\textwidth}
        \centering
            \begin{tikzpicture}[scale=0.4]
                \draw[->] (0,0) -- (11,0) node[right] {};
                \draw[->] (0,0) -- (0,11) node[above] {};
                \foreach \x in {2.5,4.5,...,10.5} {
                    \draw[dotted] (\x,0) -- (\x,10.5);
                    \draw[dotted] (0,\x) -- (10.5,\x);
                }
                \foreach \x in {1,2,...,10} {
                    \node at (\x,-0.5) {\x};
                }

                \foreach \x/\p in {1/7, 2/10, 3/3, 4/1, 5/9, 6/8, 7/5, 8/4, 9/2, 10/6} {
                    \node[circle, draw, fill, inner sep=1pt] (\p) at (\x,\p) {};
                }
                
                \node (17) at (1,7) {\;\;\;\;\;\scalebox{0.7}{$\textcolor{red}{\MN}$}};
                \node (17) at (3,3) {};
                \node (17) at (4,1) {\;\;\;\;\;\scalebox{0.7}{$\textcolor{green}{\ME}$}};
                \node (17) at (6,8) {\;\;\;\;\;\scalebox{0.7}{ $\textcolor{blue}{\MAX}$}};
            \end{tikzpicture}
    \caption*{$f \in \TypeBnotA(\SntoDP(\arboNE),\SntoDP(\Pi))$}
          \end{minipage}%
        \begin{minipage}{0.5\textwidth}
        \centering
            \begin{tikzpicture}[scale=0.4]
                \draw[->] (0,0) -- (11,0) node[right] {};
                \draw[->] (0,0) -- (0,11) node[above] {};
                \foreach \x in {2.5,4.5,...,10.5} {
                    \draw[dotted] (\x,0) -- (\x,10.5);
                    \draw[dotted] (0,\x) -- (10.5,\x);
                }
                \foreach \x in {1,2,...,10} {
                    \node at (\x,-0.5) {\x};
                }

                \foreach \x/\p in {1/4, 2/9, 3/3, 4/8, 5/7, 6/10, 7/1, 8/6, 9/5, 10/2} {
                    \node[circle, draw, fill, inner sep=1pt] (\p) at (\x,\p) {};
                }
               
                \node (17) at (7,1) {\;\;\;\;\;\scalebox{0.7}{$\textcolor{green}{\ME}$}};
                \node (17) at (3,3) {};
                \node (17) at (1,4) {\;\;\;\;\;\scalebox{0.7}{$\textcolor{red}{\MN}$}};
                \node (17) at (8,6) {\;\;\;\;\;\;\scalebox{0.7}{$\textcolor{blue}{\MAX}$}};
            \end{tikzpicture}\caption*{$\xi(f)\in \TypeAnotB(\SntoDP(\arboNEprime),\SntoDP(\Pi^{-1}))$}
          \end{minipage}
    }
    \caption{One to one correspondence}
    \label{figure:one_to_one}
\end{figure}
 In \Cref{figure:one_to_one}, given $\arboNE=\SntoDP([\mathtt{2}\,\mathtt{1}\,\mathtt{3}])$ and $\Pi = [\mathtt{7}\text{-}\mathtt{10}\text{-}\mathtt{3}\text{-}\mathtt{1}\text{-}\mathtt{9}\text{-}\mathtt{8}\text{-}\mathtt{5}\text{-}\mathtt{4}\text{-}\mathtt{2}\text{-}\mathtt{6}]$ an example of the bijection \begin{align*}
     \xi: \TypeBnotA(\SntoDP(\arboNE),\SntoDP(\Pi))&\to \TypeAnotB(\SntoDP(\arboNEprime),\SntoDP(\Pi^{-1}))\end{align*} is shown where the points are labeled with the respective preimages.
 
\subsection{The algorithm for counting corner tree occurrences}
\label{subsection:algo_occurrences}

We can compute the occurrences of a corner tree on a permutation recursively (from the leaves to the root) relying on a \quot{vertex} and an \quot{edge} function.
\begin{definition}[Vertex and Edge functions]
Let $\Pi \in \permutations(n), \CT \in \CornerTrees, a \in \vertexset(\CT)$. Then, define
\begin{align*}
    \NN^{n} \ni \textup{vertex}(\Pi, \CT, a) := \bigodot_{\substack{e \in E(\CT):\\\\e \,=\,(a,b)}} \textup{edge}(\Pi, \CT, (a,b))
\end{align*}
where $\bigodot$ is the entry-wise product for tuples, and $\textup{edge}(\Pi, \CT, (a,b)) \in  \NN^{n}$ for $ i \in \{1,...,n\}$ is defined as
\begin{align*}
    \textup{edge}(\Pi, \CT, (a,b))[i] &:= \sum_{g(i,j)} \textup{vertex}(\Pi, \CT, b)[j]
\end{align*}
where $g(i, j)$ is defined as 
\begin{itemize}
    \item $g(i, j) = j < i \land \pi(j) < \pi(i)$, if $\edgelabel((a, b)) = \SW$,
    \item $g(i, j) = j < i \land \pi(j) > \pi(i)$, if $\edgelabel((a, b)) = \NW$,
    \item $g(i, j) = j > i \land \pi(j) < \pi(i)$, if $\edgelabel((a, b)) = \SE$,
    \item $g(i, j) = j > i \land \pi(j) > \pi(i)$, if $\edgelabel((a, b)) = \NE$.
\end{itemize}
\end{definition}
Clearly, for a vertex $v \in \vertexset(\CT)$ we have
\begin{align*}
        &\textup{vertex}(\Pi, \CT, v)[i] \\&=     |\{f|f:\vertexset(\CT)\to [n]:\,\,f\,\,\text{is an occurrence of the subtree of $\CT$ rooted at $v$ on}\;\Pi,\;f(v)=i\}|.
\end{align*}
In particular, we have
\begin{align*}
        &\textup{vertex}(\Pi, \CT, \text{root})[i] \\&=     |\{f|f:\vertexset(\CT)\to [n]:\,\,f\,\,\text{is an occurrence  of $\CT$ on}\;\Pi,\;f(\text{root})=i\}|.
\end{align*}
We now recall Theorem 1.1 from \cite{even2021counting}.
\begin{theorem}[\cite{even2021counting}]
\label{theorem:zohar_compl}
Let $\Pi \in \permutations(n)$ and let $\CT\in \CornerTrees$. Then, using \Cref{algorithm_occurrences}, we can count
\begin{align*}
    |\{f|f:\vertexset(\CT)\to [n]:\,\,f\,\,\text{is an occurrence of }\,\CT\,\,\text{on}\,\,\Pi\}|
\end{align*}
in $\tilde{\mathcal O}(n)$ time and $\tilde{\mathcal O}(n)$ space.
\end{theorem}

\begin{algorithm}[H]
\caption{Counting corner tree occurrences \parencite{even2021counting}}
\label{algorithm_occurrences}
\begin{spacing}{1.2}
\begin{algorithmic}[1]
\algfont
\Function{VERTEX}{permutation $\Pi$, vertex $v$} 
    \Comment{{\footnotesize $v$ is the subtree of $\CT$ rooted at $v$}}
      \State $\text{X} \gets \algfont{\textup{ARRAY}}(1, \ldots, 1)$ \Comment{{\footnotesize of size $n = |\,\Pi\,|$}}
        \For{$e \in \text{child-edges}(v)$}
            \State $\text{X}\gets \text{X} \odot \Call{EDGE}{\Pi,e}$ \Comment{{\footnotesize term-wise product of arrays}}
        \EndFor
        \State \Return \algfont{\textup{X}}
    \EndFunction
\Function{EDGE}{permutation $\Pi$, edge $e$}
    
    \State $\text{X}\gets  \Call{VERTEX}{\Pi, \text{child}(e)}$
    \State $\text{Z} \gets \text{ARRAY}(0, \ldots, 0)$ \Comment{{\footnotesize of size $n$}}
    \State $\text{Y} \gets \text{SUM-TREE}(0, \ldots, 0)$ \Comment{{\footnotesize of size $n$}}

    \If{$\edgelabel(e) \in \{\NW, \SW\}$}
        \State $\text{order} \gets [1, \ldots, n]$
    \Else
        \State $\text{order} \gets [n, \ldots, 1]$
    \EndIf

    \For{$i \in \text{order}$}
        \State $\text{Y}[\Pi[i]] \gets \text{X}[i]$
        \State $\text{Z}[i] \gets \begin{cases}
            \text{Y.prefix\_sum}(\Pi[i]), & \text{if } \text{label}(e) \in \{\SE, \SW\} \\
            \text{Y.suffix\_sum}(\Pi[i]), & \text{otherwise}
        \end{cases}$
    \EndFor
    \State \Return Z
  \EndFunction
\end{algorithmic}
\end{spacing}
\end{algorithm}
\begin{remark}
Observe that summing the entries of the tuple {\algfont VERTEX(permutation $\Pi$, root)} will yield the number of occurences. 
In \Cref{algorithm_occurrences}, tuples of lenght $n$, for which we compute strict prefix and suffix sums, are stored as leaves of perfect binary trees\footnote{A perfect binary tree is a binary tree where every internal node has exactly two children, and all leaf nodes are at the same depth or level.} where each parent node stores the sum of its two children. The tree has depth $\lceil\log_{2}(n)\rceil+1$. Updating an entry of a tuple stored this way costs $\lceil\log_{2}(n)\rceil+1$ since it affects the corresponding leaf and its ancestors. To compute a prefix sum at the $i$-th leaf, we look at its left sibling and its ancestors' left siblings (except for the root). This involves $\lceil\log_{2}(n)\rceil$ operations. Similarly, $\lceil\log_{2}(n)\rceil$ operations yield an efficient suffix sum. We refer to the class {\algfont{SumTree}} in \cite{diehl_verri_generalization_2025}. The use of this data structure guarantees a time complexity of $\mathcal{O}(n\log(n))$.
\begin{figure}[H]
\begin{minipage}{0.45\textwidth}
\centering
\scalebox{0.7}{\begin{forest}
  [30
    [14
      [9  [4]
      [5]]
      [5  [2]
      [3]]
    ]
    [16
      [8 [1]
      [7]]
      [8  [2]
      [6]]
    ]
  ]
\end{forest}}
    \end{minipage}
     $\longrightarrow$
 \hspace*{1cm}   \begin{minipage}{0.45\textwidth}
\scalebox{0.7}{\begin{forest}
  [38
    [14
      [9  [4]
      [5]]
      [5  [2]
      [3]]
    ]
    [24
      [8 [1]
      [7]]
      [16  [10]
      [6]]
    ]
  ]
\end{forest}}
\end{minipage}
\caption{Update of an array stored as a sum-tree}
\label{fig:sum_tree}
\end{figure}
\begin{figure}[H]
\centering
\scalebox{0.7}{\begin{forest}
   [30
    [\textcolor{red}{14}
      [9  [4]
      [5]]
      [5  [2]
      [3]]
    ]
    [16
      [\textcolor{red}{8} [1]
      [7]]
      [8  [\textcolor{red}{2}]
      [6]]
    ]
  ]
\end{forest}}
\caption{Strict prefix sum at $i = 8$ equals $24$}
\label{fig:sum_tree}
\end{figure}
\end{remark}

\begin{example}
Let
\begin{align*}
    \CT&=\raisebox{-1.5em}{\scalebox{0.4}{
\begin{forest}
  for tree={
    circle,
    draw,
    minimum size=1.5cm,
    edge={-},
    s sep=20mm,
    l sep=15mm,
  },
  [ \scalebox{2}{a}
    [\scalebox{2}{b}, edge label={node[midway,above,sloped]{{\Large \NW}}}  ]
   ]
\end{forest}}},\quad \Pi = \perm{34251}.
\end{align*}

Below, we illustrate the loop that is run when evaluating $\textup{edge}(\Pi, \CT, (a,b))$. Since the label is $\NW$, we only perform suffix sums. The columns of the tables on the right are the arrays $Y$ and $Z$ at each iteration, from left to right.
The respective column header indicates which entry of the array is updated.
Zeros are omitted for readability.
\begin{figure}[H]
\hspace{-2cm}
    \begin{minipage}{0.5\textwidth}
    \centering
        \begin{tikzpicture}[scale=0.5]
         \node at (1, -0.5) {1};
         \node at (2, -0.5) {2};
        \node at (3, -0.5) {3};
      \node at (4, -0.5) {4};
       \node at (5, -0.5) {5};
        
      \node at (-2.5, 1) {$\Pi(5)=1$};
     \node at (-2.5, 2) {$\Pi(3)=2$};
     \node at (-2.5, 3) {$\Pi(1)=3$};
      \node at (-2.5, 4) {$\Pi(2)=4$};
     \node at (-2.5, 5) {$\Pi(4)=5$};

            \foreach \x/\p in {1/3, 2/4, 3/2, 4/5, 5/1} {
                \fill[black] (\x,\p) circle (4pt);
            }
            
           
        \end{tikzpicture}
    \end{minipage}
    \begin{minipage}{0.5\textwidth}
    \centering
        \begin{tabular}{llllll}
            & $Y[\Pi(1)]$ & $Y[\Pi(2)]$ & $Y[\Pi(3)]$ & $Y[\Pi(4)]$ & $Y[\Pi(5)]$ \\
           \gray{5}    &                     &  &  &  1 & 1  \\
            \gray4    &                     & 1 & 1 & 1 & 1  \\
            \gray3   &           1          & 1 & 1 & 1 & 1  \\
            \gray2    &                     &  & 1 & 1 & 1  \\
           \gray1    &                     &  &  &  &  1 \\
            & $Z[1]$ & $Z[2]$ & $Z[3]$  & $Z[4]$  &  $Z[5]$  \\
            \gray5    &                     &  &  &  &   4\\
            \gray4    &                     &  &  &  &   \\
            \gray3    &                     &  & 2 & 2 &  2  \\
            \gray2    &                     &  &  &  &   \\
            \gray1    &                     &  &  &  &   
        \end{tabular}
    \end{minipage}
\end{figure}

\end{example}
\begin{example}
Let
 \begin{align*}
\CT =\scalebox{0.4}{\raisebox{-1cm}{\begin{forest}
  for tree={
    circle,
    draw,
    minimum size=1.5cm,
    edge={-},
    s sep=20mm,
    l sep=15mm,
  },
  [\scalebox{2}{r} [\scalebox{2}{a}, edge label={node[midway,above,sloped]{\Large\SE}}[\scalebox{2}{b}, edge label={node[midway,above,sloped]{\Large\NE}}]
      [\scalebox{2}{c}, edge label={node[midway,above,sloped]{\Large\NW}}]]]
\end{forest}}},\quad \Pi = \perm{231546}   
\end{align*}
We start counting at the leaves 
\begin{align*}
\textup{vertex}(\Pi,\CT,b) &= \scalebox{0.8}{\begin{tabular}{|>{\columncolor{white}[0pt][\tabcolsep]}l|l|l|l|l|l|}
\arrayrulecolor{gray} 
\hline
  &  &  &  &  & 1 \\ \hline
  &  &  & 1 &  &  \\ \hline
  &  &  &  & 1 &  \\ \hline
  & 1 &  &  &  &  \\ \hline
 1 &  &  &  &  &  \\ \hline
  &  & 1 &  &  &  \\ \hline
\end{tabular}}
\,, 
\textup{vertex}(\Pi,\CT,c) = \scalebox{0.8}{\begin{tabular}{|>{\columncolor{white}[0pt][\tabcolsep]}l|l|l|l|l|l|}
\arrayrulecolor{gray} 
\hline
  &  &  &  &  & 1 \\ \hline
  &  &  & 1 &  &  \\ \hline
  &  &  &  & 1 &  \\ \hline
  & 1 &  &  &  &  \\ \hline
 1 &  &  &  &  &  \\ \hline
  &  & 1 &  &  &  \\ \hline
\end{tabular}}\end{align*}
and then going up the edges
\begin{align*}
\textup{edge}(\Pi,\CT,(a,b)) &= \scalebox{0.8}{\begin{tabular}{|>{\columncolor{white}[0pt][\tabcolsep]}l|l|l|l|l|l|}
\arrayrulecolor{gray} 
\hline
  &  &  &  &  & 0 \\ \hline
  &  &  & 1 &  &  \\ \hline
  &  &  &  & 1 &  \\ \hline
  & 3 &  &  &  &  \\ \hline
 4 &  &  &  &  &  \\ \hline
  &  & 3 &  &  &  \\ \hline
\end{tabular}},\
 \textup{edge}(\Pi,\CT,(a,c)) =\scalebox{0.8}{\begin{tabular}{|>{\columncolor{white}[0pt][\tabcolsep]}l|l|l|l|l|l|}
\arrayrulecolor{gray} 
\hline
  &  &  &  &  & 0 \\ \hline
  &  &  & 0 &  &  \\ \hline
  &  &  &  & 1 &  \\ \hline
  & 0 &  &  &  &  \\ \hline
 0 &  &  &  &  &  \\ \hline
  &  & 2 &  &  &  \\ \hline
\end{tabular}}
\end{align*}
We then take the entry-wise product
\begin{align*}
\textup{edge}(\Pi,\CT,(a,b)) \cdot \textup{edge}(\Pi,\CT,(a,c)) = \textup{vertex}(\Pi,\CT,a) &= \scalebox{0.8}{\begin{tabular}{|>{\columncolor{white}[0pt][\tabcolsep]}l|l|l|l|l|l|}
\arrayrulecolor{gray} 
\hline
  &  &  &  &  & 0 \\ \hline
  &  &  & 0 &  &  \\ \hline
  &  &  &  & 1 &  \\ \hline
  & 0 &  &  &  &  \\ \hline
 0 &  &  &  &  &  \\ \hline
  &  & 6 &  &  &  \\ \hline
\end{tabular}}
\end{align*}
and finally
\begin{align*}
\textup{edge}(\Pi,\CT,(r,a)) =  \textup{vertex}(\Pi,\CT, r) = \scalebox{0.8}{\begin{tabular}{|>{\columncolor{white}[0pt][\tabcolsep]}l|l|l|l|l|l|}
\arrayrulecolor{gray} 
\hline
  &  &  &  &  & 0 \\ \hline
  &  &  & 1 &  &  \\ \hline
  &  &  &  & 0 &  \\ \hline
  & 6 &  &  &  &  \\ \hline
 6 &  &  &  &  &  \\ \hline
  &  & 0 &  &  &  \\ \hline
\end{tabular}}
\end{align*}
Summing $\textup{vertex}(\Pi,\CT, r)$ yields the number of occurrences. We represented the tuples as grids to help visualize the directions.
\end{example}

\subsubsection*{Algorithm to count occurrences of a pure-west corner tree}
\label{subsection:ct_purewest}
We now present \Cref{algorithm_pure_west}, to count occurrences of a pure west corner tree. Slight variations of this algorithm are used in \Cref{subsection:algo_arbone} to count $\TypeA$ and $\TypeAnotB$, respectively. 

Let $\CT \in \CornerTrees$ such that all its edge labels are either $\SW$ or $\NW$. Let $a \in V(\CT)$ be a vertex of $\CT$ with children $c_{1}, \dots ,c_{k}$. Let $\Pi \in \permutations(n)$ be a permutation. For each $i \in \{1,...,n\}$, we can write 
\begin{align*}
   \textup{vertex}(\Pi, \CT, a)[i] &=\left(\sum_{\substack{j<i\\\cdots}} \textup{vertex}(\Pi, \CT, c_{1})[j]\right)\cdots \left(\sum_{\substack{j<i\\\cdots}} \textup{vertex}(\Pi, \CT, c_{k})[j]\right).
\end{align*}
This means that the number of occurrences where the root is mapped to $i$ only depends on the occurrences of the child vertices for which $j < i$. This allows us to count the occurrences of such corner trees by scanning the permutation only once from left to right. Indeed, to count the occurrences of a pure west corner tree, \Cref{algorithm_occurrences} can be rewritten as \Cref{algorithm_pure_west}. 

\begin{algorithm}
\caption{Counting pure-west corner trees occurrences}
\label{algorithm_pure_west}
\begin{spacing}{1.2}

\begin{algorithmic}[1]
\algfont
\Function{COUNT-W}{permutation $\Pi$, root $r$} \Comment{{\scriptsize $r$ is the root of the pure-west corner tree $\CT$}} 
        \State $\text{vertex\_dict} \gets \text{ DICTIONARY}()$\Comment{{\scriptsize indexed by subtrees}} 
        \State $\text{edge\_dict} \gets \text{ DICTIONARY}()$ \Comment{{\scriptsize indexed by subtrees and edge labels}}
        \State $\text{result} \gets \text{ARRAY}(0, \ldots, 0)$ \Comment{{\scriptsize of size $n = |\Pi|$}}

        \Function{VERTEX-W}{permutation $\Pi$, vertex $v$, index $i$} \Comment{{\scriptsize $v$ is the subtree rooted at $v$}}

            \If{$v \notin \text{vertex\_dict}$}
                \State $\text{vertex\_dict}[v] \gets  \text{ARRAY}(1, \ldots, 1)$ \Comment{{\scriptsize of size $n$}}
            \EndIf
            \ForAll{$e \in \text{child-edges}(v)$}
                \State $\text{vertex\_dict}[v][i] \gets \text{vertex\_dict}[v][i] \cdot \Call{EDGE-W}{\Pi, e, i}$ 
            \EndFor
            \State \Return $\text{vertex\_dict}[v][i]$
        \EndFunction
        
    \Function{EDGE-W}{permutation $\Pi$, $e$, index $i$} \Comment{{\scriptsize $e$ is an edge of $\CT$}}
             \State $\text{edge\_id} \gets (\text{child\_vertex}(e),\edgelabel(e))$

            \If{$\text{edge\_id} \notin \text{edge\_dict}$}
                \State $\text{edge\_dict}[\text{edge\_id}] \gets \text{ SUM-TREE}(0, \ldots, 0)$ \Comment{{\scriptsize of size $n$}}
            \EndIf

            \State $\text{edge\_dict}[\text{edge\_id}][\Pi(i)] \gets \Call{VERTEX-W}{\Pi, \text{child\_vertex}(e), i}$

            \If{$\edgelabel(e) = \SW$}
                \State \Return $\text{edge\_dict}[\text{edge\_id}][\Pi[i]].\text{prefix\_sum}$ 
            \Else
                \State \Return $\text{edge\_dict}[\text{edge\_id}][\Pi[i]].\text{suffix\_sum}$
            \EndIf
        \EndFunction
   
    \For{$i = 1$ \textbf{to} $n$}
                \State $\text{result}[i] \gets \Call{VERTEX-W}{\Pi, r, i}$ \Comment{{\scriptsize$r$ is the root vertex of the corner tree $\CT$}}
            \EndFor
        \State \Return $\text{result}$
    \EndFunction
     \end{algorithmic}
     \end{spacing}
\end{algorithm}

\subsection{Algorithm to count occurrences of \ArboNE}
\label{subsection:algo_arbone}
We now provide a high-level explanation of the algorithm that allows to count occurrences of the tree double posets $\ArboNE$ on permutations. We also refer to the code in \cite{diehl_verri_generalization_2025}. There are three sub-algorithms for each type of counting. We refer to \Cref{remark:algo_cases} where we illustrate why an algorithm that counts the following three cases yields the correct counting. Observe that $\TypeA$ and $\TypeAnotB$ are slight variations of \Cref{algorithm_pure_west}.

\subsubsection*{Algorithms to count occurrences of $\TypeA(\arboNE,\SntoDP(\Pi))$}

Let $\arboNE \in \ArboNE, \Pi \in \permutations(n)$, and consider an interval partition $\mathtt{Int}=\{\mathtt{Int}_{h}|h \in H\}$ whose blocks have equal size $m < n$. We count $\TypeA$ as follows. For each block $\mathtt{Int}_{h}$, we run a slightly modified version of \Cref{algorithm_pure_west}. We perform once the loop $i=1,\dots, n$. Each time that $\Pi(i)<\min(\mathtt{Int}_{h})$, we increase the counter {\algfont countCTW} with the occurrences of the pure-west corner tree $\TTDPtoCT(\arboNE|_{V(\arboNE)\setminus \{\MAX\}},\ME)$ whose root is mapped to $i$. If $\Pi(i)\in \mathtt{Int}_{h}$, we return {\algfont countCTW}: these are the occurrences of $\TypeA$ where $\MAX$ gets mapped to $i$. Finally, if $\Pi(i) > \mathtt{Int}_{h}$ we do nothing. We do this for all the blocks in $\mathtt{Int}$, i.e., a total number of $\lceil n/m \rceil$ times. 

\subsubsection*{Algorithm to count occurrences of $\TypeAnotB(\arboNE,\SntoDP(\Pi))$}

Let $\arboNE \in \ArboNE, \Pi \in \permutations(n)$, and consider an interval partition $\mathtt{Int}=\{\mathtt{Int}_{h}|h \in H\}$ whose blocks have equal size $m < n$. We can count $\TypeAnotB$ as follows. For each block $\mathtt{Int}_{h}$, we run a slightly modified version of \Cref{algorithm_pure_west}. We perform once the loop $i=1,\dots, n$. Each time that $\Pi(i)<\min(\mathtt{Int}_{h})$, we increase the counter {\algfont countCT} with the occurrences of the pure-west corner tree $\TTDPtoCT(\arboNE|_{V(\arboNE)\setminus \{\MAX\}},\ME)$ whose root is mapped to $i$. Now each time $i=\min(\mathtt{Int}_{\ell})$ for some $\ell$ we store in {\algfont countCTback}, the occurrences we counted so far in {\algfont countCT}. If $\Pi(i)\in \mathtt{Int}_{h}$, we return {\algfont countCT} - {\algfont countCTback}, i.e. the occurrences of $\TypeAnotB$ where $\MAX$ gets mapped to $i$. Again, if $\Pi(i) > \mathtt{Int}_{h}$ we do nothing. We do this for all the blocks in $\mathtt{Int}$, i.e. $\lceil n/m \rceil$ times.

\subsubsection*{Algorithm to count occurrences of $\TypenotAnotB(\arboNE,\SntoDP(\Pi))$}

The following remark concerns product trees, i.e., an efficient way of updating a square table and computing \quot{rectangular} sums, see also the class {\algfont ProductTree} in \cite{diehl_verri_generalization_2025}. This data structure is used for the algorithm that counts occurrences of $\TypenotAnotB$.
\begin{remark}
Say we have created a $4 \times 4$ table. Then storing the value $c$ at coordinates $x=2, y=3$ yields
\begin{align*}
\begin{matrix}
\textbf{\textup{4}} &0  & 0 & 0 & 0\\
\textbf{\textup{3}} &0  & c & 0 & 0\\
\textbf{\textup{2}} &0  & 0 & 0 & 0\\
\textbf{\textup{1}} &0  & 0 & 0 & 0\\
 & \textbf{\textup{1}} & \textbf{\textup{2}} & \textbf{\textup{3}} & \textbf{\textup{4}}
\end{matrix} 
\end{align*}
which means creating the keys $\{[2,3),[1,3),[1,5)\} \times \{[3,4),[3,5),[1,5)\}$ in a dictionary,
\begin{align*}
    &\{(2, 3, 3, 4): c, (2, 3, 3, 5): c, (2, 3, 1, 5): c, (1, 3, 3, 4): c, (1, 3, 3, 5): c, (1, 3, 1, 5): c,\\&(1, 5, 3, 4): c, (1, 5, 3, 5): c, (1, 5, 1, 5): c\}
\end{align*}
and assign them values $c$. The keys correspond to the pairs of dyadic intervals containing $1$ and $2$, respectively. Therefore, updating a table of size $n$ costs $(\lceil\log_{2}(n)\rceil+1)^{2}$. If we use this encodings for square tables, we can perform rectangular queries using the coarsest interval partitions based on dyadic intervals. For example to sum over $([3,11),[0,3))$, we loop over
\begin{align*}
    [3,11) &= [3,4) \cup [4,8) \cup [8,10) \cup [10,11),\quad\text{and}\\
    [0,3) &= [0,2) \cup [2,3).\end{align*}
Generating such partitions costs at most $(\lceil\log_{2}(n)\rceil+1)^{2}$ operations. We then loop over at most $(\lceil\log_{2}(n)\rceil+1)^{2}$ keys of the dictionary to get the sum of the corresponding values.
\end{remark}

For $\TypenotAnotB$ we proceed as follows. Consider the corner trees dangling from $\ME$, the corner tree between $\ME$ and $\MN$, and the corner trees dangling from $\MN$. For each corner tree, $\CT$, we create a product tree where at entry $(i,\Pi(i))$ we store the number of occurrences of $\CT$ on $\Pi$ where the root is mapped to $i$. Consider an interval partition $\mathtt{Int}=\{\mathtt{Int}_{h}|h \in H\}$ whose blocks have equal size $m < n$. First we loop through the permutation points $(i_{1},\Pi(i_{1}))$, where $i_{1}$ corresponds to the image of $\MAX$. Now say $i_{1}\in \mathtt{Int}_{h}$ for some $h$. We then loop over all pairs of points $(i_{2},\Pi(i_{2}))$ and $(i_{3},\Pi(i_{3}))$ such that $i_{2}<i_{3}, \Pi(i_{2})>\Pi(i_{3})$, and $i_{2},\Pi(i_{3}) \in \mathtt{Int}_{h}$, i.e. $i_{2}$ is the image of $\MN$ and $i_{3}$ is the image of $\ME$, respectively. For the corner trees dangling from $\ME$, on the respective product trees, we query the sum of the entries to the south-west $(i_{3},\Pi(i_{3}))$. Analogously, for the corner trees dangling from $\MN$, we query the sum of the entries to the south-west $(i_{2},\Pi(i_{2}))$. We also query the rectangular sum $(i_{2},i_{3}) \times (\Pi(i_{3}),\Pi(i_{2}))$ for the corner tree between $\ME$ and $\MN$. We then take the product of all these queries.

\begin{lemma}
\label{lemma:complexity_cases}
   Let $\arboNE \in \ArboNE$, $\Pi \in \permutations(n)$, and let $ m < n $ be the size of the blocks of the partitions. Then 
   \begin{itemize}
       \item  $|\TypeA(\arboNE, \SntoDP(\Pi))|$ and $|\TypeAnotB(\arboNEprime, \SntoDP(\Pi^{-1}))|$ require $\tilde{\mathcal O}(n^{2}/m)$
   time and $\tilde\O(n)$ space,
\item  and $|\TypenotAnotB(\arboNE, \SntoDP(\Pi))|$ requires $\tilde{\mathcal O}(nm^2)$ time
   and $\tilde\O(n)$ space.
\end{itemize}
\end{lemma}
\begin{proof}
We only show the time complexity. We first argue for $|\TypeA(\arboNE, \SntoDP(\Pi))|$. The case $|\TypeAnotB(\arboNEprime, \SntoDP(\Pi^{-1}))|$ is analgous. We loop over the permutation $\lceil n/ m \rceil$ times. Within each loop, we count occurrences of the corner tree $\TTDPtoCT(\arboNE|_{V(\arboNE)\setminus \{\MAX\}},\ME)$ or update the counter of occurrences where $\MAX$ is mapped within a certain block. This costs $k n\log_{2}(n)$, where $k$ is the number of vertices of the tree. Therefore, the overall time complexity is $\tilde{\mathcal O}(n^{2}/m)$.

For $|\TypenotAnotB(\arboNE, \SntoDP(\Pi))|$ we argue as follows. Assume that there are $k_{1},k_{2} \in \NN$ corner trees dangling from $\ME$ and $\MN$. We store the occurrences of these corner trees and the corner tree between $\ME$ and $\MN$ in product trees. This costs $(k_{1}+k_{2}+1)n\log_{2}^{2}(n)$ operations. Then we loop once over the permutation, and for each point, being the image of $\MAX$, there are at most $m^{2}$ pairs of points being the images of $\ME$ and $\MN$ respectively, so that we count occurrences of $|\TypenotAnotB(\arboNE, \SntoDP(\Pi))|$. For each pair we query $(k_{1}+k_{2}+1)$ product-trees which costs $(k_{1}+k_{2}+1)\log_{2}^{2}(n)$ operations. Therefore, the overall time complexity is $\tilde{\mathcal O}(nm^{2})$.

\end{proof}
\begin{theorem}
\label{theorem:gen3214}
Let $\Pi \in \permutations(n)$ and $\arboNE \in \ArboNE$. Then counting
\begin{align*}
   |\Hom(\arboNE,\SntoDP(\Pi))|
\end{align*}
is feasible in time $\tilde{\mathcal O}(n^{5/3})$ and space $\tilde\O(n)$.
\end{theorem}
\begin{proof}
Minimizing the sum $n^{2}/m+nm^{2}$ with respect to $m$ yields $m= (\frac{n}{2})^{1/3}$. Picking the size of the blocks to be $m:=\lfloor n^{1/3}\rfloor$ yields the desired result. 
\end{proof}

\subsection{New directions at level 5}
\label{subsection:new_dir_level5}
Using \Cref{theorem:gen3214}, we can count three directions on $\Q[\permutations(5)]$ not spanned by corner trees with five vertices. Leveraging on some simple symmetries yields a total of twelve new directions.

\begin{remark}[Action of $D_{4}$]
\label{remark:actionD4}
If $P$ is a strict poset, we denote its opposite poset as $P^{\textup{op}}$ where
\begin{align*}
(b, a) \in P^{\textup{op}} \iff  (a, b) \in P.
\end{align*}
    Let $\smalldp :=(A,P_{A},Q_{A})$ be a double poset.
    Recall that $\Swap$ was defined in \Cref{remark:swaptalgo}. Define
    \begin{align*}
        \Rev(\smalldp) &:= (A,P^{\textup{op}}_{A},Q_{A})\\
         \Comp(\smalldp) &:= (A,P_{A},Q^{\textup{op}}_{A}).
    \end{align*}
It is not hard to see that the dihedral group with eight elements $D_{4}$ acts on the set of equivalence classes of double posets, through $\Swap,\Rev,\Comp$ and their composition.

Now, it is easy to see that for any $f$ in the group generated by $\Swap,\Rev,\Comp$,
\begin{align*}
    |\Hom(f(\smalldp),f(\largedp))| = |\Hom(\smalldp,\largedp)|.  
\end{align*}

As a consequence, for example
\begin{align*}
    |\Hom(\Rev(\Swap(\smalldp)),\largedp)| = |\Hom(\smalldp,\Swap(\Rev(\largedp))|.  
\end{align*}

Finally, we note that the action can be restricted to the set of permutations realized as double posets,
and thereby yielding an action of $D_4$ on permutations (note that instead, as is, for example, done in \cite{even2021counting}, we could also just immediately
define the actions of $\Rev$, $\Comp$ and $\Swap$, resp., directly on permutations).

Then, with these two actions, the map $\Phi:=\proj_{\mathcal{S}}\circ\; \Phi_{\regmono \leftarrow \hom}$ satisfies
\begin{align*}
    \Phi( f(d) ) = f( \Phi_{\regmono \leftarrow \hom}(d) ),
\end{align*}
 i.e., is \textup{equivariant}. For example, we have
\begin{align*}
    \Phi( \Rev(\Swap(\smalldp)) )
    &=
    \Rev(\Swap( \Phi(\smalldp) )).
\end{align*}

%
%

\end{remark}

It is known to \cite{even2021counting} that 
\begin{align*}
    &\textup{dim}_\Q \ \DPtoSn\ \proj_{\mathcal{S}}\left(\Phi_{\regmono \leftarrow \hom}\left(\bigoplus_{n \le 5}\Q\left[\twintreedoubleposets(n)   \right]\right)\right)\cap \Q[\permutations(5)]  = 100,
\end{align*}
Using our generalization, we can count
\begin{align*}
\DPtoSn \ \proj_{\mathcal{S}}\left(\Phi_{\regmono \leftarrow \hom}(\Q[\ArboNE(5)])\right).
\end{align*}
This yields three new directions, given by three elements of $\ArboNE$. We get nine more directions if we let $D_{4}$ act on them, see \Cref{remark:actionD4}. Observe that twin tree double posets are closed under the action of $D_{4}$ while this does not hold for general elements of $\ArboNE$. Applying \Rev, \Comp\;and their composition to the double posets in \Cref{figure:three double posets} kicks us out of $\ArboNE$.
\begin{figure}[H]
\centering
\begin{minipage}{0.3\textwidth}
        \centering
      \scalebox{0.2}{\begin{tikzpicture}[every node/.style={circle, draw, inner sep=4pt, minimum size=6mm}]
\node (X) at (8,4) {\scalebox{3}{$\MAX$}};
\node (A) at (-2,0) {\scalebox{3}{$\ME$}};
\node (B) at (2,-4) {\phantom{\scalebox{2}{$\MN$}}};
\node (C) at (6,-8) {\scalebox{3}{$\MN$}};
\node (D) at (-2,-4) {\phantom{\scalebox{2}{$\MN$}}};

\node[draw=none] (H) at (2,-8) {\phantom{\scalebox{2}{$\MN$}}};

\node[draw=none] (K) at (6,-12) {\phantom{\scalebox{2}{$\MN$}}};

    \tikzset{mid arrow/.style={
        postaction={decorate,decoration={
            markings,
            mark=at position .5 with {\arrow[scale=3]{stealth}}
        }}
    }}
        \draw[-{Stealth[scale=3]}] (X) -- (A) node[draw = none, pos=0.4, above, yshift=10pt] {\scalebox{2}{\S}};
        \draw[-{Stealth[scale=3]}] (X) -- (B) node[draw = none, pos=0.4, above, yshift=10pt] {\scalebox{2}{\S}};
         \draw[-{Stealth[scale=3]}] (X) -- (C) node[draw = none, pos=0.4, right,xshift=3pt, yshift=10pt] {\scalebox{2}{\S}};
    \draw[-{Stealth[scale=3]}] (A) -- (B) node[draw = none, pos=0.4, above, yshift=10pt] {\scalebox{2}{\N}};
    \draw[-{Stealth[scale=3]}] (B) -- (C) node[draw = none, pos=0.4, above, yshift=10pt] {\scalebox{2}{\N}};
    \draw[-{Stealth[scale=3]}] (A) -- (D) node[draw = none, pos=0.5, left, yshift=10pt] {\scalebox{2}{\S}};
\end{tikzpicture}}
       \caption*{$\arboNEone$}
    \end{minipage}
     \begin{minipage}{0.3\textwidth}
        \centering
      \scalebox{0.2}{\begin{tikzpicture}[every node/.style={circle, draw, inner sep=4pt, minimum size=6mm}]
\node (X) at (8,4) {\scalebox{3}{$\MAX$}};
\node (A) at (-2,0) {\scalebox{3}{$\ME$}};
\node (B) at (2,-4) {\phantom{\scalebox{2}{$\MN$}}};
\node (C) at (6,-8) {\scalebox{3}{$\MN$}};
\node[draw=none] (D) at (-2,-4) {\phantom{\scalebox{2}{$\MN$}}};

\node (H) at (2,-8) {\phantom{\scalebox{2}{$\MN$}}};

\node[draw=none] (K) at (6,-12) {\phantom{\scalebox{2}{$\MN$}}};

    \tikzset{mid arrow/.style={
        postaction={decorate,decoration={
            markings,
            mark=at position .5 with {\arrow[scale=3]{stealth}}
        }}
    }}
        \draw[-{Stealth[scale=3]}] (X) -- (A) node[draw = none, pos=0.4, above, yshift=10pt] {\scalebox{2}{\S}};
        \draw[-{Stealth[scale=3]}] (X) -- (B) node[draw = none, pos=0.4, above, yshift=10pt] {\scalebox{2}{\S}};
         \draw[-{Stealth[scale=3]}] (X) -- (C) node[draw = none, pos=0.4, right,xshift=3pt, yshift=10pt] {\scalebox{2}{\S}};
    \draw[-{Stealth[scale=3]}] (A) -- (B) node[draw = none, pos=0.4, above, yshift=10pt] {\scalebox{2}{\N}};
    \draw[-{Stealth[scale=3]}] (B) -- (C) node[draw = none, pos=0.4, above, yshift=10pt] {\scalebox{2}{\N}};
    \draw[-{Stealth[scale=3]}] (B) -- (H) node[draw = none, pos=0.5, left, yshift=10pt] {\scalebox{2}{\S}};
\end{tikzpicture}}
       \caption*{$\arboNEtwo$}
    \end{minipage}
    \begin{minipage}{0.3\textwidth}
        \centering
      \scalebox{0.2}{\begin{tikzpicture}[every node/.style={circle, draw, inner sep=4pt, minimum size=6mm}]
\node (X) at (8,4) {\scalebox{3}{$\MAX$}};
\node (A) at (-2,0) {\scalebox{3}{$\ME$}};
\node (B) at (2,-4) {\phantom{\scalebox{2}{$\MN$}}};
\node (C) at (6,-8) {\scalebox{3}{$\MN$}};
\node[draw=none] (D) at (-2,-4) {\phantom{\scalebox{2}{$\MN$}}};

\node[draw=none] (H) at (2,-8) {\phantom{\scalebox{2}{$\MN$}}};

\node (K) at (6,-12) {\phantom{\scalebox{2}{$\MN$}}};

    \tikzset{mid arrow/.style={
        postaction={decorate,decoration={
            markings,
            mark=at position .5 with {\arrow[scale=3]{stealth}}
        }}
    }}
        \draw[-{Stealth[scale=3]}] (X) -- (A) node[draw = none, pos=0.4, above, yshift=10pt] {\scalebox{2}{\S}};
        \draw[-{Stealth[scale=3]}] (X) -- (B) node[draw = none, pos=0.4, above, yshift=10pt] {\scalebox{2}{\S}};
         \draw[-{Stealth[scale=3]}] (X) -- (C) node[draw = none, pos=0.4, right,xshift=3pt, yshift=10pt] {\scalebox{2}{\S}};
    \draw[-{Stealth[scale=3]}] (A) -- (B) node[draw = none, pos=0.4, above, yshift=10pt] {\scalebox{2}{\N}};
    \draw[-{Stealth[scale=3]}] (B) -- (C) node[draw = none, pos=0.4, above, yshift=10pt] {\scalebox{2}{\N}};
    \draw[-{Stealth[scale=3]}] (C) -- (K) node[draw = none, pos=0.5, left, yshift=10pt] {\scalebox{2}{\S}};
\end{tikzpicture}}
       \caption*{$\arboNEthree$}
    \end{minipage}
\caption{Three elements of $\ArboNE$ that span three new directions.}  
  \label{figure:three double posets}
\end{figure}

We improve to 
\begin{align*}
    &\textup{dim}_\Q \ \DPtoSn\ \proj_{\mathcal{S}}\left(\Phi_{\regmono \leftarrow \hom}\left(\bigoplus_{n \le 5}\Q[\twintreedoubleposets(n)]\oplus \Q[\textbf{New}]\right)\right)\cap \Q[\permutations(5)]  = 112,
\end{align*}
where \begin{align*}
    &\textbf{New}:=\{\arboNEone,\arboNEtwo,\arboNEthree,\Rev(\arboNEone),\Rev(\arboNEtwo),\Rev(\arboNEthree),\\&\quad\quad\quad\quad\quad\Comp(\arboNEone),\Comp(\arboNEtwo),\Comp(\arboNEthree),\\&\quad\quad\quad \quad\quad\Rev(\Comp(\arboNEone)),\Rev(\Comp(\arboNEtwo),\Rev(\Comp(\arboNEthree))\}.\end{align*} To check the correctness of our results, in \cite{diehl_verri_generalization_2025}, we provide an implementation of the 5-profile using as a basis for $\bigoplus_{n \le 5}\Q[\permutations(n]$ 
\begin{align*}
    \bigcup_{n \le 5}\twintreedoubleposets(n)\cup \{\DPtoSn(\perm{3214})\} \cup \textbf{New}\cup \textbf{patterns}
\end{align*}
where \textbf{patterns} is the set of patterns 
$$\{\perm{12435},\perm{12453},\perm{13245},\perm{13254},\perm{13425},\perm{14235},\perm{14325},\perm{14352}\}$$
not spanned by our double posets. Note that any set of eight linearly independent vectors not contained in the span would have sufficed.

\section{Conclusion and outlook}
In this work, we have shown that corner trees and permutations belong to certain classes of double posets. This encoding leads to a broader theoretical framework that generalizes the one developed in \cite{even2021counting}. A generalization is necessary, even at the cost of a slower algorithm. Indeed, corner trees fail to count all permutation patterns already at level $4$. To address this issue, in the recent work of \cite{beniamini2024counting}, the authors introduce a generalization of corner trees, called \emph{pattern trees}. Within this framework, they are able to compute the $5$-profile in $\tilde{\mathcal{O}}(n^{7/4})$ time and the 6- and 7-profiles in $\tilde{\mathcal{O}}(n^{2})$ time.

 Here we introduce a family of tree double posets that generalize the permutation pattern $\perm{3214}$. The algorithm is based on ideas that are similar to the ones given in \cite{even2021counting} to count $\perm{3214}$ and the complexity is again $\tilde{\mathcal O}(n^{5/3})$. Using this generalization, we were able to fill twelve of the missing $20$ directions at level $5$ countable in $\tilde{\mathcal O}(n^{5/3})$ time. It remains open whether one can use similar ideas to develop other algorithms and increase the span of permutations countable with this time complexity. We also point out that our framework allows us to consider arbitrary families of double posets and correspondent algorithms to count permutation patterns faster than the naive approach.

\section{Open questions}

\begin{itemize}
\item In \cite{beniamini2024counting}, an algorithm is presented that computes the 5-profile in $\tilde{\mathcal O}(n^{7/4})$ time. We show that 112 directions can be computed in $\tilde{\mathcal O}(n^{5/3})$. Is it possible, using similar ideas and methods from \Cref{section:gen_algo}, to span all 120 directions within $\tilde{\mathcal O}(n^{5/3})$ time? For example, are there additional symmetries beyond the action of $D_{4}$ that we can exploit?
      \item Observe that  \begin{align*}
          &\proj_{\mathcal{S}}\circ\,\Phi_{\regmono \leftarrow \hom}\left(\;   \raisebox{-1cm}{\scalebox{0.5}{\begin{tikzpicture}[every node/.style={circle, draw, inner sep=2pt, minimum size=6mm}]
   \node (A) at (0,2) {};
  \node (B) at (0,0) {};
  \node (C) at (0,-2) {};
  \tikzset{mid arrow/.style={
    postaction={decorate,decoration={
      markings,
      mark=at position .5 with {\arrow[scale=3]{stealth}}
    }}
  }}
    \draw[-{Stealth[scale=3]}] (C) -- (B) node[draw = none, pos=0.1, right, yshift=10pt] {\S};
    \draw[-{Stealth[scale=3]}] (A) -- (B) node[draw = none, pos=0.5, right, yshift=10pt] {\S};
 \end{tikzpicture}}}\quad+\quad\raisebox{-1cm}{\scalebox{0.5}{\begin{tikzpicture}[every node/.style={circle, draw, inner sep=2pt, minimum size=6mm}]
   \node (A) at (0,2) {};
  \node (B) at (0,0) {};
  \node (C) at (0,-2) {};
  \tikzset{mid arrow/.style={
    postaction={decorate,decoration={
      markings,
      mark=at position .5 with {\arrow[scale=3]{stealth}}
    }}
  }}
    \draw[-{Stealth[scale=3]}] (B) -- (C) node[draw = none, pos=0.7, right, yshift=10pt] {\N};
  \draw[-{Stealth[scale=3]}] (B) -- (A) node[draw = none, pos=0.2, right, yshift=10pt] {\N};
 \end{tikzpicture}}}\quad-2\quad\raisebox{-1cm}{\scalebox{0.5}{\begin{tikzpicture}[every node/.style={circle, draw, inner sep=2pt, minimum size=6mm}]
   \node (A) at (0,2) {};
  \node (B) at (0,0) {};
  \node (C) at (0,-2) {};
  \tikzset{mid arrow/.style={
    postaction={decorate,decoration={
      markings,
      mark=at position .5 with {\arrow[scale=3]{stealth}}
    }}
  }}
    \draw[-{Stealth[scale=3]}] (B) -- (C) node[draw = none, pos=0.5, right, yshift=10pt] {\N};
   \draw[-{Stealth[scale=3]}] (A) -- (B) node[draw = none, pos=0.5, right, yshift=10pt] {\N};
 \end{tikzpicture}}}\quad -2\quad\raisebox{-1cm}{\scalebox{0.5}{\begin{tikzpicture}[every node/.style={circle, draw, inner sep=2pt, minimum size=6mm}]
   \node (A) at (0,2) {};
  \node (B) at (0,0) {};
  \node (C) at (0,-2) {};
  \tikzset{mid arrow/.style={
    postaction={decorate,decoration={
      markings,
      mark=at position .5 with {\arrow[scale=3]{stealth}}
    }}
  }}
    \draw[-{Stealth[scale=3]}] (B) -- (C) node[draw = none, pos=0.5, right, yshift=10pt] {\S};
    \draw[-{Stealth[scale=3]}] (A) -- (B) node[draw = none, pos=0.5, right, yshift=10pt] {\S};
 \end{tikzpicture}} }-2\quad\raisebox{-1cm}{\scalebox{0.5}{\begin{tikzpicture}[every node/.style={circle, draw, inner sep=2pt, minimum size=6mm}]
   \node (A) at (0,2) {};
  \node (B) at (0,0) {};
  \node (C) at (0,-2) {};
  \tikzset{mid arrow/.style={
    postaction={decorate,decoration={
      markings,
      mark=at position .5 with {\arrow[scale=3]{stealth}}
    }}
  }}
    \draw[-{Stealth[scale=3]}] (B) -- (C) node[draw = none, pos=0.5, right, yshift=10pt] {\N};
    \draw[-{Stealth[scale=3]}] (A) -- (B) node[draw = none, pos=0.5, right, yshift=10pt] {\S};
 \end{tikzpicture}} }\right)   \\&\\&=   \perm{12} +  \perm{21}. \end{align*}
At level 2, we can already count all directions using corner trees. At level four,  can we use corner trees at higher levels to increase the dimension of the subspace of permutations efficiently counted? 

\item Is there a closed formula for the sequences $|\twintreedoubleposets(n)|$, $|\ArboNE(n)|$?

\end{itemize}

\section*{Acknowledgments}
We would like to thank Diego Caudillo, Moritz Sokoll, and Leonard Schmitz for fruitful discussions.
\printbibliography

\section*{Notation}

\begin{itemize}
    \item $\NN := {0,1,2,\dots}$ — set of non-negative integers.
    \item $\permutations := \bigcup_{n \ge 0} \permutations(n)$, where $\permutations(n) := \{\sigma : [n] \to [n] \mid \sigma \text{ bijection} \}$ — set of all finite permutations.

    \item $\sigma, \tau, \dots \in \permutations$ — small permutations; written in one-line notation, e.g., $\mathtt{[2\,1\,3]}$.

    \item $\Pi, \Lambda, \dots \in \permutations$ — large permutations.

    \item $\Q[\permutations] := \bigoplus_{n \ge 0} \Q[\permutations(n)]$ — free $\Q$-vector space over permutations.

\item $\CornerTrees := \bigcup_{n \ge 0} \CornerTrees(n)$ — set of (isomorphism classes of) corner trees; elements $\CT \in \CornerTrees$. See \Cref{definition:ct}.

\item $\SNPolyTrees := \bigcup_{n \ge 0} \SNPolyTrees(n)$ — set of (isomorphism classes of) SN polytrees; elements $\Tsn \in \SNPolyTrees$. See \Cref{defintion:SN_poly}.

\item $\DP := \bigcup_{n \ge 0} \DP(n)$ — set of (isomorphism classes of) double posets; elements $\smalldp \in \DP$.
\item $\smalldp = (A, P_A, Q_A)$ — double poset with vertex set $A$ and strict partial orders $P_A$ and $Q_A$.
 \item $\vertexset(\smalldp)$ — vertex set of $\smalldp$, i.e. the underlying set of the double poset $\smalldp$.

\item $\largedp \in \DP$ — large double poset.
  \item $\Hom(\smalldp, \smalldp')$ — set of homomorphisms from the double poset $\smalldp$ to the double poset $\smalldp'$.

\item $\Mono(\smalldp, \smalldp')$ — set of injective homomorphisms from the double poset $\smalldp$ to the double poset $\smalldp'$.

\item $\RegMono(\smalldp, \smalldp')$ — set of embeddings from the double poset $\smalldp$ to the double poset $\smalldp'$.

\item $\smalltwin \in \twindoubleposets$ — twin double posets. See \Cref{definition:TwinDP}. 

\item $\smalltree \in \treedoubleposets$ — tree double posets. See \Cref{definition:TreeDP}. 

\item $\smallct \in \twintreedoubleposets$ — twin tree double post. See \Cref{definition:TwinTreeDP}.
      
          \item $\arboNE \in \ArboNE$ — tree double posets for which our algorithm works. See \Cref{definition:algo_guys}.
          \item $<_{\textup{West}}, <_{\textup{South}}$ — notation for the two orders in $\treedoubleposets$ and $\twindoubleposets$.
          
\item $\CTtoSNpoly : \CornerTrees \to \SNPolyTrees$ — map from a corner tree to its SN polytree. See \Cref{proposition:CT_SN_Poly}.
         \item $\SNpolytoCT(\Tsn, v)$ — map from SN polytree $\Tsn$ to corner tree rooted at $v$. See \Cref{remark:rooting_SN_poly}.
         \item $\TTDPtoCT(\smallct, v)$ — map from twin tree double poset to corner tree rooted at $v$. See \Cref{remark:rooting_tt}.
          
         \item $\SNpolytoSDP : \SNPolyTrees \to \twintreedoubleposets$ — map from SN polytree to twin tree double poset. See \Cref{lemma:corresp_SN_poly_Twin_tree_DP}.
         \item $\SDPtoSNpoly : \twintreedoubleposets \to \SNPolyTrees$ — map from twin tree double poset to SN polytree. See \Cref{lemma:corresp_SN_poly_Twin_tree_DP}.

          \item $\mathcal{S} := \SntoDP(\permutations)$ — set of permutations embedded as double posets.
         
          \item $\SntoDP : \permutations \to    \mathcal{S}$ — map from permutation to its double poset representation. See \Cref{definition:perm_as_double}.
           
           \item $\DPtoSn : \mathcal{S} \to \permutations$ — map taking a permutation represented as a double poset to its underlying permutation. See \Cref{definition:perm_as_double}.

\end{itemize}

\appendix

\section{Supplementary material}
\label{section:suppl}
The goal of this appendix is to show that counting homomorphisms between double posets can always be translated to counting double order embeddings, see \Cref{theorem:hom_reg_mono}. This implies that occurrences of double posets on permutations can always be translated to linear combinations of pattern counts, see \Cref{proposition:countpermasperm}.

The appendix is structured as follows.
In Section\;\ref{subsection:cat_prel} we review the categorical concepts relevant to our work. In particular, we show that certain factorizations are essentially unique (this holds  for (\Epi,\RegMono)-factorizations, see \Cref{proposition:rmer}) and lead to certain combinatorial identities, see \Cref{proposition:comb_id}. 
As a warm-up, in Section\;\ref{subsection:fin_set_bin}  we provide characterizations of regular monomorphisms and epimorphisms in the category of finite directed graphs, \DiGraphs, and show that \DiGraphs\;admits an (\Epi,\RegMono)-factorization. In Section\;\ref{subsection:Poset} we show an analogous result for the category of strict posets, \Poset. With some slight modifications, we extend this to the category of double posets, \DPoset\;and prove \Cref{theorem:hom_reg_mono}, in Section\;\ref{subsection:d_strict_poset}.

\subsection{Category theory: preliminary notions}
\label{subsection:cat_prel}

In this section, we recall some basic notions from category theory. We refer to \parencite{addmek1990abstract} for a complete introduction to category theory.

A \emph{category} $\mathcal{C}$ consists of:
\begin{itemize}
    \item A collection of \emph{objects}, denoted as $\text{Ob}(\mathcal{C})$.
    \item A collection of \emph{morphisms} (or arrows) between objects. For each pair of objects $A, B \in \text{Ob}(\mathcal{C})$, there is a set of morphisms $\Hom(A, B)$. These morphisms can be composed, i.e., if $f : A \to B$ and $g : B \to C$ are morphisms, then there is a morphism $g \circ f : A \to C$. Furthermore, for each object $A \in \mathcal{C}$, there is an \emph{identity morphism} $\id_{A} : A \to A$ such that for all morphisms $f: A \to B$ and $g: C \to A$, we have $f \circ \id_{A}= f$ and $\id_{A}\circ g = g$.
\end{itemize}

Composition of morphisms is \emph{associative}, i.e., if $f : A \to B$, $g : B \to C$, and $h : C \to D$ are morphisms, then $$h \circ (g \circ f) = (h \circ g) \circ f.$$

\begin{example}
An example of a category is the category $\FinSet$, where the objects are finite sets and the morphisms are maps between sets. Another example is the category of $\DiGraphs$, where the objects are finite directed graphs and the morphisms are graph homomorphisms.  
\end{example}

A \emph{functor} is a structure-preserving map between categories. Given two categories $\mathcal{C}$ and $\mathcal{D}$, a functor $F : \mathcal{C} \to \mathcal{D}$ consists of:
\begin{itemize}
    \item A function $F : \text{Ob}(\mathcal{C}) \to \text{Ob}(\mathcal{D})$ that assigns to each object $A \in \mathcal{C}$ an object $F(A) \in \mathcal{D}$.
    \item A function $F : \Hom_{\mathcal{C}}(A, B) \to \Hom_{\mathcal{D}}(F(A), F(B))$ that assigns to each morphism $f : A \to B$ in $\mathcal{C}$ a morphism $F(f) : F(A) \to F(B)$ in $\mathcal{D}$.
\end{itemize}
The functor must satisfy:
\begin{itemize}
    \item $F(\id_A) = \id_{F(A)}$ for each object $A \in \mathcal{C}$.
    \item $F(g \circ f) = F(g) \circ F(f)$ for all morphisms $f : A \to B$ and $g : B \to C$ in $\mathcal{C}$.
\end{itemize}

\begin{example}
An example of a functor is the \textbf{forgetful functor} $F: \DiGraphs \to \FinSet$ that sends a directed graph to its underlying set and that sends a graph homomorphism to the corresponding set map. 
\end{example}
\begin{definition}[Products and coproducts]
The \emph{product} of two objects $A$ and $B$ in a category $\mathcal{C}$ is the limit of the diagram consisting of the two objects $A$,$B$ with no morphism between them. It is formed by an object $P$ along with two projections $\pi_A : P \to A$ and $\pi_B : P \to B$ such that for any other object $N$ with morphisms $f_A : N \to A$ and $f_B : N \to B$, there exists a unique morphism $u : N \to P$ making the following diagram commute:

\[
\begin{tikzcd}
& N \arrow[dl, "f_A"'] \arrow[dr, "f_B"] \arrow[d, dashed, "u"] & \\
A & P \arrow[l, "\pi_A"] \arrow[r, "\pi_B"'] & B
\end{tikzcd}
\]

The \emph{coproduct} is the dual notion, where we reverse the arrows.
\end{definition}

\begin{definition}[Equalizers and Coequalizers]

An \emph{equalizer} of two morphisms $f, g : A \to B$ is an object $E$ with a morphism $e : E \to A$ such that $f \circ e = g \circ e$, and for any other object $N$ with a morphism $h : N \to A$ such that $f \circ h = g \circ h$, there exists a unique morphism $u : N \to E$ making the diagram commute.

Dually, a \emph{coequalizer} is the colimit of the same diagram.
 
\end{definition}

\begin{definition}[Pullbacks and Pushouts]
   The \emph{pullback} is a limit of a diagram of the form $A \xlongrightarrow{f} C \xlongleftarrow{g} B$. It consists of an object $P$ and morphisms $\pi_A : P \to A$ and $\pi_B : P \to B$ such that the following diagram commutes:

\[
\begin{tikzcd}
P \arrow[r, "\pi_B"] \arrow[d, "\pi_A"'] & B \arrow[d, "g"] \\
A \arrow[r, "f"'] & C
\end{tikzcd}
\]

Moreover, it satisfies the universal property that for any object $N$ with morphisms $f_A : N \to A$ and $f_B : N \to B$ such that $g \circ f_B = f \circ f_A$, there exists a unique morphism $u : N \to P$.

The \emph{pushout} is the dual concept, where the arrows are reversed. 
\end{definition}

Assume that in our category, finite products and equalizers, always exist. Then the following remarks show that pullbacks always exist. 
\begin{remark}
\label{remark:prod_plus_eq_pull}
Consider the diagram
\begin{figure}[H]
\centering
\begin{tikzcd}
 & Y \arrow[d, "g"]\\ X\arrow[r, "f"] &  Z
\end{tikzcd}
\end{figure}
and the cone
\begin{figure}[H]
\centering
\begin{tikzcd}
 Q \arrow[r, "q_{2}"] \arrow[d, "q_{1}"]& Y \arrow[d, "g"]\\ X\arrow[r, "f"] &  Z
\end{tikzcd}
\end{figure}
Since finite products exist, we have
\begin{figure}[H]
\centering
\begin{tikzcd}
 Q \arrow[dr, "q", dashed]\arrow[drr, bend left, "q_2"] \arrow[ddr, bend right, "q_1"']&&\\
 & X \times Y \arrow[r, "\pi_{Y}"] \arrow[d, "\pi_{X}"]& Y \arrow[d, "g"]\\& X\arrow[r, "f"] &  Z
\end{tikzcd}
\end{figure}
Since equalizers exist, we know that there exist an object $P$ and a morphism $e:P\to X \times Y$ such that
\begin{figure}[H]
\begin{center}
\begin{tikzcd}
P \arrow[r,"e"]
&X \times Y\arrow[r, "f\circ \pi_{X}", shift left=0.75ex] \arrow[r,"g \circ \pi_{Y}"',shift right=0.75ex]
& (B,R_{B}) \\
Q\arrow[u,"\hat{q}",dashed]  \arrow[ru,"q"'] && 
\end{tikzcd}
\end{center}
\end{figure}
commutes. Finally, we have 
\begin{figure}[H]
\centering
\begin{tikzcd}
 Q \arrow[dr, "\hat{q}", dashed]\arrow[drr, bend left, "q_2"] \arrow[ddr, bend right, "q_1"']&&\\
 & P \arrow[r, "\pi_{Y} \circ e"] \arrow[d, "\pi_{X} \circ e"']& Y \arrow[d, "g"]\\& X\arrow[r, "f"] &  Z
\end{tikzcd}
\end{figure}
Dually, coproducts and coequalizers imply the existence of pushouts.
\end{remark}

We now recall the definitions of some types of morphisms.

Let $A, B$ be objects in some category $\Cat$.
A morphism $f \in \Hom_{\Cat}(A, B)$ is
\begin{itemize}
\item
    a \DEF{monomorphism}
    if for all objects $C$ and all $g,h \in \Hom_{\Cat}(C, A)$
    \begin{align*}
      f \circ g  = f \circ h \Rightarrow g = h.
    \end{align*}

  \item 
    an \DEF{epimorphism},
    if for all objects $C$ and for all $g,h \in \Hom_{\Cat}(B, C)$
    \begin{align*}
      g \circ f = h \circ f \Rightarrow g = h.
    \end{align*}

   \item
    a \DEF{regular monomorphism}
    if it is the equalizer of some parallel pair of morphisms,
    i.e. if there is a limit diagram of the form
    \begin{align*}
      A \arr{f} B \rightrightarrows D.
    \end{align*}
    It is well-known that a regular monomorphism is a monomorphism.
    
    \item  an \DEF{isomorphism} if it has a two-sided inverse: there is $f^{-1} \in \Hom_{\Cat}(B,A)$
    with $f^{-1} \circ f = \id_A, f \circ f^{-1} = \id_{B}$.
   \end{itemize}

We will explain how morphisms can be uniquely factored using epimorphisms and regular monomorphisms.

From \parencite{addmek1990abstract}, we recall the following definitions.

Let $E, M$ be classes of morphisms in $\Cat$.
We say that $\Cat$ has \DEF{$(E,M)$-factorization}
if every morphism $f$
in $\Cat$ can be written as $f=e \circ m$
for some $e \in E, m \in M$.

\begin{definition}
\label{definition:EMfact}
A category $\Cat$ is \DEF{$(E,M)$-structured} if it satisfies the following
\begin{enumerate}
 \item if $e \in E$ and $h$ is an isomorphism such that $h \circ e$ exists, then $h \circ e \in E$,
 \item if $m \in M$ and $h$ is an isomorphism such that $m \circ h$ exists, then $m \circ H \in M$,
 \item  $\Cat$ has \textbf{$(E,M)$-factorization},
 \item $\Cat$ has the \textbf{unique} \textbf{$(E,M)$-diagonalization property}, i.e., for each commutative diagram
 \begin{center}
\begin{tikzcd}
A \arrow[r, "e"] \arrow[d,"f"']
& B \arrow[d, "g"] \\
C \arrow[r, "m"']
&  D
\end{tikzcd}
\end{center}
with $e \in E$ and $m \in M$ there exists a unique morphism $d$ such that the following diagram commutes
\begin{center}
\begin{tikzcd}
A \arrow[r, "e"] \arrow[d,"f"']
& B \arrow[d, "g"] \arrow[dl,"d"'] \\
C \arrow[r, "m"']
&  D
\end{tikzcd}
\end{center}
\end{enumerate}
\end{definition}

The proof of the following proposition can be found in \parencite{addmek1990abstract}, see Proposition 14.4.
 
\begin{proposition}
\label{proposition:unique_fact}
If $\mathcal{C}$ is $(E,M)$-structured, then $(E,M)$-factorizations are essentially unique, i.e.,
\begin{enumerate}
\item if $A \arr{e_{i}} C_{i}\arr{m_{i}} B$ are $(E,M)$-factorizations of $A \arr{f}B$ for $i = 1, 2$, then there
exists a (unique) isomorphism $h$, such that the following diagram commutes:
\begin{center}
\begin{tikzcd}
A \arrow[r, "e_{1}"] \arrow[d,"e_{2}"']
& C_{1} \arrow[d, "m_{1}"] \arrow[dl,"h"'] \\
C_{2} \arrow[r, "m_{2}"']
&  B
\end{tikzcd}
\end{center}

\item if $A \arr{f}B = A \arr{e} C\arr{m} B$ is a factorization and $C \arr{h} D$
is an isomorphism, then $A \arr{f}B = A \arr{h \circ e} D\arr{m \circ h^{-1}} B$ is also an $(E,M)$-factorization of $f$.
\end{enumerate}

\end{proposition}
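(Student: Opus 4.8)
The plan is to prove Proposition~\ref{proposition:unique_fact} from the three axioms of an $(E,M)$-structured category in Definition~\ref{def:EMfact}, with the unique $(E,M)$-diagonalization property doing essentially all the work. For part~(1), given the two factorizations $A \xrightarrow{e_1} C_1 \xrightarrow{m_1} B$ and $A \xrightarrow{e_2} C_2 \xrightarrow{m_2} B$ of $f$, I would first note that the square
\begin{center}
\begin{tikzcd}
A \arrow[r,"e_1"] \arrow[d,"e_2"'] & C_1 \arrow[d,"m_1"] \\
C_2 \arrow[r,"m_2"'] & B
\end{tikzcd}
\end{center}
commutes, since $m_1\circ e_1 = f = m_2\circ e_2$. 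As $e_1\in E$ and $m_2\in M$, diagonalization yields a unique $h\colon C_1\to C_2$ with $h\circ e_1 = e_2$ and $m_2\circ h = m_1$. Interchanging the roles of the two factorizations (now using $e_2\in E$ and $m_1\in M$) gives a unique $h'\colon C_2\to C_1$ with $h'\circ e_2 = e_1$ and $m_1\circ h' = m_2$.

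The second step is to show that $h$ is an isomorphism with inverse $h'$. From the identities above, $h'\circ h\circ e_1 = h'\circ e_2 = e_1$ and $m_1\circ(h'\circ h) = m_2\circ h = m_1$, so $h'\circ h$ is a diagonal filler for the commutative square whose top and left arrows are both $e_1$ and whose right and bottom arrows are both $m_1$ (here $e_1\in E$, $m_1\in M$); but $\mathrm{id}_{C_1}$ is also such a filler, and that square has a unique diagonal, so $h'\circ h = \mathrm{id}_{C_1}$. Symmetrically $h\circ h' = \mathrm{id}_{C_2}$, hence $h$ is an isomorphism making the required triangle commute. For the uniqueness of $h$: any isomorphism $\tilde h$ with $\tilde h\circ e_1 = e_2$ and $m_2\circ\tilde h = m_1$ is by definition a diagonal of the first square, and uniqueness of that diagonal forces $\tilde h = h$.

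For part~(2), I would simply observe that $h\circ e\in E$ and $m\circ h^{-1}\in M$ by axiom~(1) of Definition~\ref{def:EMfact} (closure of $E$ and $M$ under composition with isomorphisms), and that $(m\circ h^{-1})\circ(h\circ e) = m\circ e = f$; hence $A\xrightarrow{h\circ e} D\xrightarrow{m\circ h^{-1}} B$ is again an $(E,M)$-factorization of $f$. The only delicate point in the whole argument — and the step I would treat most carefully — is the double use of the \emph{uniqueness} clause of diagonalization: once to produce and name $h$ and $h'$, and once (applied to the degenerate square above) to identify $h'\circ h$ and $h\circ h'$ with identities. One must check that this degenerate square is genuinely commutative and that $\mathrm{id}$ is a legitimate diagonal for it, which is immediate. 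I would also remark that, as usual, $(E,M)$-factorizations are then unique only up to this essentially unique isomorphism, matching the statement of the proposition and the reference \cite[Proposition~14.4]{addmek1990abstract}.
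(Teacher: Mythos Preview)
Your proof is correct and is the standard argument; the paper does not give its own proof but simply cites \cite[Proposition~14.4]{addmek1990abstract}, whose argument is essentially the one you have written out.
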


\begin{proposition}
\label{proposition:rmer}
If a category $\Cat$ admits an (\Epi,\RegMono)-factorization, then the category is (\Epi,\RegMono)-structured. The factorization is unique in the sense of \Cref{proposition:unique_fact}.
\end{proposition}
\begin{proof}

 Consider the four conditions from \Cref{definition:EMfact}. Clearly, the $(\Epi,\RegMono)$-factorization satisfies the first two. For the unique diagonalization property, we can argue as follows. Let $e$ be an epimorphism and let $m$ be a regular monomorphism. Assume $g \circ e = m \circ f$, for some morphisms $f$ and $g$, i.e. consider the following commutative diagram.
\begin{figure}[pbth]
\begin{center}
\begin{tikzcd}
A \arrow[r, "e"] \arrow[d, "f"']
& B \arrow[d, "g"]  \\
 C \arrow[r, "m"']
& D
\end{tikzcd}
\end{center}
\end{figure}
Since $n$ is a regular monomorphism, we know that it is the limit of some parallel pair $h, h^{\prime}:D \to X$.
\begin{figure}[H]
\begin{center}
\begin{tikzcd}
 & A \arrow[r, "e"] \arrow[d, "f"']
& B \arrow[d, "g"]  \\
& C \arrow[r, "m"']
& D \arrow[r, shift left, "h"] \arrow[r, shift right, "h^{\prime}"'] & X 
\end{tikzcd}
\end{center}
\end{figure}
Therefore, $h \circ g \circ e = h^{\prime} \circ g \circ e$. Since $e$ is a epimorphism, it follows that $h \circ g = h^{\prime} \circ g$, i.e., $g$ is a cone. Therefore, there exists a unique arrow $d$ such that $m \circ d=g $. Since $m$ is a monomorphism,  $m \circ f = m \circ d \circ e$ implies $f = d \circ e$, and we are done. 
\begin{figure}[H]
\begin{center}
\begin{tikzcd}
 & A \arrow[r, "e"] \arrow[d, "f"']
& B \arrow[d, "g"] \arrow[dl, "d"] \\
& C \arrow[r, "m"']
& D \arrow[r, shift left, "h"] \arrow[r, shift right, "h^{\prime}"'] & X 
\end{tikzcd}
\end{center}
\end{figure}

\end{proof}

Let $\Cat$ be a concrete category where the underlying sets are finite. This means that there is a forgetful functor $U: \Cat \to \FinSet$ which sends the objects of $\Cat$ to their underlying sets and the morphisms in $\Cat$ to their underlying set maps. If such a category $\Cat$ is $(E,M)$-structured, we write
\begin{align*}
    E(A,B)&:=\{f \in \Hom(A,B)|f \in E\},\\
    M(A,B)&:=\{f \in \Hom(A,B)|f \in M\}.
\end{align*}

The following result holds.

\begin{proposition}
\label{proposition:comb_id}
Let $f \in \Hom(A, C)$ be an arrow in $\Cat$, where $\Cat$ is a concrete category where the underlying sets are finite. Additionally assume that $\Cat$ is $(E,M)$-structured. Therefore, we know that there exists an object $B$ in $\Cat$ such that $e \in E(A,B)$, $m \in M(B,C),$ and $f = m \circ e$. Then we have
\begin{align*}
|\{(r,s) \in E(A,B)\times M(B,C)|\;f &= s \circ r \}| = |\Aut(B)|.
\end{align*}
Furthermore
\begin{align*}
|\{t \in \Hom(A, C)|\;\exists e \in E(A,B),\exists m \in M(B,C), t = m \circ e\}| &= \frac{|E(A,B)||M(B,C)|}{|\Aut(B)|}.
\end{align*}
\end{proposition}
\begin{proof}
The first equality follows immediately from \Cref{proposition:unique_fact}. We now prove the second identity. The map
\begin{align*}
    \Psi: E(A,B) \times M(B,C) &\to \{t \in \Hom(A, C)|\;\exists e \in E(A,B),\exists m \in M(B,C), t = m \circ e\}\\
   (e,m) &\mapsto m \circ e.
\end{align*}
is surjective by definition. Furthermore, using the first equality, we clearly have $|\Psi^{-1}(f)| = |\Aut(B)|$.
\end{proof}

\subsection{\DiGraphs: Finite Sets Equipped with Binary Relations}
\label{subsection:fin_set_bin}

We now consider the category of finite directed graphs, which we denote with \DiGraphs.

\begin{definition}
\label{definition:directed_graph}
A directed graph $\graph$ is a pair
\begin{align*}
   \graph:=(A,R_{A})
\end{align*}
where $A$ is a finite set and  $R_{A}\subseteq A \times A$ is a relation.
\end{definition}
We will also write $ \graph = (V(\graph),E(\graph))$ and refer to $V(\graph)$ as the vertex set of $\graph$ and to $E(\graph)$ as the  edge set of $\graph$. Morphisms are maps that preserve relations. We will also refer to a directed graph simply as a \quot{graph}.

\begin{definition}
\label{definition:graph_homomorphism}
Given two graphs $\graph=(A,R_{A}),\,\graph^{\prime}=(B,R_{B})$, a map of sets $f: A \to B$ is a \DEF{graph homomorphism}, written $f: \graph \to \graph^{\prime}$, if
\begin{align*}
    \forall a,a^{\prime} \in A: (a,a^{\prime}) \in R_{A}
\implies (f(a),f(a^{\prime})) \in R_{B}. \end{align*}
\end{definition}
Graph homomorphisms are the morphisms in the category \DiGraphs. It follows from the definition of graph homomorphism that $f$ is an isomorphism if and only if the map $f$ is a bijective graph homomorphism whose inverse is again a graph homomorphism.

We now define induced subgraphs.

\begin{definition}[Induced Subgraph]
\label{definition:induced_subgraph}
Let $\graph:=(V(\graph),E(\graph))$ be an object in \DiGraphs. Let $U \subseteq V(\graph)$. Then
$(U,E(\graph)\cap U \times U)$
is an induced subgraph of $\graph$. 
\end{definition}

The proof of the following proposition is left as an exercise.
\begin{proposition}
\label{proposition:inj_graph_are_mono}
In the category of \DiGraphs, the following holds
\begin{enumerate}
    \item injective graph homomorphisms are monomorphisms,
     \item surjective graph homomorphisms are epimorphisms.
\end{enumerate}
\end{proposition}

\subsubsection*{Finite coproducts}

In \DiGraphs, finite coproducts always exist. We omit the standard proof of the following result.
\begin{proposition}
\label{proposition:coproduct_graphs}
Let $\graph_{1},\graph_{2}\in \DiGraphs$. Then the graph
\begin{align*}
   \graph_{1} \sqcup \graph_{2}&:=(V(\graph_{1} \sqcup \graph_{2}),E(\graph_{1}) \sqcup E(\graph_{2})) 
\end{align*}
forms a coproduct together with the maps
\begin{align*}
    i_{1}:\graph_{1}  &\to \graph_{1} \sqcup \graph_{2},\;\;
   v \mapsto (v,1)\\
     i_{2}:\graph_{2}  &\to \graph_{1} \sqcup \graph_{2},\;\;
       v \mapsto (v,2).
\end{align*}
\end{proposition}

\subsubsection*{Equalizers}

In \DiGraphs, equalizers always exist.

\begin{proposition}
\label{proposition:di_graph_eq_exists}
Let $f,g:(A,R_{A})\to (B,R_{B})$ be a parallel pair in \DiGraphs. Then the cone given by 
\begin{align*}
    \iota: (E,R_{A} \cap (E \times E))\to (A,R_{A}),
\end{align*}
where $E:=\{a\in A|\;f(a) = g(a)\}$, is an equalizer.
\end{proposition}
\begin{proof}
Let $h:(D,R_{D})\to (A,R_{A})$ be another cone. We have $h(D)\subseteq E$ and $h(R_{D})\subseteq R_{A} \cap (h(R_{D}) \times h(R_{D}))\subseteq R_{A} \cap (E \times E)$. Therefore, we can corestrict $h$ to $\hat{h}:(D,R_{D})\to(E,R_{A} \cap (E \times E))$ such that $h = \iota \circ \hat{h}$. Since $\iota$ is injective, it is a monomorphism. It follows that $\hat{h}$ is unique and we are done.
\end{proof}

\subsubsection*{Coequalizers}

In \DiGraphs, coequalizers always exist.

\begin{proposition}
\label{proposition:coeq_digraphs}
Let $\graph_{1},\graph_{2}$ be objects in \DiGraphs.
Consider a parallel pair $f,g:\graph_{1} \to \graph_{2}$. First, we define
\begin{align*}
 \langle R \rangle:=\bigcap_{\substack{\\\\\{(f(v),\,g(v))|\;v\in V(\graph_{1})\}\subseteq \;R \;\subseteq V(\graph_{2})\times V(\graph_{2})\\\\  R\;\textup{is an equivalence relation}}} \hspace{-2cm}R
\end{align*} 
Now define the directed graph $\graphh$, as follows. Let the vertex set
\begin{align*}
    V(\graphh) :=\{[v]_{\sim \langle R \rangle}|\;v \in V(\graph_{2})\}
\end{align*}
be the partition induced by $\langle R \rangle$. The edge set is defined as
\begin{align*}
    E(\graphh) :=\{(h,h^{\prime}) \in  V(\graphh)\times V(\graphh)|\exists v \in h \land \exists v^{\prime} \in h^{\prime}:(v,v^{\prime})\in E(\graph_{2})\}.
\end{align*}
Then, $\graphh$, together with 
\begin{align*}
\pi:\graph_{2} &\to \graphh\\
v &\mapsto [v]_{\sim\langle R \rangle}
\end{align*}
is a coequalizer.
\end{proposition}
\begin{proof}
 Obviously, $\pi \in \Hom(\graph_{2},\graphh)$.
Furthermore, $\pi$ is a cocone, i.e.
\begin{align*}
\forall v \in V(\graph):\;\pi(f(v)) = \pi(g(v))
\end{align*}
since $(f(v),g(v))\in \langle R\rangle$. Given another cocone, i.e. a directed graph $\graph_{3}$ and a map $\pi^{\prime}\in \Hom(\graph_{2},\graph_3)$ such that $\pi^{\prime}\circ f = \pi^{\prime}\circ g$, we define $u:\graphh \to \graph_{3}$ as
\begin{align*}
 u([v]_{\sim\langle R \rangle}):=\pi^{\prime}([v]_{\sim\langle R \rangle}).
\end{align*}
As a set map, it is well-defined since the partition
\begin{align*}
    \{(\pi^{\prime})^{-1}(v)| \in \pi^{\prime}(V(\graph_{2}))\}
\end{align*}
is coarser than $V(\graphh)$. To see that it is a graph homomorphism, we argue as follows. Let $(h,h^{\prime}) \in E(\graphh)$. We know that there exists $v\in h$ and $v^{\prime}\in h^{\prime}$ such that $(v,v^{\prime})\in E(\graph_{2})$.
Therefore we have $(\pi^{\prime}(v),\pi^{\prime}(v^{\prime}))=(\pi^{\prime}([v]_{\sim\langle R \rangle}),\pi^{\prime}([v^{\prime}]_{\sim\langle R \rangle})) = (\pi^{\prime}(h),\pi^{\prime}(h^{\prime}))\in E(\graphh_{3})$.
Therefore $u \in \Hom(\graphh,\graph_{3})$.

Since $\pi$ is an epimorphism, $u$ is the unique map satisfying this condition.

\begin{figure}[pbth]
\centering
\begin{tikzcd}
\grapho_{1}
\arrow[r, swap,"f", shift right=0.75ex] 
\arrow[r, "g", shift right=-0.75ex] 
&\grapho_{2}\arrow[r, "\pi"] \arrow[d, "\pi^{\prime}"] 
& \graphh\arrow[ld, "u"',swap]\\
& \grapho_{3}&.
\end{tikzcd}
\end{figure}

\end{proof}

\subsubsection*{Pushouts}

From the existence of all finite coproducts and coequalizers, we can deduce the existence of all pushouts, see \Cref{remark:prod_plus_eq_pull}.

\begin{proposition}
\label{proposition:pushouts_digraphs}
Let $f: \graph \to \graphh_{1}$ and $g: \graph \to \graphh_{2}$ be graph homomorphisms and let $i_{1}: \graphh_{1} \to \graphh_{1}\sqcup \graphh_{2}$ and $i_{2}: \graphh_{2} \to \graphh_{1}\sqcup \graphh_{2}$ be the canonical injections. Then the graph
\begin{align*}
\graphh_{\partition}:=(V(\graphh_{\partition}),E(\graphh_{\partition})),
\end{align*}
where
$$V(\graphh_{\partition}):=\{[v]_{\sim_{R}}|v\in V(\graphh_{1})\sqcup V(\graphh_{2})\}$$
is the partition induced by

\begin{align*}
 \hspace{-3cm}\langle R \rangle:= \bigcap_{\substack{\\\\ \{(i_{1}(f(v)),\,i_{2}(g(v)))|\;v\in V(\graph)\}\subseteq R\\\\ R\; \subseteq\; V(\graphh_{1})\sqcup V(\graphh_{2})\times V(\graphh_{1})\sqcup V(\graphh_{2})\;\textup{is an equivalence relation}}}\hspace{-3cm}R
\end{align*} 
and 
\begin{align*}
    E(\graphh_{\partition}) :=\{(h,h^{\prime}) \in  V(\graphh_{\partition})\times V(\graphh_{\partition})|\exists v \in h \land \exists v^{\prime} \in h^{\prime}:(v,v^{\prime})\in E(\graphh_{1})\sqcup E(\graphh_{2}) \},
\end{align*}
together with $j_{1}:=\pi_{\partition}\circ i_{1}$ and $j_{2}:=\pi_{\partition}\circ i_{2}$, with $\pi_{\partition}(v)=[v]_{\sim \langle R \rangle}$, is a pushout.
\end{proposition}

The following proposition together with \Cref{proposition:inj_graph_are_mono} characterizes monomorphisms and epimorphisms in \Digraphs\;as the injective and the surjective maps, respectively.

\begin{proposition}
\label{proposition:mono_epi_inj_surj}
In the category of \DiGraphs, the following holds
\begin{enumerate}
    \item monomorphisms are injective graph homomorphisms,
     \item epimorphisms are surjective graph homomorphisms.
\end{enumerate}
\end{proposition}
\begin{proof}
We show 2.
We use contrapositive: if $f$ is not subjective, then it is not an epimorphism. Let $f:\graph \to \graphh$ be a non-surjective graph homomorphism. Denote with $\graphh_{\partition}$, $j_{1}$ and $j_{2}$ the pushout of $f$ with itself, see \Cref{proposition:pushouts_digraphs}. By construction, $f$ is a cone, i.e., $f\circ j_{1}=f\circ j_{2}$. Recall that
$$V(\graphh_{\partition})=\{(b,1),(b,2)|\;b \in f(A)\} \cup \bigcup_{b \in B \setminus f(A)}\{(b,1)\}\cup \bigcup_{b \in B \setminus f(A)}\{(b,2)\}.$$
Since $f$ is non-surjective, clearly $j_{1}\neq j_{2}$ and we are done.

\end{proof}

\subsubsection*{Regular monomorphisms}

We will now show that regular monomorphisms correspond to graph embeddings. We first define graph embeddings.

\begin{definition}
\label{definition:graph_embedding}
Let $f:\graph \to \graphh$ be an arrow in \DiGraphs. We say that $f$ is a \DEF{graph embedding} if
\begin{align*}
    \hat{f}:\graph &\to (f(V(\graph)),E(\graphh)\cap f(V(\graph))\times f(V(\graph)))\\
    v &\mapsto f(v)
\end{align*}
is an isomorphism.
\end{definition}
As we show in the following lemma, the composition of graph embeddings is again a graph embedding.

\begin{lemma}
\label{lemma:comp_graph_emb}
Let $f:\graph_{1}\to \graph_{2}$ and $g:\graph_{2}\to \graph_{3}$ be both graph embeddings. Then $g \circ f$ is a graph embedding.
\end{lemma}
\begin{proof}

Let $h:= g \circ f$. Then
\begin{align*}
    \hat{h}:\graph_{1} &\to (h(V(\graph_{1})),E(\graph_{3})\cap h(V(\graph_{1}))\times h(V(\graph_{1})))\\
    v &\mapsto h(v)
\end{align*}
is an isomorphism. Indeed, the mapping
\begin{align*}
    \hat{h}^{-1}:(h(V(\graph_{1})),E(\graph_{3})\cap h(V(\graph_{1}))\times h(V(\graph_{1}))) &\to \graph_{1}\\
    v &\mapsto h^{-1}(v)
\end{align*}
is the compositional inverse of $\hat{h}$ and a graph homomorphism because
\begin{align*}
&(v,v^{\prime}) \in E(\graph_{3})\cap h(V(\graph_{1}))\times h(V(\graph_{1})) \\&\implies (g^{-1}(h(v)),g^{-1}(h(v^{\prime})))  \in E(\graph_{2})\cap f(V(\graph_{1}))\times f(V(\graph_{1}))\\&
\implies (v,v^{\prime})  \in E(\graph_{1}).
\end{align*}
The first implication holds because $g$ is an embedding, and the second one because $f$ is an embedding.
\end{proof}

\begin{proposition}
\label{proposition:digrap_regmono}
In \DiGraphs, regular monomorphisms are precisely the graph embeddings.
\end{proposition}
\begin{proof}
Let $f:(A,R_{A})\to (B,R_{B})$ be a limit for the parallel pair $h,g:(B,R_{B})\to (D,R_{D})$. We know from \Cref{proposition:di_graph_eq_exists}, that the inclusion map $\iota:(E,R_{B}\cap (E \times E))\to (B,R_{B})$ with $E:=\{b\in B|h(b)=g(b)\}$ is an equalizer as well.
\begin{figure}[pbth]
\begin{center}
\begin{tikzcd}
(E,R_{B}\cap (E \times E)) \arrow[r,"\iota",hook]
&(B,R_{B}) \arrow[r, "h", shift left=0.75ex] \arrow[r,"g"',shift right=0.75ex]
& (D,R_{D}) \\
(A, R_{A})\arrow[u,"u",dashed]  \arrow[ru,"f"'] &&. 
\end{tikzcd}
\end{center}
\end{figure}
Therefore $f=\iota \circ u$ holds, where $u$ is an isomorphism and, in particular, a graph embedding. Since $\iota$ is also a graph embedding, we can use \Cref{lemma:comp_graph_emb} to conclude that $f$ is also a graph embedding and we are done.

Now let $f:\graph \to \graphh$ be a graph embedding. Denote with $\graphh_{\partition}$, $j_{1}$ and $j_{2}$ the pushout of $f$ with itself, see \Cref{proposition:pushouts_digraphs}. By construction, $f$ is a cone. Since
$$V(\graphh_{\partition})=\{(b,1),(b,2)|\;b \in f(A)\} \cup \bigcup_{b \in B \setminus f(A)}\{(b,1)\}\cup \bigcup_{b \in B \setminus f(A)}\{(b,2)\},$$
the graph
$$\graphh_{f(V(\graph))}:=\left(f(V(\graph)),E(\graphh)\cap f(V(\graph)) \times f(V(\graph))\right)$$
together with the inclusion $\iota:\graphh_{f(V(\graph))}\to \graphh$ is an equalizer. 

Therefore we have $f=\iota \circ \hat{f}$. By definition of graph embedding, $\hat{f}$ is an isomorphism; therefore, $f$ is also an equalizer, and we are done.

\begin{figure}[pbth]
\begin{center}
\begin{tikzcd}
\grapho \arrow[r, "f"] \arrow[dd, "\hat{f}",swap, dashed] & \graphh \arrow[r, "j_{1}", shift left=0.75ex] \arrow[r, swap,"j_{2}", shift right=0.75ex] & \graphh_{\partition} \\&&\\
\graphh_{f(V(\graph))}  \arrow[uur, "\iota",hook,swap]&&
\end{tikzcd}
\end{center}
\end{figure}
\end{proof}

\begin{proposition}
\label{proposition:digraphs_facto}
\DiGraphs\;admits an (\Epi,\RegMono)-factorization 
\end{proposition}
\begin{proof}
Let $f:\graph \to \graphh$ be a graph homomorphism. Clearly, we have $f = \iota \circ \hat{f}$ where $\hat{f}:\graph\to (f(V(\graph)),E(\graphh)\cap f(V(\graph))\times f(V(\graph)))$ is the corestriction of $f$ and $\iota: (f(V(\graph)),E(\graphh)\cap f(V(\graph))\times f(V(\graph)) \to \graphh$ is the inclusion map. Obviously, $\hat{f}$ is a surjection and $\iota$ an embedding.
\end{proof}
  
\subsection{\Poset}
\label{subsection:Poset}

We now characterize regular monomorphisms and epimorphisms in \Poset, the category of strict posets. We also show that \Poset\;admits an (\Epi,\RegMono)-factorization.

\Poset\;is a \textit{full} subcategory of \DiGraphs. This means that the set of morphisms between any two objects in \Poset\;is given by all the homomorphisms between these two objects in \DiGraphs. 

We will use that \Poset\;is also a full subcategory of \DAG, the category of directed acyclic graphs. \DAG\;is a full subcategory of \Digraphs\;as well.

\begin{definition}
\label{definition:poset}
Let $A$ be a finite set and $R \subseteq A \times A$. We say that $R$ is a strict poset relation if it is asymmetric and transitive, i.e.
\begin{itemize}
  \item $\forall a,a^{\prime} \in A: (a,a^{\prime}) \in R \implies (a^{\prime},a) \not\in R$,
    \item $\forall a,a^{\prime},a^{\prime\prime}\in A: (a,a^{\prime}) \in R \land (a^{\prime},a^{\prime\prime}) \in R \implies (a,a^{\prime\prime}) \in R$.
\end{itemize}
Observe that asymmetry implies that $R$ is irreflexive, i.e. $$\forall a \in A: (a,a)\not \in R.$$
\end{definition}
\begin{definition}
\label{definition:trans_clos}
Let $A$ be a set, and $R \subset A \times A$. The \DEF{transitive closure} of $R$ is defined as
\begin{align*}
\TransClos(R):=\bigcap_{\substack{S \subset A \times A:\\S\;\text{is transitive and}\; R \subset S}}S.
\end{align*}
\end{definition}

\begin{definition}
\label{definition:trans_red}
Let $A$ be a set and $R \subset A \times A$ be a transitive relation. The \DEF{transitive reduction} of $R$ is defined as
\begin{align*}
\TransRed(R):=\bigcap_{\substack{S \subset A \times A:\\\TransClos(S)=R}}S.
\end{align*}
\end{definition}

\begin{remark}
\label{remark:posets_are_DAG}
If we denote with $\DAG$ the category of directed acyclic graphs, this is also a full subcategory of $\DiGraphs$ of sets equipped with asymmetric relations whose transitive closure is again asymmetric, i.e.
\begin{align*}
(A,P_{A})\in \DAG &\iff (A,\TransClos(P_{A}))\in \Poset.
\end{align*}
\end{remark}

\begin{definition}
\label{definition:Hasse_diagram}
Let $R \subset A \times A$ be a transitive, asymmetric relation, i.e., $(A,R)\in \Poset$. We will refer to $(A,\TransRed(R))$ as its \textbf{Hasse diagram}.
\end{definition}

As for \DiGraphs, the following holds. If we use \Cref{proposition:special_pushout}, the proof is analogous and is omitted.
\begin{proposition}
In \Poset:
 \begin{itemize}
     \item monomorphisms are precisely the injective order preserving maps,
     \item epimorphisms are precisely the surjective order preserving maps.
 \end{itemize}
\end{proposition}
In \Poset, isomorphisms are precisely the invertible graph homomorphisms whose inverses are also graph homomorphisms.

The proof of the following result is immediate.

\begin{theorem}
 For \Poset, finite coproducts always exist.  
\end{theorem}

The following proposition implies that induced subgraphs cannot take us out of the category we are in.
\begin{proposition}
\label{proposition:induced_subgraphs_closed}
 Let $f:(A,R_{A})\to (B,R_{B})$ be a morphism in \Poset. Then 
 \begin{align*}
     (f(A), R_{B}\cap f(A) \times f(A))
 \end{align*}
 is still an object in \Poset.
\end{proposition}
\begin{proof}
Immediate.
\end{proof}

In \Poset \; equalizers always exist.

\begin{proposition}
\label{proposition:poset_exists}
Let $f,g:(A,R_{A})\to (B,R_{B})$ be a parallel pair in \Poset. Then the cone given by 
\begin{align*}
    \iota: (E,R_{A} \cap (E \times E))\to (A,R_{A}),
\end{align*}
where $E:=\{a\in A|\;f(a) = g(a)\}$, is an equalizer.
\end{proposition}
\begin{proof}
Since
\Cref{proposition:induced_subgraphs_closed} holds, the construction for \DiGraphs\;from the proof of \Cref{proposition:di_graph_eq_exists} can be used in the same way.
\end{proof}

In \Poset, one must first verify that the coequalizer is a directed acyclic graph, which might not occur. If the resulting object is in \DAG, then taking its transitive closure will yield an object in \Poset, see \Cref{remark:posets_are_DAG}.
\begin{proposition}
\label{proposition:coeq_poset}
In \Poset\, a coequalizer exists if and only if the coequalizer constructed in \Cref{proposition:coeq_digraphs} is an object in \DAG.
\end{proposition}
\begin{example}
Consider \Cref{figure:coeq_poset}. For the parallel pair $f,g:\{x_{1},x_{2}\}\to \{x_{3},x_{4},x_{5},x_{6}\},$ $h:\{x_{3},x_{4},x_{5},x_{6}\}\to \{\{x_{3}\},\{x_{4},x_{5}\},\{x_{6}\}\}$ is a coequalizer in \Poset.
\begin{figure}[H]
  \centering
  \begin{minipage}{0.32\textwidth}
    \centering
    \scalebox{0.6}{
      \begin{tikzpicture}
        \begin{scope}[every node/.style={circle,thick,draw}]
          \node (A) at (2,4) {$x_{3}$};
          \node  (B) at (2,2) {$x_{4}$};
          \node(C) at (2,0) {$x_{5}$};
          \node  (D) at (2,-2) {$x_{6}$};
          \node  (E) at (-2,2) {$x_{1}$};
          \node (F) at (-2,0) {$x_{2}$};
        \end{scope}
        \draw[->, thick] (A) to node[left] {} (B);
        \draw[->, thick] (C) to node[left] {} (D);
        \draw[->, thick,red] (E) to node[left] {} (C);
        \draw[->, thick,red] (F) to node[left] {} (B);
      \end{tikzpicture}
    }
    \caption*{$f$}
  \end{minipage}
  \hspace{0.01\textwidth}
  \begin{minipage}{0.28\textwidth}
    \centering
    \scalebox{0.6}{
      \begin{tikzpicture}
        \begin{scope}[every node/.style={circle,thick,draw}]
          \node (A) at (2,4) {$x_{3}$};
          \node  (B) at (2,2) {$x_4$};
          \node(C) at (2,0) {$x_{5}$};
          \node  (D) at (2,-2) {$x_{6}$};
          \node  (E) at (-2,2) {$x_{1}$};
          \node (F) at (-2,0) {$x_{2}$};
        \end{scope}
        \draw[->, thick] (A) to node[left] {} (B);
        \draw[->, thick] (C) to node[left] {} (D);
        \draw[->, thick,red] (E) to node[left] {} (B);
        \draw[->, thick,red] (F) to node[left] {} (C);
      \end{tikzpicture}
    }
    \caption*{$g$}
  \end{minipage}
  \hspace{0.01\textwidth}
  \begin{minipage}{0.28\textwidth}
    \centering
    \scalebox{0.6}{
      \begin{tikzpicture}
        \begin{scope}[every node/.style={circle,thick,draw}]
          \node (A) at (-2,4) {$x_{3}$};
          \node  (B) at (-2,2) {$x_{4}$};
          \node(C) at (-2,0) {$x_{5}$};
          \node  (D) at (-2,-2) {$x_{6}$};
          \node  (G) at (2,3) {$\{x_{3}\}$};
          \node  (H) at (2,1) {$\{x_{4},x_{5}\}$};
          \node (I) at (2,-1) {$\{x_{6}\}$};
        \end{scope}
        \draw[->, thick] (A) to node[left] {} (B);
        \draw[->, thick] (C) to node[left] {} (D);
        \draw[->, thick] (G) to node[left] {} (H);
        \draw[->, thick] (H) to node[left] {} (I);
        \draw[->, thick,red] (A) to node[left] {} (G);
        \draw[->, thick,red] (B) to node[left] {} (H);
        \draw[->, thick,red] (C) to node[left] {} (H);
        \draw[->, thick,red] (D) to node[left] {} (I);
        \draw[->, thick,bend left=60] (G) to node[left] {} (I);
      \end{tikzpicture}
    }
    \caption*{$h$}
  \end{minipage}
  \caption{Coequalizer in \Poset}
  \label{figure:coeq_poset}
\end{figure}
\end{example}

The following proposition is used in the proof of \Cref{proposition:poset_reg_mono_order_embeddings}.

\begin{proposition}
\label{proposition:special_pushout}
 $f:\graph \to \graphh$ be an arrow in $\Poset$. Then the pushout of $f$ with itself as constructed in \Cref{proposition:pushouts_digraphs} is an object in $\Poset$.
\end{proposition}
\begin{proof}
Let $f:\graph \to \graphh$ be an arrow. Then the pushout of $f$ with itself is given by the vertex set
$$V(\graphh_{\partition}) =\{\{(v,1),(v,2)\}|\;v \in f(V(\graph))\} \cup \bigcup_{v \in V(\graphh) \setminus f(V(\graph))}\{(v,1)\}\cup \bigcup_{v \in V(\graphh) \setminus f(V(\graph))}\{(v,2)\},$$
and the edge set
\begin{align*}
&\forall u,v \in f(V(\graph)):\;(\{(u,1),(u,2)\},\{(v,1),(v,2)\}) \in E(\graphh_{\partition}) \iff (u,v)\in E(\graphh),\\
&\forall u \in f(V(\graph)),v \in V(\graphh)\setminus f(V(\graph)):\;(\{(u,1),(u,2)\},\{(v,1)\}) \in E(\graphh_{\partition}) \iff (u,v)\in E(\graphh),\\
&\forall u \in f(V(\graph)),v \in V(\graphh)\setminus f(V(\graph)):\;(\{(u,1),(u,2)\},\{(v,2)\}) \in E(\graphh_{\partition}) \iff (u,v)\in E(\graphh),\\
&\forall u,v \in V(\graphh)\setminus f(V(\graph)):\;(\{(u,1)\},\{(v,1)\}) \in E(\graphh_{\partition}) \iff (u,v)\in E(\graphh),\\
&\forall u,v \in V(\graphh)\setminus f(V(\graph)):\;(\{(u,2)\},\{(v,2)\}) \in E(\graphh_{\partition}) \iff (u,v)\in E(\graphh).
\end{align*}
Since $f$ is arrow in $\Poset$, then $f$ is an arrow in $\DAG$. We now show that $\graphh_{\partition}$ is an object in \DAG.
Let $z_{1} \sim \cdots \sim z_{n}$ be a chain in $E(\graphh_{\partition})$. This means that either $z_{i}=\{(u_{i},1)\}$, or $z_{i}=\{(u_{i},2)\}$,  or $z_{i}=\{(u_{i},1),(u_{i},2)\}$. Therefore, we have a chain in $V(\graphh)$ given by $u_{1}\sim u_{2} \sim \cdots \sim u_{n}$. Since $\graphh$ is an object in \DAG, $u_{1}\neq u_{n}$, and therefore $z_{1}\neq z_{n}$. Therefore, $\graphh_{\partition}$ is an object in \DAG. This means that $(V(\graphh_{\partition}),\TransClos(E(\graphh_{\partition})))$ is an object in \Poset, see \Cref{remark:posets_are_DAG}.
\end{proof}

Regular monomorphisms have an analogous characterization to \DiGraphs.
\begin{proposition}
\label{proposition:poset_reg_mono_order_embeddings}
    In \Poset\;regular monomorphisms are precisely the order embeddings.
\end{proposition}
\begin{proof}
We can apply \Cref{proposition:special_pushout}, and then the proof is analogous to that of \Cref{proposition:digrap_regmono}.
\end{proof}
The following proposition has a proof analogous to the one of \Cref{proposition:digraphs_facto}.
\begin{proposition}
 The category \Poset\;admits an (\Epi,\RegMono)-factorization.   
\end{proposition}

\subsection{\DPoset}
\label{subsection:d_strict_poset}
We now deal with the category of double posets, \DPoset. The objects are strict double posets, i.e., triples $\smalldp = \triplesmalldp$ where $A$ is a (finite) set, and $P_{A}$ and $ Q_{A}$ are strict posets on $A$. The morphisms are maps that simultaneously respect both the first and the second relation.
A morphism of double posets $\smalldp =\triplesmalldp$ and $\smalldpprime =\triplesmalldpprime$ is a map $f:A \to B$ such that 
\begin{align*}
    &\forall a_{1},a_{2} \in A: a_{1} <_{P_{A}} a_{2} \implies  f(a_{1}) <_{P_{B}} f(a_{2})\\
      &\forall a_{1},a_{2} \in A: a_{1} <_{Q_{A}} a_{2} \implies  f(a_{1}) <_{Q_{B}} f(a_{2})\\
\end{align*}

We state the following results without proof, as they are essentially analogous to those in Section\;\ref{subsection:Poset}. The same arguments apply, now carried out simultaneously for both relations.

\begin{proposition}
In \DPoset:
 \begin{itemize}
     \item monomorphisms are precisely the injective double order preserving maps,
     \item epimorphisms are precisely the surjective double order preserving maps.
 \end{itemize}
\end{proposition}
Observe that, again, in \DPoset, isomorphisms are precisely the invertible graph homomorphisms whose inverses are also graph homomorphisms. A double order embedding is the straightforward extension of the concept of graph embedding to both relations.

\begin{proposition}
\label{proposition:double_poset_reg_mono_order_embeddings}
    In \DPoset\;regular monomorphisms are precisely the double order embeddings.
\end{proposition}
\begin{proposition}
 The category \DPoset\;admits an (\Epi,\RegMono)-factorization.   
\end{proposition}
We can finally state the main theorem.
\begin{theorem}
\label{theorem:hom_reg_mono}
Let $\smalldp,\smalldpprime,\largedp$ be double posets. Then 
\begin{align*}
    |\Hom(\smalldp,\largedp)| =\sum_{\smalldpprime\in \DP} |\{f \in \Hom(\smalldp,\largedp)|\largedp_{f(V(\smalldp))}\cong \smalldpprime\}|&=\sum_{\smalldpprime \in \DP} \frac{|\Epi(\smalldp,\smalldpprime)|}{|\Aut(\smalldpprime)|}|\RegMono(\smalldpprime,\largedp)|
\end{align*}
where $\DP$ is the set of equivalence classes of double posets and $\largedp_{f(V(\smalldp))}$ is the double poset induced by $f(V(\smalldp))$.
\end{theorem}
\begin{proof}
 The first equality follows from 
 \begin{align*}
     \Hom(\smalldp,\largedp) =\bigsqcup_{\smalldpprime\in \DP} \{f \in \Hom(\smalldp,\largedp)|\largedp_{f(V(\smalldp))}\cong \smalldpprime\}
 \end{align*}
 where the union is taken over disjoint sets. Furthermore, the sets $\{f \in \Hom(\smalldp,\largedp)|\largedp_{f(V(\smalldp))}\cong \smalldpprime\}$ are non-empty only for finitely many elements in $\DP$. The second equality follows from \Cref{proposition:comb_id}.
\end{proof}

\end{document}